\documentclass[reqno]{amsart}
\usepackage[margin=1in]{geometry}
\usepackage{graphicx}
\usepackage{physics}
\usepackage{braket}
\usepackage{enumitem}
\usepackage{amsthm}
\usepackage{amssymb}
\usepackage{amsmath}
\usepackage{mathtools}
\usepackage{mathrsfs}
\usepackage[dvipsnames]{xcolor}
\usepackage{wasysym}
\usepackage{tikz-cd}
\usepackage[nocompress]{cite}

\usepackage{hyperref}
\hypersetup{
    hyperindex=true,
    bookmarks=true,
    pdfa,
    colorlinks=true,
    breaklinks=true,
    urlcolor=red,
    linkcolor=red,
    citecolor=red,
    bookmarksopen=true,
}

\renewcommand{\ip}[1]{\langle #1 \rangle}
\newcommand{\too}{\longrightarrow}
\newcommand{\mtoo}{\longmapsto}
\newcommand{\mto}{\mapsto}
\newcommand{\C}{\mathbb{C}}
\newcommand{\R}{\mathbb{R}}
\newcommand{\Z}{\mathbb{Z}}

\renewcommand{\S}{\mathcal{S}}
\renewcommand{\H}{\mathcal{H}}
\newcommand{\B}{\mathcal{B}}
\newcommand{\E}{\mathcal{E}}
\renewcommand{\P}{\mathcal{P}}
\newcommand{\GL}{\operatorname{GL}}

\newcommand{\U}{\operatorname{U}}
\newcommand{\G}{\mathcal{G}}

\newcommand{\Aut}{\operatorname{Aut}}

\newcommand{\defn}[1]{\textbf{\textit{#1}}}

\newcommand{\Mp}{\mathrm{Mp}}
\newcommand{\Sp}{\mathrm{Sp}}

\newcommand{\im}{\operatorname{im}}
\newcommand{\HH}{\mathbb{H}}

\newcommand{\e}{\varepsilon}
\newcommand{\vol}{\operatorname{vol}}
\newcommand{\longhook}{\lhook\joinrel\longrightarrow}
\newcommand{\diag}{\operatorname{diag}}
\newcommand{\Heis}{\mathrm{Heis}}
\newcommand{\Pauli}{\mathrm{Pauli}}
\newcommand{\Cliff}{\mathrm{Cliff}}
\newcommand{\code}{\mathscr{C}}
\newcommand{\passive}{\mathrm{passive}}

\newcommand{\FDA}{{\mathcal{V}_{\Lambda^\perp}}}
\newcommand{\FDB}{\mathcal{V}_\Lambda}

\numberwithin{equation}{section}

\theoremstyle{plain}
\newtheorem{theorem}{Theorem}[section]
\newtheorem{proposition}[theorem]{Proposition}
\newtheorem{lemma}[theorem]{Lemma}
\newtheorem{corollary}[theorem]{Corollary}

\newtheorem{problem}[theorem]{Problem}
\theoremstyle{definition}
\newtheorem{definition}[theorem]{Definition}
\newtheorem{remark}[theorem]{Remark}
\newtheorem{example}[theorem]{Example}

\title[Complex abelian varieties and quantum error correction]{Complex abelian varieties and quantum error correction:\\ a mathematical framework for GKP codes}
\author{Maxence Mayrand and Baptiste Royer}
\address[Maxence Mayrand]{D\'{e}partement de math\'{e}matiques et Institut Quantique \\ Universit\'{e} de Sherbrooke \\ 2500 Bd de l'Universit\'{e} \\ Sherbrooke, QC, J1K 2R1, Canada}
\email{maxence.mayrand@usherbrooke.ca}
\address[Baptiste Royer]{D\'{e}partement de physique et Institut Quantique\\ Universit\'{e} de Sherbrooke \\ 2500 Bd de l'Universit\'{e} \\ Sherbrooke, QC, J1K 2R1, Canada}
\email{baptiste.royer@usherbrooke.ca}

\begin{document}

\begin{abstract}
We study a class of quantum error-correcting codes through the geometry of complex abelian varieties. These codes, introduced by Gottesman--Kitaev--Preskill, are built from symplectically integral lattices and therefore naturally define polarized complex abelian varieties. We give a precise mathematical formulation of this relationship and extend it to a dictionary between the main structures of GKP code theory and classical objects in the theory of abelian varieties. For instance, under this dictionary, the finite-dimensional code space becomes the space of theta functions $H^0(X, L)$, logical Pauli gates arise from the theta group, passive logical Clifford gates correspond to automorphisms of the polarized abelian variety, and concatenation with stabilizer codes corresponds to isogeny. We also prove several key results that give precise mathematical formulations of statements about these codes that often appear in heuristic form in the physics literature. In particular, we prove that the encoding is asymptotically isometric, that every logical Clifford gate is realized by a Gaussian unitary, and that, for noise of small variance, the failure probability is governed to first order by the shortest nontrivial displacement in the kernel of the polarization isogeny, a systolic invariant of the underlying polarization. This leads naturally to optimization problems on the moduli space of polarized abelian varieties.
\end{abstract}

\maketitle

\section{Introduction}

\subsection{Overview}

Quantum computation studies finite-dimensional Hilbert spaces of the form
\[
\C^2 \otimes \cdots \otimes \C^2
\]
or more generally $\C^{d_1} \otimes \cdots \otimes \C^{d_n}$ with $d_i \ge 2$, whose nonzero vectors represent quantum information and whose unitary operators represent quantum gates. A quantum algorithm consists of a sequence of such gates and measurements applied to a given initial state.

A central challenge is to realize the abstract system $\C^2 \otimes \cdots \otimes \C^2$ and its operations within a physical system in a manner that is robust against background noise. Since noise is fundamentally unavoidable, the problem is not to eliminate it, but to detect and correct it in a manner that does not disturb the abstract system. This is the subject of quantum error correction.

One particularly useful approach to quantum error correction is provided by the construction of Gottesman--Kitaev--Preskill~\cite{GKP}, known as GKP codes. These codes have attracted considerable attention in recent years as a promising approach toward quantum computation in various physical platforms~\cite{matsos2025universal,campagne2020quantum,sivak2023real,deneeveErrorCorrectionLogical2022,lachance-quirionAutonomousQuantumError2024,lemondeHardwareEfficientFaultTolerant2024,bourassaBlueprintScalablePhotonic2021,larsenIntegratedPhotonicSource2025,aghaeeradScalingNetworkingModular2025}.

The first goal of this paper is to give a mathematically rigorous foundation for GKP codes in the language of lattices, Heisenberg groups, and their representations. This refines and places on firm mathematical footing earlier descriptions appearing in the physics literature \cite{HarringtonPreskill2001,royer2022encoding,conrad2022gottesman}. In particular, we prove several key results that give precise mathematical formulations of statements about these codes that often appear only in heuristic form. Altogether, this provides a conceptual foundation for the theory and illuminates several of its features.

The second goal of this paper is to develop the relationship between GKP codes and complex abelian varieties. This relationship was already noted in early work of Harrington--Preskill~\cite{HarringtonPreskill2001} and appears in some recent works on GKP codes \cite{conrad2024lattices,burchards2025fiber,conrad2024fabulous}. These works observe that the lattice underlying a GKP code naturally defines a complex abelian variety, and they explore some consequences of this perspective. Our aim is to build on this connection and formulate it systematically and rigorously from the point of view of complex geometry, not merely as a formal analogy, but as a broader correspondence in which many of the key ingredients of GKP codes acquire natural geometric interpretations.

The resulting picture is that many central notions from quantum error correction become familiar objects from the theory of abelian varieties, while the error-correction aspects of the theory introduce new geometric questions. For instance, automorphisms of a polarized complex abelian variety correspond to logical quantum gates, whereas robustness leads naturally to the study of probability measures on abelian varieties and their interaction with the underlying geometry. More concretely, for Gaussian displacement noise, robustness becomes a function on the moduli space of polarized abelian varieties of fixed type, and its small-noise asymptotics are governed by a natural systolic function on this moduli space.

This geometric reformulation provides a bridge between quantum error correction and complex geometry. On the one hand, it places GKP codes in a well-established mathematical setting, where classical tools from the theory of abelian varieties can be used to study questions of quantum error correction. On the other hand, it gives a new interpretation of complex abelian varieties as geometric models for quantum error-correcting codes, suggesting fresh questions and problems in complex geometry.

Thus the paper has two complementary aims: to make the mathematical structure of GKP codes precise, and to reinterpret this structure through the theory of complex abelian varieties, thereby creating a common language between quantum error correction and complex geometry.

\subsection{Summary of results}

We now give a detailed outline of the main definitions and results of the paper from the point of view of complex geometry, together with some of the problems they suggest.

Let $(X, L)$ be a complex abelian variety $X$ together with an ample line bundle $L \to X$. Let $(d_1, \ldots, d_n)$ be the type of $(X, L)$ and set
\[
D \coloneqq \diag(d_1, \ldots, d_n).
\]
The space $H^0(X, L)$ of holomorphic sections of $L$ has dimension $d_1\cdots d_n$, the same as the standard qudit Hilbert space
\begin{equation}\label{u3l7btgj}
\C^{d_1} \otimes \cdots \otimes \C^{d_n}
\end{equation}
from quantum information theory.
These two spaces can in fact be naturally identified by a choice of \emph{theta structure} on $(X, L)$. Indeed, recall that the theta group of $L$ is a Heisenberg group acting irreducibly on $H^0(X, L)$, and a theta structure is an isomorphism between this theta group and the standard finite Heisenberg group of type $D$. By the Stone--von Neumann theorem, such a theta structure determines an identification
\[
\C^{d_1} \otimes \cdots \otimes \C^{d_n} \cong H^0(X, L),
\]
unique up to a global phase.

Now, by realizing $H^0(X, L)$ as the space of canonical theta functions and using the Bargmann transform to identify them with tempered distributions on $\R^n$, we get a linear embedding
\begin{equation}\label{l4rwp0gw}
\mathrm{Enc} : \C^{d_1} \otimes \cdots \otimes \C^{d_n} \cong H^0(X, L) \longhook \mathcal{S}'(\R^n),
\end{equation}
called an \emph{encoding}.
The target $\mathcal{S}'(\R^n)$ represents an ideal physical system, whose states include square-integrable functions, i.e.\ wave functions, as well as idealized states such as Dirac distributions. From a physics point of view, $\mathcal{S}'(\R^n)$ models a harmonic oscillator with $n$ modes. The domain and target of such an encoding are called the \emph{logical} and \emph{physical} systems, respectively, denoted
\begin{equation}\label{w2x7pkag}
\mathscr{H}_{\mathrm{logical}} \coloneqq \C^{d_1} \otimes \cdots \otimes \C^{d_n}
\end{equation}
and
\begin{equation}\label{pqk64lha}
\mathscr{H}_{\mathrm{physical}} \coloneqq \mathcal{S}'(\R^n).
\end{equation}
In this way, every polarized complex abelian variety of type $(d_1, \ldots, d_n)$ together with a theta structure determines an encoding of the logical system \eqref{w2x7pkag} into the ideal physical system \eqref{pqk64lha}. This is the geometric incarnation of a GKP code. In Section \ref{ltg2p7xl}, we formulate a precise mathematical definition of GKP codes in the more standard language of lattices and translation operators, and prove that this definition is equivalent to the construction above. Thus GKP codes are in one-to-one correspondence with polarized complex abelian varieties endowed with a theta structure.

To obtain an actual quantum error correction scheme, we need three more properties, which can be informally stated as follows. More precise statements will appear in the body of the paper.
\begin{enumerate}[label={\textup{(\arabic*)}}]
    \item \label{xmymvki3} The standard hermitian inner product on $\mathscr{H}_{\mathrm{logical}}$ can be recovered from the geometry of $\mathscr{H}_{\mathrm{physical}}$.
    \item \label{afegowxw} Many logical gates of interest, i.e.\ unitary operators on $\mathscr{H}_{\mathrm{logical}}$, can be implemented by physically realistic unitary operators on $\mathscr{H}_{\mathrm{physical}}$.
    \item \label{gacgo30t} If a sufficiently small displacement error sends a state $\psi \in \im \mathrm{Enc}$ outside the image of the encoding, then the error can be detected by measurement and corrected by a unitary operator.
\end{enumerate}

One of the main goals of the paper is to give precise mathematical statements and proofs for these three properties. We outline them in turn.

\subsubsection{Asymptotic isometry of the encoding}

The first property concerns the inner product. In standard quantum error correction, one usually requires the embedding $\mathscr{H}_{\mathrm{logical}} \hookrightarrow \mathscr{H}_{\mathrm{physical}}$ to be an isometry. For GKP codes, this requirement cannot hold directly in the usual sense, since $\mathscr{H}_{\mathrm{physical}} = \mathcal{S}'(\R^n)$ is not a Hilbert space. Indeed, the encoded states are not, in general, square-integrable, so the usual $L^2$ inner product cannot be applied to them directly. Nevertheless, the ideal encoded states admit natural square-integrable approximations.

\begin{theorem}[Theorem \ref{hr1j1x4z}]
There exists a canonical family of encodings
\[
\mathrm{Enc}_\beta : \C^{d_1} \otimes \cdots \otimes \C^{d_n} \too L^2(\R^n),
\qquad \beta > 0,
\]
which converge to the ideal encoding \eqref{l4rwp0gw} as $\beta \to 0$ and are asymptotically isometric up to a conformal factor. In particular, there exists $a(\beta) > 0$ such that
\[
\ip{\varphi, \psi} =
\lim_{\beta \to 0}
a(\beta)
\ip{\mathrm{Enc}_\beta(\varphi), \mathrm{Enc}_\beta(\psi)}_{L^2(\R^n)}
\]
for all $\varphi, \psi \in \C^{d_1} \otimes \cdots \otimes \C^{d_n}$.
\end{theorem}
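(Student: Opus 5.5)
Fix a lattice model of the code: a symplectically integral lattice $\Lambda\subset\R^{2n}$ with symplectic dual $\Lambda^\perp\supset\Lambda$ and $\Lambda^\perp/\Lambda\cong(\Z^n/D\Z^n)^2$. Through the theta structure, each basis vector $e_k$ of $\mathscr{H}_{\mathrm{logical}}$ is sent to an ideal codeword of the form
\[
\mathrm{Enc}(e_k)=\sum_{\lambda\in\Lambda} c_{k,\lambda}\, T(\lambda+\mu_k)\,\delta_0 ,
\]
where the $\mu_k\in\Lambda^\perp$ are coset representatives, $T$ denotes the Heisenberg displacement operators, and the $c_{k,\lambda}$ are unimodular phases. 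The canonical regularization is $\mathrm{Enc}_\beta\coloneqq e^{-\beta N}\circ\mathrm{Enc}$, where $N$ is the number operator (harmonic oscillator Hamiltonian). Equivalently, via the Bargmann transform, one multiplies the canonical theta function by the Gaussian damping coming from $e^{-\beta N}$. Since $e^{-\beta N}$ maps $\mathcal{S}'(\R^n)$ into $\mathcal{S}(\R^n)\subset L^2$, each $\mathrm{Enc}_\beta$ lands in $L^2$. Moreover $\mathrm{Enc}_\beta\to\mathrm{Enc}$ in $\mathcal{S}'$ as $\beta\to 0$, because $e^{-\beta N}\to I$ strongly on $\mathcal{S}$, and hence weakly-$*$ on $\mathcal{S}'$ by duality. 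This settles the convergence claim. By linearity, it remains to compute the Gram matrix $G_\beta(k,l)=\ip{\mathrm{Enc}_\beta e_k,\mathrm{Enc}_\beta e_l}$.

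To do so, write
\[
G_\beta(k,l)=\ip{\mathrm{Enc}\, e_k,\ e^{-2\beta N}\,\mathrm{Enc}\, e_l},
\]
and expand both codewords as sums over lattice displacements. This gives a double lattice sum of matrix elements $\ip{\delta_x, e^{-2\beta N}\delta_y}$. These are explicit through Mehler's kernel: a Gaussian in $(x,y)$ of width of order $\beta^{-1/2}$ along the diagonal and of order $\beta^{1/2}$ transverse to it. A cleaner route is the characteristic-function form: $e^{-2\beta N}$ is a Gaussian superposition of displacements,
\[
e^{-2\beta N}=c(\beta)\int_{\R^{2n}} e^{-|\xi|^2/\sigma(\beta)}\,T(\xi)\,d\xi .
\]
The stabilizer property $T(\lambda)\mathrm{Enc}(\psi)=\mathrm{Enc}(\psi)$ for $\lambda\in\Lambda$ (up to the prescribed phases) then collapses the computation to a sum over $\Lambda^\perp$ of Gaussian weights times logical Pauli matrix elements $\ip{e_k, P(\nu)e_l}$. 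The term $\nu\in\Lambda$ contributes $\delta_{kl}$ times $\theta_\beta\coloneqq\sum_{\lambda\in\Lambda}(\text{Gaussian weight at }\lambda)$. Every $\nu\in\Lambda^\perp\setminus\Lambda$ acts on the logical space as a nontrivial Pauli operator, which is generally off-diagonal or carries a nontrivial character.

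The key step is to show that, after multiplying by $a(\beta)\coloneqq\theta_\beta^{-1}$, the contribution of each nontrivial coset of $\Lambda^\perp/\Lambda$ tends to $0$. I expect this to be the main obstacle, because the Gaussian widths degenerate as $\beta\to0$: every coset sum diverges at the same leading rate, of order $\beta^{-n}$ times the covolume. So the cancellation must come from the phases rather than from decay. The plan is to apply Poisson summation to each coset sum $\sum_{\lambda\in\Lambda}w_\beta(\nu+\lambda)\chi(\lambda)$, where $\chi$ is the phase character. If $\chi$ is trivial, the leading term is the same as for $\theta_\beta$, but it is multiplied by the trace-zero Pauli structure. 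Summing over the $d_1\cdots d_n$ coset representatives then reproduces $\delta_{kl}$ times a common factor, using the orthogonality $\frac{1}{d}\sum_\nu P(\nu)\,\cdot\,P(\nu)^*=\mathrm{tr}(\cdot)/d$. If $\chi$ is nontrivial, Poisson summation shifts the Gaussian peak to a nonzero dual-lattice point, and that contribution is exponentially small relative to $\theta_\beta$. I would organize the argument so that a single Poisson summation on $\Lambda^\perp$ handles everything at once. One then checks that the surviving dual-lattice points of $(\Lambda^{\perp})^{*}$ near $0$ contribute exactly the identity on $\mathscr{H}_{\mathrm{logical}}$, with all remaining terms $O(e^{-c/\beta})$. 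This yields $a(\beta)G_\beta\to I$, which is the claimed asymptotic isometry.
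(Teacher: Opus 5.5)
Your regularization $e^{-\beta N}$, the $L^2$ claim and the weak-$*$ convergence are fine. The key step, however, rests on a wrong scaling, and the cancellation mechanism you propose for it does not exist.

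\textbf{The scaling.} The weight in the displacement expansion of $e^{-2\beta N}$ is an unnormalized thermal characteristic function. With the paper's normalization of $T_\xi$,
\[
\operatorname{Tr}(e^{-2\beta N}T_\xi)=(1-e^{-2\beta})^{-n}e^{-\frac{\pi}{2}|\xi|^2\coth\beta}.
\]
This Gaussian is \emph{narrow}, of width of order $\sqrt{\beta}$. Only the common prefactor, of order $(2\beta)^{-n}$, diverges. So the coset sums do not diverge at a common rate. Each $\mu\in\Lambda^\perp\setminus\{0\}$, including $\mu\in\Lambda\setminus\{0\}$, is suppressed relative to $\mu=0$ by a factor at most $e^{-\pi|\mu|^2/(2\beta)}$. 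Decay alone does the work, and no phase cancellation is needed.

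\textbf{The cancellation mechanism.} What you describe could not work even in principle. The Pauli operators indexed by $K=\Lambda^\perp/\Lambda$ are linearly independent. Hence a limit of the form $\sum_{[\nu]}c_\nu P(\nu)$ is a multiple of the identity only if every $c_\nu$ with $\nu\notin\Lambda$ vanishes separately. The orthogonality relation you cite is the twirl $A\mapsto\frac{1}{|K|}\sum_{x\in K}P_xAP_x^{*}=\frac{\operatorname{tr}(A)}{d_1\cdots d_n}I$. It involves conjugation and says nothing about a linear combination of Pauli operators. In your picture, a nontrivial coset with trivial phase character would survive, and the limit would not be the identity. Likewise, a Poisson summation on $\Lambda^\perp$ would turn the correct narrow weights into wide ones, the opposite of what you need. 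The trace-zero property does matter, but only to show that the $\mu=0$ coefficient of $|\mathrm{Enc}\,e_l\rangle\langle\mathrm{Enc}\,e_k|$ is proportional to $\operatorname{tr}(|e_l\rangle\langle e_k|)=\delta_{kl}$.

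\textbf{Two further missing steps.}
\begin{enumerate}
\item The ``collapse'' pairs two non-normalizable codewords through $T(\xi)$, which is infinite as written. It must be recast as $G_\beta(k,l)=\operatorname{Tr}(e^{-2\beta N}\,|\mathrm{Enc}\,e_l\rangle\langle\mathrm{Enc}\,e_k|)$. The outer product is then expanded distributionally as $\sum_{\mu\in\Lambda^\perp}c_\mu T_\mu$, namely the code projector (a multiple of $\sum_{\lambda\in\Lambda}\nu(\lambda)^{-1}T_\lambda$) composed with a logical operator. The interchange of trace and lattice sum must be justified. Your ansatz $\sum_{\lambda}c_{k,\lambda}T(\lambda+\mu_k)\delta_0$ is not well defined as written: for instance, elements of $\Lambda$ with zero position component fix $\delta_0$ and are summed infinitely often.
\item The surviving coefficient must be matched to the inner product on $\code_{\Lambda,\nu}$ transported from $\Theta_{\Lambda,\nu}$ via \eqref{cu7nnb56}, for which $\mathrm{Enc}$ is unitary. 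This matching is what fixes the constant $\det(\Lambda)$ in $c(\beta)$.
\end{enumerate}

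\textbf{How the paper avoids this.} After the Bargmann transform, $E_\beta$ is the dilation $f\mapsto f(e^{-\beta}\,\cdot\,)$. The paper changes variables $z\mapsto e^{\beta}z$ and unfolds the Fock integral over translates of a fundamental domain. Since $f\bar g e^{-\pi|z|^2}$ is $\Lambda$-periodic, this leaves the kernel $\sum_{\lambda\in\Lambda}e^{-\pi\alpha|z+\lambda|^2}$ with $\alpha=e^{2\beta}-1$. Poisson summation over $\Lambda$ rewrites it as $\frac{1}{\alpha^n\det(\Lambda)}\sum_{\xi\in\Lambda^*}e^{-\pi|\xi|^2/\alpha}e^{2\pi i\operatorname{Re}(\bar\xi\cdot z)}$. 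The $\xi=0$ term reproduces $\ip{f,g}_\Theta$ exactly. The other terms contribute $O(e^{-a/\beta})\|f\|_\Theta\|g\|_\Theta$ by Cauchy--Schwarz. No Pauli bookkeeping is needed at all.
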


Thus, although the ideal encoding naturally lands in $\mathcal{S}'(\R^n)$, the hermitian inner product on the logical system can be recovered from square-integrable approximations. This establishes Property \ref{xmymvki3}.

\subsubsection{Logical gates and automorphisms}

The second property concerns logical gates, i.e.\ unitary operators on
$\mathscr{H}_{\mathrm{logical}}$. Through the theta structure, we identify
\[
\mathscr{H}_{\mathrm{logical}} \cong H^0(X, L),
\]
so the problem is to understand which unitary operators on $H^0(X, L)$ can be implemented by natural operators on $\mathscr{H}_{\mathrm{physical}}$.

A first important class of logical gates is given by the Pauli and Clifford gates. In the usual qudit formalism, these groups are described using the tensor product decomposition
\[
\C^{d_1} \otimes \cdots \otimes \C^{d_n}
\]
and a choice of standard coordinates. However, this description depends on noncanonical choices. In Section \ref{tj099yyk}, we develop a basis-independent formalism for Pauli and Clifford groups associated with finite symplectic abelian groups. This formalism is well suited to GKP codes and theta groups, and may also be of independent interest. More precisely, starting from a finite symplectic abelian group $(K,\omega)$, one considers its Heisenberg group and the corresponding Stone--von Neumann representation. The Pauli group is then a canonical finite subgroup of this Heisenberg group, and the Clifford group is the normalizer of its image in the unitary group of the representation.

In the present geometric setting, the finite symplectic abelian group is
\[
K(L) = \{x \in X : t_x^*L \cong L\},
\]
where $t_x : X \to X$ are the translations, and its Heisenberg group is the compact theta group $\G_c(L)$. Thus the action of $\G_c(L)$ on $H^0(X, L)$ is the Stone--von Neumann representation associated with $K(L)$. Let $m = d_n$ be the largest elementary divisor of the type $(d_1, \ldots, d_n)$. Using the formalism of Section \ref{tj099yyk}, we show that the standard Pauli group $\Pauli_D$ on $\C^{d_1} \otimes \cdots \otimes \C^{d_n}$ corresponds to the subgroup
\[
\P(L) \coloneqq \{g \in \G_c(L) : g^{2m} = 1\}.
\]
The Clifford group $\Cliff_D$ on $\C^{d_1} \otimes \cdots \otimes \C^{d_n}$ then corresponds to the normalizer of the image of this Pauli group in $\U(H^0(X,L))$.

We next show that Clifford gates can be realized by natural unitary operators on $\mathscr{H}_{\mathrm{physical}}$. The relevant operators come from the extended metaplectic representation
\begin{equation}\label{ajjzgq1y}
\rho : \Mp_{2n} \ltimes \HH_n \too \GL(\mathcal{S}'(\R^n)),
\end{equation}
where $\Mp_{2n}$ is the metaplectic group and $\HH_n$ is the real Heisenberg group. These operators preserve $L^2(\R^n)$ and restrict to unitary operators there. In the physics literature, they are known as \emph{Gaussian unitaries} \cite{weedbrook2012gaussian}.

\begin{theorem}[Theorem \ref{cczuu62j}]
There is a canonical subgroup $N \subset \Mp_{2n} \ltimes \HH_n$ such that $\rho|_N$ preserves the image of the encoding \eqref{l4rwp0gw}. Moreover, the induced representation
\[
N \too \GL(\C^{d_1} \otimes \cdots \otimes \C^{d_n})
\]
is unitary and its image is precisely the group $\Cliff_D$ of Clifford gates.
\end{theorem}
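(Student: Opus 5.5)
The plan is to identify $N$ as the normalizer of the lattice data inside $\Mp_{2n}\ltimes\HH_n$ and to reduce everything to finite symplectic group theory. Let $\Lambda\subset\R^{2n}$ be the symplectically integral lattice underlying the code, and $\Lambda^\perp$ its symplectic dual, so that $K\coloneqq\Lambda^\perp/\Lambda\cong K(L)$. Let $\widetilde\Lambda\subset\HH_n$ be the stabilizer subgroup, i.e.\ the lift of $\Lambda$ on which the encoding is defined, which acts trivially on $\im\mathrm{Enc}$. Let $\widetilde{\Lambda^\perp}$ be its centralizer, which acts on $\im\mathrm{Enc}$ through the Heisenberg group of $K$. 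I would set
\[
N \coloneqq \{ g \in \Mp_{2n}\ltimes \HH_n : g\,\widetilde\Lambda\, g^{-1} = \widetilde\Lambda\}.
\]
The first step is routine. The image of the encoding is exactly the space of $\widetilde\Lambda$-invariant distributions; this is the lattice description of Section~\ref{ltg2p7xl}. Since $\rho(g)$ conjugates $\rho(\widetilde\Lambda)$ to itself, it preserves this space. Unitarity of the induced action follows as in the asymptotic isometry theorem (Theorem~\ref{hr1j1x4z}). Alternatively, $N$ normalizes the image of $\widetilde{\Lambda^\perp}$, which acts irreducibly on $\im\mathrm{Enc}$. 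By Schur's lemma, each $\rho(g)$ then preserves the invariant inner product up to a scalar, and the scalar is forced to be $1$ because $N$ is generated by elements of finite order modulo the centre together with Heisenberg elements.

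The second step shows the image lies in $\Cliff_D$. Elements of $N$ normalize $\widetilde{\Lambda^\perp}$ as well, since that group is the centralizer of $\widetilde\Lambda$. So conjugation by $g\in N$ permutes the logical Heisenberg operators, and hence preserves the Pauli group $\P(L)$ defined by $g^{2m}=1$, an intrinsic condition. By the basis-free definition of Section~\ref{tj099yyk}, the image therefore lies in the Clifford group.

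The third step is surjectivity, which I expect to be the main obstacle. Take $C\in\Cliff_D$. Conjugation by $C$ induces an automorphism of the Pauli group, and hence a symplectic automorphism $\bar\sigma$ of $K$ together with a compatible function on $K$ recording phases. Modulo Paulis, the Clifford group is a central extension of the group of such pairs. Two things are then needed.

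First, $\bar\sigma$ must lift to some $\sigma\in\Sp_{2n}(\R)$ preserving $\Lambda$ and $\Lambda^\perp$. I would choose a symplectic basis adapted to $D$, which identifies the relevant group with $\Gamma_D=\{\sigma\in\Sp_{2n}(\Z)\text{-type matrices preserving }\Lambda_D\}$. I would then prove that the reduction map $\Gamma_D\to\Sp(K)$ is surjective. This is a strong-approximation statement: $\Sp_{2n}(\Z)\to\Sp_{2n}(\Z/m)$ is surjective, and I would adapt it to the nonhomogeneous type $D$ by working prime by prime with elementary symplectic transvections generating $\Sp(K_p)$.

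Second, the phase data is handled by composing with Heisenberg elements in $\HH_n$ whose projections lie in $\tfrac12\Lambda^\perp$. These realize all logical Paulis, and translating by such half-lattice elements adjusts the quadratic-form ambiguity caused by the metaplectic lift. The factor $2m$ in the Pauli group is designed to absorb exactly this ambiguity.

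Finally, Schur's lemma shows $\rho(g)|_{\im\mathrm{Enc}}$ agrees with $C$ up to a scalar. The central $\U(1)\subset\HH_n$ lies in $N$, so it supplies that phase, and the image is exactly $\Cliff_D$.
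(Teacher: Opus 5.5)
Your architecture is the paper's: $N$ is the normalizer of $\Lambda_\nu$, Schur's lemma gives a scaling homomorphism that must be trivial, and surjectivity reduces to lifting $\Sp(\Lambda^\perp/\Lambda)$ to $\Sp(\Lambda)$ together with the exact sequence of Proposition~\ref{xor7ubv5}. However, both nontrivial inputs are misstated or only sketched, and one step is wrong.

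\emph{Unitarity.} The appeal to Theorem~\ref{hr1j1x4z} does not work. Elements of $N$ involve displacements and squeezing, so $\rho(g)$ does not commute with $E_\beta$. You would need a new asymptotic formula for the conjugated envelopes $\rho(g)E_\beta\rho(g)^{-1}$. The Schur route is the right one, but your claim that $N$ is generated by elements of finite order, modulo central and Heisenberg elements, is unsupported. It amounts to saying that $\Sp(\Lambda)$ is generated by torsion. For types other than $(d,\ldots,d)$ this group is not $\Sp_{2n}(\Z)$, so you cannot simply invoke such a statement. What you actually need is only that every homomorphism $\Sp(\Lambda)\to\R_{>0}$ is trivial. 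The paper first factors the scaling character through $\mathcal{N}_{\Lambda,\nu}/\Heis_{\Lambda,\nu}\cong\Mp(\Lambda)$ (Proposition~\ref{utnj2ijc}) and then through $\Sp(\Lambda)$. It then uses that $\Sp(\Lambda)$ has finite abelianization: for $n=1$, $\Sp(\Lambda)\cong\mathrm{SL}_2(\Z)$ with abelianization $\Z/12\Z$, and for $n\ge2$ it cites \cite{margulis1991discrete}.

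\emph{Surjectivity.} The real content is the surjectivity of $\Sp(\Lambda)\to\Sp(\Lambda^\perp/\Lambda)$, and your prime-by-prime sketch does not establish it. When the $p$-part $K_p$ is not homogeneous, $\Sp(K_p)$ is not of the form $\Sp_{2k}(\Z/p^a)$, and you do not say which transvections generate it or why they lift. Gluing the primes also needs lifts that act trivially on the other $p$-parts. Strong approximation reduces everything to the local surjectivity $\Sp(\Lambda\otimes\Z_p)\to\Sp(K_p)$, which is exactly the hard part. The paper takes it from Brasch's lifting theorem \cite[Theorem 2]{brasch1993lifting}.

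\emph{The phase step.} This step is wrong as written. A pure Heisenberg element $(v,\alpha)$ normalizes $\Lambda_\nu$ only if $v\in\Lambda^\perp$. So translations by vectors in $\tfrac12\Lambda^\perp\setminus\Lambda^\perp$ are not in $N$ and do not realize logical Paulis. The exponent $2m$ in the Pauli group also plays no role here. The correct argument runs as follows.
\begin{itemize}
\item Given $C\in\Cliff_{\Lambda,\nu}$ with image $\bar\sigma\in\Sp(K)$, lift $\bar\sigma$ to $\sigma\in\Sp(\Lambda)$ and pick $M\in\Mp(\Lambda)$ over $\sigma$.
\item The shift $u_M$ of \eqref{g7nmwq61} makes $g=(M,u_M,1)$ an element of $N$. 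This shift is presumably what your half-lattice vectors were meant to be, since $u_M\in\tfrac12\Lambda^\perp$ when $\nu$ is $\pm1$-valued.
\item Then $C\rho(g)^{-1}$ maps to the identity of $\Sp(K)$, so it lies in $\Heis_{\Lambda,\nu}$ by Proposition~\ref{xor7ubv5}.
\item Finally, $\Heis_{\Lambda,\nu}$ is realized by $\Lambda^\perp\times\U(1)\subset N$.
\end{itemize}
This is the paper's diagram chase, and it makes your final Schur step unnecessary.
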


Thus every Clifford gate can be implemented by an operator coming from the extended metaplectic representation. This establishes Property \ref{afegowxw} in a strong form.

Among these operators, an especially important role is played by the so-called \emph{passive} ones, namely those arising from the subgroup $\U(n) \ltimes \HH_n$, where $\U(n) = \Sp_{2n} \cap \mathrm{O}(2n)$ embeds in $\Mp_{2n}$ in the usual way.\footnote{\label{f3rsimhu}This differs slightly from standard terminology in the physics literature, where only those with trivial $\HH_n$ component are called passive. We adopt the broader convention here because it is more convenient for the purposes of this paper.} These operators are much easier to realize experimentally, so it is natural to ask which Clifford gates they induce. Since Pauli gates and phases arise from this subgroup, we want the quotient group
\[
\Cliff_D^\passive / (\U(1)\Pauli_D)
\]
to be as large as possible.

This question has a natural geometric answer. Let $\Aut(X, L)$ be the group of biholomorphisms $f : X \to X$ fixing the origin such that $c_1(f^*L) = c_1(L)$. Then $\Aut(X, L)$ preserves $K(L)$ and we denote the pointwise stabilizer by
\[
\Aut(X, L)_{K(L)} \coloneqq \{ f \in \Aut(X, L) : f(x) = x \text{ for all } x \in K(L)\}.
\]

\begin{theorem}[Theorem \ref{fi2al08e}]
There is a short exact sequence
\[
1 \too \Aut(X, L)_{K(L)} \too \Aut(X, L) \too \Cliff_D^\passive/(\U(1)\Pauli_D) \too 1.
\]
\end{theorem}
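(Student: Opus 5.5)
We plan to work in the lattice model. Write $X = \C^n/\Lambda$ with $\Lambda \subset \R^{2n} \cong \C^n$ a lattice that is symplectically integral for the standard symplectic form $\omega$ and has elementary divisors $D$, so that $c_1(L)$ is represented by $\omega|_\Lambda$. Then $K(L) = \Lambda^\perp/\Lambda$, where $\Lambda^\perp = \{x : \omega(x,\Lambda)\subset \Z\}$ is the symplectic dual. An element $f \in \Aut(X,L)$ lifts uniquely to a complex-linear map $A$ of $\C^n$ with $A\Lambda = \Lambda$ and $A^*\omega = \omega$. Since $A$ is complex linear and symplectic it preserves the Hermitian form, so $A \in \U(n) = \Sp_{2n}\cap \mathrm{O}(2n)$. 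Conversely, any $A \in \U(n)$ with $A\Lambda=\Lambda$ gives an element of $\Aut(X,L)$. This yields $\Aut(X,L) \cong \{A \in \U(n) : A\Lambda = \Lambda\}$.

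Next we describe $\Cliff_D^\passive$. We take it to be the image in $\Cliff_D$ of $N \cap (\widetilde{\U(n)} \ltimes \HH_n)$, where $N$ is the group of Theorem \ref{cczuu62j} and $\widetilde{\U(n)}$ is the preimage of $\U(n)$ in $\Mp_{2n}$. From the proof of Theorem \ref{cczuu62j}, $N$ should consist of pairs $(\tilde S, h)$ whose projection $S \in \Sp_{2n}$ preserves $\Lambda$, with $h$ chosen so that the stabilizer translations $\rho(\Lambda)$ are normalized with the correct phases. We will first show that any $A \in \U(n)$ with $A\Lambda=\Lambda$ admits such a lift $(\tilde A, h)$: the phase cocycle of $A$ acting on the lifted stabilizers is a character of $\Lambda$ and can be absorbed by a displacement $h \in \HH_n$. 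This defines a map
\[
\Phi:\Aut(X,L) \too \Cliff_D^\passive/(\U(1)\Pauli_D), \qquad A \mtoo [\rho(\tilde A,h)|_{\im \mathrm{Enc}}].
\]
We then check that $\Phi$ is well defined. The two possible metaplectic lifts differ by a sign, which is absorbed in $\U(1)$. Two admissible choices of $h$ differ by an element normalizing the stabilizers, namely a displacement in $\Lambda^\perp$ up to phase, and such displacements act as logical Paulis. Hence $\Phi$ is well defined, and it is a homomorphism because $\rho$ is one.

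Surjectivity is then formal. Every passive Clifford comes from some $(\tilde S,h)\in N$ with $S\in\U(n)$ and $S\Lambda = \Lambda$, and this $S$ is an element of $\Aut(X,L)$ mapping to the given class.

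For the kernel, we use the fact established in Section \ref{tj099yyk} that $\Cliff_D/(\U(1)\Pauli_D) \cong \Sp(K(L))$, the symplectic automorphism group of the finite symplectic group $K(L)$. The isomorphism sends a Clifford to its conjugation action on Paulis modulo phases. Under this isomorphism, $\Phi(A)$ is the map that $A$ induces on $\Lambda^\perp/\Lambda = K(L)$, because $\rho(\tilde A,h)$ conjugates a displacement by $v\in\Lambda^\perp$ to a displacement by $Av$ up to phase. Therefore $\Phi(A)$ is trivial exactly when $A$ acts as the identity on $K(L)$, which is the condition defining $\Aut(X,L)_{K(L)}$. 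Injectivity of the first arrow is clear.

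We expect the main obstacle to be the existence of an admissible displacement $h$ for each $A$. This amounts to showing that the quadratic phase function $q$ on $\Lambda$, whose values are $\pm 1$ and which is determined by the stabilizer sign conventions, satisfies $q\circ A / q = \chi$ for some character $\chi$ of $\Lambda$ realized by translation. To handle it, we will show that $q(A\lambda)/q(\lambda)$ is additive modulo sign, using bilinearity of $\omega$ and the relation $\omega(A\cdot,A\cdot)=\omega$. We will then represent the resulting character as $\lambda \mapsto e^{2\pi i\omega(h,\lambda)}$ for some $h \in \R^{2n}$, which is possible by nondegeneracy of $\omega$.
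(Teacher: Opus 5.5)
Your proposal is correct and is essentially the paper's argument. The paper packages your lift $A \mapsto (\tilde A, h)$ as the surjection $\widetilde{\Aut}(X,L) \cong \mathcal{N}_{\Lambda,\nu}^{\passive} \to \Aut(X,L)$ with kernel $\G_c(L) \cong \Heis_{\Lambda,\nu}$ (Theorem~\ref{s14jrrx8}), and compares it with $1 \to \Heis_{\Lambda,\nu} \to \Cliff_{\Lambda,\nu}^{\passive} \to \Sp(K)^{\passive} \to 1$. You instead work directly with $\U(\Lambda)$ and the normalizer of Proposition~\ref{utnj2ijc}, and you make explicit what the paper leaves implicit, namely that the induced map on $K(L)$ is $[v] \mapsto [Av]$.

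The step you flag as the main obstacle is only the one-line observation preceding \eqref{g7nmwq61}. Since $E(A\lambda, A\mu) = E(\lambda,\mu)$, the ratio $\nu(A\lambda)/\nu(\lambda)$ is exactly a character of $\Lambda$, not merely a character up to sign. This holds for an arbitrary $\U(1)$-valued semicharacter, so no $\pm 1$ assumption is needed, and you can take $h = u_A$.
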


Equivalently, $\Cliff_D^\passive/(\U(1)\Pauli_D)$ is isomorphic to the image of the natural map $\Aut(X, L) \to \Sp(K(L))$, where $\Sp(K(L))$ is the group of automorphisms of $K(L)$ preserving the commutator map $e^L$.

Thus, for a fixed type, finding GKP codes with many non-Pauli passive Clifford
gates amounts to finding polarized complex abelian varieties whose automorphism
groups have large image in $\Sp(K(L))$.

\begin{problem}\label{pam4y8n6}
For a given type $D$, find a polarized complex abelian variety $(X, L)$ of this type such that the image of $\Aut(X, L)$ in $\Sp(K(L))$ is as large as possible.
\end{problem}

Such a polarized complex abelian variety produces a GKP code with many logical gates that are easy to implement physically. The problem of finding polarized abelian varieties with large automorphism groups has been well studied; see, for example, \cite{gonzalez2009automorphisms} and the references therein. Although one must also understand the image of these automorphism groups in $\Sp(K(L))$, complex abelian varieties of CM type provide a potentially useful source of examples.

Via the Torelli theorem, another natural source of examples is provided by Riemann surfaces with large automorphism groups (Corollary \ref{zdzp6h79}). Their Jacobians are principally polarized complex abelian varieties of type $(1, \ldots, 1)$, which can be rescaled to polarization type $(d, \ldots, d)$. This perspective leads naturally to the study of GKP codes associated with Hurwitz curves, that is, Riemann surfaces whose automorphism groups attain the Hurwitz bound $84(g-1)$, where $g$ is the genus. In particular, the Klein quartic emerges as a promising candidate, yielding, for instance, a GKP code for three qubits $\C^2 \otimes \C^2 \otimes \C^2$ with $168$ passive Clifford gates, modulo Pauli gates and phases; see Proposition \ref{dhnyif6v}. We will analyze the physical implications of this construction in future work.

\subsubsection{Decoding, robustness, and systoles}\label{fci02gnv}

We now discuss Property \ref{gacgo30t}, i.e.\ error correction. This is where a genuinely new ingredient enters the geometry of abelian varieties: a probability measure $\rho$ on $X$. From the point of view of quantum error correction, this measure describes the distribution of possible errors, and robustness becomes a question about how the geometry of $(X,L)$ interacts with $\rho$.

To define this interaction, we first explain how an individual error appears in the geometric picture. A displacement error is represented geometrically by a point $x \in X$. If a code state is represented by a section
\[
\psi \in H^0(X, L),
\]
then the error $x$ changes it into a section
\begin{equation}\label{tgdyegy2}
t_x^*\psi \in H^0(X, t_x^*L)
\end{equation}
of the translated line bundle.
Thus the error does not merely move the section; it also moves the line bundle. The family of all such possible spaces is organized by the vector bundle
\begin{equation}\label{8ae63z8g}
\mathscr{M} : \bigsqcup_{y \in \widehat{X}} H^0(X, y \otimes L) \too \widehat{X},
\end{equation}
where $\widehat{X} = \mathrm{Pic}^0(X)$ is the dual abelian variety and we identify a point $y \in \widehat{X}$ with the corresponding degree-zero line bundle.

One would like to detect and correct such an error. However, in the physical setting, the error $x$ itself cannot be directly observed. Instead, the detectable information is the degree-zero line bundle by which $L$ has been translated. In other words, one could make a measurement on a state $\phi$ in the total space of \eqref{8ae63z8g} and recover the base point $\mathscr{M}(\phi) \in \widehat{X}$, while leaving the state $\phi$ unchanged. In practice, this information is obtained by measuring certain translation operators on $\mathcal{S}'(\R^n)$; see Section \ref{ltg2p7xl}. More precisely, let
\[
\phi_L : X \too \widehat{X}
\]
be the polarization isogeny. If the error is $x$, then the detectable datum is
\[
\phi_L(x) \in \widehat{X}.
\]
In the language of quantum error correction, this datum is called the \emph{syndrome}. Thus the syndrome does not determine $x$ itself, but only the finite fibre $\phi_L^{-1}(\phi_L(x))$ containing it. The goal of decoding is then to choose a representative $z \in \phi_L^{-1}(\phi_L(x))$ and apply the inverse displacement to $t_x^*\psi$ in an attempt to recover $\psi$.

A \emph{decoder} is therefore a measurable section
\[
\mathcal{D} : \widehat{X} \too X
\]
of $\phi_L$. Given a measured syndrome $y \in \widehat{X}$, the decoder chooses the representative $\mathcal{D}(y) \in \phi_L^{-1}(y)$ and applies the inverse displacement. The decoder succeeds on an error $x \in X$ precisely when
\[
\mathcal{D}(\phi_L(x)) = x,
\]
or equivalently when $x \in \im \mathcal{D}$. Hence,
\[
\text{(probability of success using the decoder $\mathcal{D}$)}
=
\rho(\im \mathcal{D}).
\]
The \emph{robustness} of the code is the largest possible success probability,
\[
\mathscr{R}_\rho(X, L)
=
\sup_{\mathcal{D}} \rho(\im \mathcal{D}),
\]
where the supremum is taken over all measurable sections of $\phi_L$. Thus robustness is a number in $[0,1]$ that measures how well the geometry of $(X,L)$ is adapted to the probability measure $\rho$. For example, if $\rho$ is the Haar measure, then $\mathscr{R}_\rho(X, L) = 1/|K(L)|$ (Example \ref{tenixxix}). In general, we have $\mathscr{R}_\rho(X, L) \ge 1/|K(L)|$ (Proposition \ref{3d9mrs7s}). The robustness $\mathscr{R}_{\rho}(X, L)$ depends only on $X$ and the polarization $c_1(L)$, but not on the specific choice of ample line bundle $L$. In the cases considered in this paper, this supremum is attained by an explicit decoder; see Proposition \ref{rq2fhiu0}.

One important case for quantum error correction is Gaussian displacement noise~\cite{GKP,noh2018quantum,hastrup2021analysis}. Let $\pi : V \to X$ be the universal cover of $X$, equipped with the hermitian metric induced by the polarization. For each $\sigma>0$, let $\rho_\sigma$ be the pushforward to $X$ of the centered Gaussian measure of variance $\sigma^2$ on $V$ with respect to this metric. Since this measure is defined only in terms of the polarization metric, it is intrinsic to the polarized abelian variety. Therefore, for each polarization type $D$ and each $\sigma > 0$, robustness defines a function
\begin{equation}\label{jcaskgly}
\mathscr{R}_{\sigma} : \mathcal{A}_D \too [0, 1],
\end{equation}
on the moduli space $\mathcal{A}_D$ of polarized complex abelian varieties of type $D$.

This leads to the following natural optimization problem.

\begin{problem}\label{86b8npxy}
For each polarization type $D$ and each $\sigma>0$, study the robustness function \eqref{jcaskgly}. In particular, determine its supremum and describe the polarized abelian varieties, if any, for which this supremum is attained.
\end{problem}

More generally, one may consider other intrinsic noise models on polarized abelian varieties, provided the corresponding probability measures are defined in a way that is compatible with isomorphisms of polarized abelian varieties.

For the Gaussian measure $\rho_\sigma$, one is especially interested in the small-variance regime, which represents small displacement errors. In this regime, the failure probability is controlled by a simple geometric invariant of the polarized abelian variety. Let $\ell_{X,L}$ be the length of the shortest geodesic segment joining the origin of $X$ to a nonzero point of $K(L)$ and let $N_{X,L}$ be the number of points of $K(L)$ at this minimal distance.

\begin{theorem}[Theorem \ref{clp62a30}]
Let $(X, L)$ be a polarized complex abelian variety. Then
\[
1 - \mathscr{R}_{\sigma}(X, L) \sim \frac{2N_{X,L} \sigma}{\ell_{X,L}\sqrt{2\pi}} \exp(-\frac{\ell_{X,L}^2}{8\sigma^2}) \quad \text{as } \sigma \to 0.
\]
\end{theorem}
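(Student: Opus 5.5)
The plan is to lift everything to the universal cover $\pi : V \to X$ and reduce the statement to a Gaussian tail estimate for the Voronoi cell of a lattice.

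\textbf{Setup.} Write $X = V/\Lambda$ and let $\Lambda' \coloneqq \pi^{-1}(K(L)) \supset \Lambda$, so that $K(L) = \Lambda'/\Lambda$ and the fibres of $\phi_L$ are the $K(L)$-orbits. For a measurable section $\mathcal{D}$ of $\phi_L$, the set $\im\mathcal{D}$ is a fundamental domain for the translation action of $K(L)$ on $X$. Its preimage in $V$ is therefore a $\Lambda$-periodic set $\Omega$ with the following property: the translates $\Omega + \mu$, for $\mu$ running over coset representatives of $\Lambda'/\Lambda$, tile $V$. Writing $\gamma_\sigma$ for the centred Gaussian measure on $V$, we get
\[
\rho_\sigma(\im\mathcal{D}) = \gamma_\sigma(\Omega),
\qquad\text{so}\qquad
1-\rho_\sigma(\im\mathcal{D}) = \gamma_\sigma\Bigl(\textstyle\bigcup_{\mu\in\Lambda'\setminus\Lambda}(\Omega_0+\mu)\Bigr),
\]
where $\Omega_0$ is a fundamental domain for $\Lambda'$ inside $\Omega$. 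By definition, $\ell_{X,L} = \min\{|\mu| : \mu\in\Lambda'\setminus\Lambda\}$. I will interpret $N_{X,L}$ as the number of such minimal vectors $\mu$, equivalently the number of minimal geodesic segments; I should check that this agrees with the paper's count of points of $K(L)$.

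\textbf{Upper bound on failure.} Take the Voronoi decoder, with $\Omega_0 = \mathrm{Vor}_{\Lambda'}(0)$ and $\Omega = \Lambda + \mathrm{Vor}_{\Lambda'}(0)$. Since $\mathscr{R}_\sigma \ge \rho_\sigma(\im\mathcal{D}_{\mathrm{Vor}})$, the failure probability $1-\mathscr{R}_\sigma$ is at most the Gaussian mass of the Voronoi cells of $\Lambda'$ centred at points of $\Lambda'\setminus\Lambda$.

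For a minimal vector $\mu$, the cell $\mathrm{Vor}(\mu)$ lies in the half-space $\{\langle v,\mu/|\mu|\rangle \ge \ell/2\}$. Conversely, it contains the portion of that half-space within distance $O(\sigma^{1-\epsilon})$ of the foot point $\mu/2$, away from the intersections with the other bisecting hyperplanes. A one-dimensional Gaussian tail computation (Mills ratio) gives each such cell mass
\[
\frac{2\sigma}{\ell\sqrt{2\pi}}\,e^{-\ell^2/8\sigma^2}\,(1+o(1)).
\]
Distinct minimal vectors overlap only in exponentially smaller regions, where two constraints bind and the exponent is strictly larger than $\ell^2/8\sigma^2$. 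Non-minimal $\mu$ contribute at most $e^{-\ell_2^2/8\sigma^2}$ with $\ell_2>\ell$. The sum over the infinitely many lattice vectors is controlled by a standard theta-series bound $\sum_\mu e^{-|\mu|^2/8\sigma^2}$. This yields $\limsup$ of the ratio $\le 1$.

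\textbf{Lower bound on failure (the main obstacle).} Here the bound must hold for the optimal decoder of Proposition~\ref{rq2fhiu0}, which picks in each fibre a point maximizing the density of $\rho_\sigma$, i.e.\ of $\sum_{\lambda\in\Lambda}\gamma_\sigma(v+\lambda)$. I would fix a minimal $\mu$ and consider the slab region
\[
S_\mu=\{v : \ell/2+\sigma^{2}\le \langle v,\mu/|\mu|\rangle,\ |v-\mu/2|\le \sigma^{1-\epsilon}\}.
\]
On $S_\mu$, the periodized density at $v$ is dominated by the single term $\gamma_\sigma(v)$, and similarly the density at $v-\mu$ is dominated by $\gamma_\sigma(v-\mu)$; all other terms are smaller by factors $e^{-c/\sigma^2}$. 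Since $|v-\mu|<|v|$ there by a margin that beats these corrections, the maximum-likelihood decoder prefers $v-\mu$ over $v$, so $\pi(v)$ is a decoding failure. The sets $\pi(S_\mu)$ for the $N_{X,L}$ minimal $\mu$ are disjoint for small $\sigma$. The shift by $\sigma^2$ and the truncation to radius $\sigma^{1-\epsilon}$ cost only a factor $1+o(1)$ in the tail integral, since the transverse Gaussian mass outside radius $\sigma^{1-\epsilon}$ is negligible. This gives $\liminf \ge 1$.

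Making the domination estimates uniform, in particular near the boundaries of the slabs, is the delicate step. Everything else reduces to the Gaussian tail asymptotic $\int_{a}^\infty e^{-t^2/2\sigma^2}\,dt/(\sigma\sqrt{2\pi}) \sim \frac{\sigma}{a\sqrt{2\pi}}e^{-a^2/2\sigma^2}$ with $a=\ell/2$.
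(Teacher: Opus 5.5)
Your strategy is the paper's. The upper bound uses the Voronoi decoder of $\Lambda^\perp$, the inclusion $\mathrm{Vor}(\mu)\subset H_\mu$, a union bound and a theta-series estimate; your remarks on what $\mathrm{Vor}(\mu)$ contains and on overlaps are not needed there. The lower bound uses a neighbourhood of $\mu/2$ on which the maximum likelihood decoder strictly prefers $v-\mu$ to $v$. However, the lower bound fails as written, because of the margin in your slab. For $\delta\to0$, shifting the threshold from $\ell/2$ to $\ell/2+\delta$ multiplies the Gaussian tail by $(1+o(1))\exp\bigl(-(\ell\delta+\delta^2)/(2\sigma^2)\bigr)$. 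This factor is $1+o(1)$ only when $\delta=o(\sigma^2)$. With your choice $\delta=\sigma^2$ it tends to $e^{-\ell/2}$, so your argument only gives $\liminf\ge e^{-\ell/2}$ for the ratio, not $\ge 1$.

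The margin only has to beat relative corrections of size $O(e^{-c/\sigma^2})$, so any $\delta$ with $e^{-c/\sigma^2}\ll \delta/\sigma^2\to 0$ works. For instance, $\delta=\sigma^3$ gives $\gamma_\sigma(v-\mu)/\gamma_\sigma(v)\ge e^{\ell\sigma}$, which exceeds $(1+ce^{-c/\sigma^2})/(1-ce^{-c/\sigma^2})$ for small $\sigma$. The paper takes the margin $4c\sigma^2e^{-b/\sigma^2}$ on $v\cdot\mu$ and explicitly uses that it is $o(\sigma^2)$ in the final tail asymptotic.

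The step you call delicate is routine once you state the fact it rests on, which your proposal leaves implicit. Every minimal $\mu$ lies in the closed Voronoi cell $\mathcal{V}_\Lambda$ of $\Lambda$: if $|\mu-\eta|<|\mu|$ for some $\eta\in\Lambda$, then $\mu-\eta$ would be a shorter element of $\Lambda^\perp\setminus\Lambda$. By convexity, $\pm\mu/2\in\operatorname{Int}(\mathcal{V}_\Lambda)$. On a fixed compact neighbourhood of $\pm\mu/2$ inside $\operatorname{Int}(\mathcal{V}_\Lambda)$, one has $|v+\lambda|^2-|v|^2\ge a|\lambda|^2$ uniformly in $\lambda\in\Lambda\setminus\{0\}$. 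This gives the $e^{-c/\sigma^2}$ domination uniformly, slab boundaries included (Lemma \ref{m4fgiqyn}).

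Finally, reading $N_{X,L}$ as the number of minimal vectors $\mu\in\Lambda^\perp\setminus\Lambda$ is correct, and it is what the paper's proof actually counts ($N(\Lambda)=|S|$). It is not always the number of points of $K(L)$ at distance $\ell$. If $2\mu\in\Lambda$ for a minimal $\mu$, as in every code of type $(2,\ldots,2)$, then $\mu$ and $-\mu$ define the same point of $K(L)$, yet they give two disjoint failure regions near $\pm\mu/2$. For example, for $\Lambda=\sqrt2\,\Z^2\subset\R^2$ one has $N=4$, but only two points of $K(L)$ lie at distance $\ell=1/\sqrt2$.
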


Since $\ell_{X, L}$ is an invariant of the polarized complex abelian variety, it defines a function
\begin{equation}\label{gyhlyulh}
\ell : \mathcal{A}_D \too \R_{>0}.
\end{equation}
The small-noise limit therefore reduces the robustness optimization problem to a systolic optimization problem on $\mathcal{A}_D$.

\begin{problem}\label{j66sypn9}
For each polarization type $D$, study the systolic function \eqref{gyhlyulh}. In particular, determine its supremum and characterize the polarized abelian varieties, if any, for which this supremum is attained.
\end{problem}

In particular, if $(X, L)$ is a principally polarized complex abelian variety, then $(X, L^d)$ has type $(d, \ldots, d)$ and
\[
\ell_{X,L^d} = \frac{1}{d}\lambda_1(X, L),
\]
where $\lambda_1(X, L)$ is the systole of $X$, i.e.\ the length of the shortest closed geodesic, or equivalently twice the sphere packing radius of the corresponding lattice. Thus, in type $(d, \ldots, d)$, Problem \ref{j66sypn9} specializes to the classical problem of maximizing the systole of a polarized abelian variety, sometimes referred to as the \emph{Buser--Sarnak invariant}, following \cite{BuserSarnak}. This special case was studied, in particular, in \cite{Lazarsfeld1996,BuserSarnak,Bauer1998}. 
In dimensions $1$ and $2$, Buser--Sarnak \cite{BuserSarnak} identify the maximizers: the elliptic curve associated with the hexagonal lattice, and the abelian surface associated with the $D_4$ lattice. In dimension $3$, they conjecture that the supremum is attained by the Jacobian of the Klein quartic.

\subsubsection{Concatenation and isogeny}

The final construction we discuss is concatenation. In quantum error correction, concatenation means encoding the logical space of one code into another code. In the setting of GKP codes, this operation has a simple geometric interpretation.

Let $(X = V/\Lambda, L)$ be the polarized complex abelian variety associated with a GKP code. Concatenating this code with a finite stabilizer code amounts, on the lattice side, to choosing an intermediate lattice
\[
\Lambda \subset \widetilde{\Lambda} \subset \Lambda^\perp,
\]
together with the appropriate compatibility condition on the semicharacter. Geometrically, this is the same as passing to the quotient
\[
\widetilde{X} = V/\widetilde{\Lambda},
\]
and hence to an isogeny
\[
X = V/\Lambda \too \widetilde{X} = V/\widetilde{\Lambda}.
\]
In Section \ref{cu43tyv7}, we make this precise and show that concatenation of GKP codes with stabilizer codes is equivalent to passing to suitable isogenies of polarized complex abelian varieties.

\subsubsection{The dictionary}

The correspondence developed in this paper can be summarized as follows:
\[
\begin{array}{c|c}
\text{quantum error correction} & \text{complex abelian varieties} \\
\hline
\text{GKP code} & \text{polarized complex abelian variety }(X, L) \\
\text{code space} & H^0(X, L) \\
\text{choice of encoding} & \text{theta structure} \\
\text{logical Pauli gates} & \text{finite subgroup of the theta group} \\
\text{passive logical Clifford gates} & \text{image of } \Aut(X, L) \text{ in } \Sp(K(L))\\
\text{decoder} & \text{measurable section of }\phi_L : X \to \widehat{X} \\
\text{robustness} & \text{maximal mass of a section of }\phi_L \\
\text{small-noise failure probability} & \text{systolic invariant} \\
\text{concatenation} & \text{isogeny}
\end{array}
\]

\subsection{Organization of the paper}

Section \ref{l9qcvbkk} recalls the necessary background on Heisenberg groups, lattices, and complex abelian varieties. Section \ref{tj099yyk} develops a basis-independent formalism for Pauli and Clifford groups associated with finite symplectic abelian groups. Section \ref{ltg2p7xl} gives a precise mathematical definition of GKP codes and proves their correspondence with polarized complex abelian varieties endowed with theta structures.

The three properties discussed above are then treated in turn. Section \ref{b9j929je} proves the asymptotic isometry of the encoding. Section \ref{5nk36oj5} studies logical gates and their realization by Gaussian unitaries, including the geometric interpretation of passive Clifford gates in terms of automorphisms of polarized abelian varieties. Section \ref{k3dqdqez} develops the theory of decoding and robustness, and proves the small-noise asymptotic formula. Finally, Section \ref{cu43tyv7} explains the relationship between concatenation and isogenies.

\subsection{Acknowledgments}
The authors acknowledge support from the New Frontiers in Research Fund (Exploration stream), Application ID: NFRFE-2025-00668. M.M.\ acknowledges support from a Discovery Grant (RGPIN-2023-04587) from the Natural Sciences and Engineering Research Council of Canada (NSERC). B.R.\ acknowledges support from an NSERC Discovery Grant (RGPIN-2022-04451) and Fonds de Recherche du Québec - Nature et Technologies.

\subsection{Notation and conventions}

We use the convention that hermitian inner products are linear in the first variable and conjugate-linear in the second. For a lattice $\Lambda$ in a real symplectic vector space $(V,E)$, its symplectic dual is
\[
\Lambda^\perp \coloneqq \{\mu \in V : E(\mu,\Lambda)\subset \Z\}.
\]
When a euclidean or hermitian metric is fixed, $\det(\Lambda)$ denotes the covolume of $\Lambda$, and $\Lambda^*$ denotes the euclidean dual lattice with respect to this metric. If $V$ is viewed as a hermitian vector space with hermitian form $H$, we write $|v|^2 = H(v,v)$.

For functions $u(t)$ and $v(t)$, we use the standard notation
\[
u(t)=O(v(t)) \quad \text{as } t \to t_0
\]
to mean that there exist constants $C>0$ and $\e > 0$ such that
\[
|u(t)|\le C|v(t)|
\]
whenever $0<|t-t_0| < \e$.
We write
\[
u(t)=o(v(t)) \quad \text{as } t \to t_0
\]
to mean that
\[
\lim_{t\to t_0}\frac{u(t)}{v(t)}=0.
\]
For positive functions $u(t)$ and $v(t)$, we write
\[
u(t)\sim v(t) \quad \text{as } t \to t_0
\]
to mean that
\[
\lim_{t\to t_0}\frac{u(t)}{v(t)}=1.
\]

\section{Mathematical preliminaries: Heisenberg groups, lattices, and complex abelian varieties}\label{l9qcvbkk}

This section recalls the background material needed throughout the paper and fixes the normalizations used later. We begin with symplectic abelian groups and Heisenberg groups, then recall the Schr\"odinger and Fock realizations of the real Heisenberg representation and the Bargmann transform relating them. We then turn to symplectically integral lattices and explain how they give rise to complex abelian varieties, theta functions, and theta groups. Most of the material is standard, but we spell out the normalizations carefully, since the precise factors of $2\pi$, the choice of hermitian form, and the comparison between the Schr\"odinger and Fock realizations will be important in later sections. Standard references include \cite{Mumford-Tata-I,Mumford-Tata-II,Mumford-Tata-III,Polishchuk:2003,BirkenhakeLange2004ComplexAbelianVarieties,Folland:1989}.

\subsection{Symplectic abelian groups}\label{6kiy7crh}

Let $K$ be a locally compact abelian group. We write its group law additively and denote its neutral element by $0 \in K$.

An \defn{alternating form} on $K$ is a continuous group bihomomorphism
\[
\omega : K \times K \too \U(1)
\]
such that $\omega(x, x) = 1$ for all $x \in K$.
A \defn{symplectic form} on $K$ is an alternating form $\omega$ that is \defn{non-degenerate}, i.e.\ if $\omega(x, y) = 1$ for all $y \in K$, then $x = 0$. 

A \defn{symplectic abelian group} is a pair $(K, \omega)$ consisting of a locally compact abelian group and a symplectic form on it.

For a sequence of positive integers $(d_1, \ldots, d_n)$, set
\[
D \coloneqq \diag(d_1, \ldots, d_n)
\]
and
\[
K_D \coloneqq (\Z^n/D\Z^n) \oplus (\Z^n/D\Z^n),
\]
where
\[
\Z^n/D\Z^n = \Z/d_1\Z \oplus \cdots \oplus \Z/d_n\Z.
\]
For a finite abelian group $K$, we denote by $\exp(K)$ its \defn{exponent}, i.e.\ the smallest integer $m \ge 1$ such that $mk = 0$ for all $k \in K$. In particular,
\[
\exp(K_D) = \operatorname{lcm}(d_1, \ldots, d_n).
\]

There is a canonical symplectic form on $K_D$ given by
\begin{equation}\label{8zd3oh5f}
\omega_D : K_D \times K_D \too \U(1), \quad \omega_D((a, b), (u,v)) = \exp(2\pi i \sum_{j = 1}^n \frac{b_j u_j - a_j v_j}{d_j}),
\end{equation}
and we refer to the pair $(K_D, \omega_D)$ as the \defn{standard symplectic abelian group} of type $D$.

\begin{lemma}\label{b47u02uc}
Let $(K, \omega)$ be a non-trivial finite symplectic abelian group.
Then there exist unique integers $d_1, \ldots, d_n \ge 2$ such that $d_1 \mid d_2 \mid \cdots \mid d_n$ and
\begin{equation}\label{sp66c0hd}
(K, \omega) \cong (K_D, \omega_D)
\end{equation}
as symplectic abelian groups, where $D = \diag(d_1, \ldots, d_n)$.
\end{lemma}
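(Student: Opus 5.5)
The plan is to prove existence by induction on $|K|$, splitting off one hyperbolic plane at a time (a symplectic Gram--Schmidt adapted to finite abelian groups), and then to get uniqueness from the group structure of $K$.

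\emph{Existence.} Let $m = \exp(K)$. Since $\omega$ is non-degenerate, the map $K \to \widehat{K} = \mathrm{Hom}(K,\U(1))$, $x \mapsto \omega(x,\cdot)$, is injective. As $|K| = |\widehat{K}|$, it is an isomorphism. Pick $e \in K$ of order $m$. The character $\omega(e,\cdot)$ has order exactly $m$, so its image is the full group $\mu_m$ of $m$-th roots of unity. Hence there is $f \in K$ with $\omega(e,f) = e^{2\pi i/m}$. Then $\omega(f,e)$ has order $m$, so $f$ has order $m$ as well.

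Next, let $H = \langle e, f\rangle$. I claim $H \cong \Z/m \oplus \Z/m$. Indeed, suppose $ae + bf = 0$. Pairing with $e$ gives $e^{-2\pi i b/m} = 1$, so $m \mid b$. Pairing with $f$ gives $m \mid a$. The restriction of $\omega$ to $H$ is therefore the standard form $\omega_{(m)}$, which is non-degenerate.

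Set $H^\perp = \{x : \omega(x,e) = \omega(x,f) = 1\}$. Since $\omega|_H$ is non-degenerate, $H \cap H^\perp = 0$. For any $x \in K$, the character $\omega(x,\cdot)|_H$ equals $\omega(h,\cdot)|_H$ for some $h \in H$, because $H \to \widehat H$ is bijective. Then $x - h \in H^\perp$, so $K = H \oplus H^\perp$ orthogonally, and $\omega|_{H^\perp}$ is non-degenerate. By induction, $H^\perp \cong K_{D'}$ with $D' = \diag(d_1,\dots,d_{n-1})$ and $d_1 \mid \cdots \mid d_{n-1}$. Since $\exp(H^\perp) \mid m$, we get $d_{n-1} \mid m$, so we may set $d_n = m$. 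The base case is $H^\perp = 0$.

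Reordering coordinates, a direct sum of copies of $(\Z/d_j)^2$ with form $\exp(2\pi i(b_ju_j - a_jv_j)/d_j)$ is exactly $(K_D,\omega_D)$. One must match sign conventions: send $e \mapsto (0,1)$ and $f \mapsto (1,0)$, or replace $f$ by $-f$, to get $\omega_D((a,b),(u,v))$ with the stated sign.

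\emph{Uniqueness.} Forgetting $\omega$, an isomorphism \eqref{sp66c0hd} gives $K \cong \bigoplus_j (\Z/d_j)^2$ as abelian groups. The invariant factor decomposition of this group is $(d_1,d_1,d_2,d_2,\dots,d_n,d_n)$, since the $d_j \ge 2$ already form a divisibility chain. By uniqueness of invariant factors of finite abelian groups, the list $(d_1,\dots,d_n)$ is determined by $K$.

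The main obstacle is the existence step: producing $f$ with $\omega(e,f)$ a primitive $m$-th root of unity, and verifying that $H$ is free over $\Z/m$ and splits off. Both come from the duality $K \cong \widehat K$ given by non-degeneracy, so that is where I would concentrate care.
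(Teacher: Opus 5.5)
Your proposal is correct and follows essentially the same route as the paper: take $e$ of maximal order $m$ and a partner $f$ with $\omega(e,f)$ a primitive $m$-th root of unity, show $H=\langle e,f\rangle\cong(\Z/m\Z)^2$ by pairing, split $K=H\oplus H^\perp$ via non-degeneracy, iterate (using $\exp(H^\perp)\mid m$), and get uniqueness from the invariant factors $(d_1,d_1,\ldots,d_n,d_n)$ of $K$. The differences are cosmetic: you build the splitting from an explicit projection using $H\cong\widehat{H}$, whereas the paper counts via $|K|=|K_0|\,|K_0^\perp|$. You also normalize $\omega(e,f)=e^{2\pi i/m}$ and swap the roles of the two coordinates, whereas the paper normalizes to $e^{-2\pi i/m}$; both sign conventions correctly produce $\omega_D$.
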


\begin{proof}
Let $m = \exp(K)$.
Choose $x \in K$ of order $m$. Since $\omega$ is non-degenerate, the character $\omega(x, \cdot) \in \widehat{K} \coloneqq \operatorname{Hom}(K, \U(1))$ has order $m$. Hence there exists $y \in K$ such that $\omega(x, y)$ has order $m$. Then $y$ has order $m$, and after replacing $y$ by a suitable multiple of itself, we may assume that
\[
\omega(x, y) = e^{-2\pi i/m}.
\]
Let $K_0 = \ip{x, y}$. If $kx + ly = 0$, then $1 = \omega(kx + ly, x) = \omega(y, x)^l$, so $m \mid l$. Similarly, $m \mid k$. Hence $K_0 \cong \Z/m\Z \oplus \Z/m\Z$. With respect to the basis $x, y$, the restriction of $\omega$ to $K_0$ is given by
\[
\omega((a,b),(u,v)) = \exp(2\pi i\frac{bu - av}{m}),
\]
so $K_0$ is a standard symplectic abelian group of type $(m)$.

Let
\[
K_0^\perp \coloneqq \{k \in K : \omega(k, h) = 1 \text{ for all } h \in K_0\}.
\]
Since the restriction of $\omega$ to $K_0$ is non-degenerate, we have $K_0 \cap K_0^\perp = 0$. Moreover, the map $K \to \widehat{K_0}$ given by $k \mtoo \omega(k, \cdot)|_{K_0}$ has kernel $K_0^\perp$ and is surjective. Thus $|K| = |K_0||K_0^\perp|$, and therefore
\[
K = K_0 \oplus K_0^\perp.
\]
The exponent of $K_0^\perp$ divides $m$, so iterating this construction gives the desired decomposition.

Finally, the integers $d_i$ are uniquely determined by the finite abelian group $K$, by the structure theorem for finite abelian groups.
\end{proof}

\subsection{Heisenberg groups}

We begin with a general construction. Consider a central extension
\[
\begin{tikzcd}
1 \arrow{r} & A \arrow{r}{f} & B \arrow{r}{g} & C \arrow{r} & 1,
\end{tikzcd}
\]
where $C$ is abelian. Since $A$ lies in the center of $B$, the commutator of two elements of $B$ only depends on their images in $C$. Thus there is a well-defined map
\begin{equation}\label{4skewf8i}
\omega : C \times C \too A
\end{equation}
uniquely characterized by
\[
f(\omega(g(b_1), g(b_2))) = b_1b_2b_1^{-1}b_2^{-1}
\]
for all $b_1, b_2 \in B$. This map is called the \defn{commutator map} of the central extension. It is an alternating form.

Let $(K, \omega)$ be a symplectic abelian group. A \defn{Heisenberg group} over $(K, \omega)$ is a central extension
\[
\begin{tikzcd}
1 \arrow{r} & \U(1) \arrow{r} & \H \arrow{r} & K \arrow{r} & 1
\end{tikzcd}
\]
whose commutator map is $\omega$. Such Heisenberg groups exist and are unique up to isomorphism of central extensions of $K$ by $\U(1)$.

The basic representation-theoretic fact about Heisenberg groups is the \defn{Stone--von Neumann theorem}. It says that $\H$ has a unique irreducible unitary representation
\[
\rho : \H \too \U(W)
\]
on which the central subgroup $\U(1) \subset \H$ acts by scalar multiplication. We call this representation the \defn{Stone--von Neumann representation} of $\H$. It is faithful, and the centralizer of $\rho(\H)$ is the group of scalars.

The Heisenberg group of a finite symplectic abelian group will be described explicitly in Section \ref{fospnkpi}.

\subsection{The real Heisenberg group and its representations}\label{p64bwipt}

Let $V$ be a real vector space of dimension $2n$ endowed with a symplectic form
\[
E : V \times V \too \R.
\]
The \defn{real Heisenberg group} of $(V,E)$ is the Heisenberg group of the symplectic abelian group $(V, \exp(-2\pi i E))$. Concretely, it is the group
\[
\HH(V) \coloneqq V \times \U(1)
\]
with multiplication
\[
(u, \alpha)(v, \beta) = \big(u + v,  \alpha\beta e^{-i \pi E(u, v)}\big)
\]
for all $(u, \alpha), (v, \beta) \in \HH(V)$.

We now recall two standard realizations of the Stone--von Neumann representation of $\HH(V)$, namely the Schr\"odinger and Fock representations, together with the Bargmann transform relating them.

\subsubsection{The Schr\"odinger representation}\label{4c5kvuhw}

Choose an identification $V = \R^{2n}$ such that $E$ is the standard symplectic form
\[
E : \R^{2n} \times \R^{2n} \too \R,
\]
given by
\[
E((x_1,y_1,\ldots, x_n, y_n), (\tilde{x}_1, \tilde{y}_1, \ldots, \tilde{x}_n, \tilde{y}_n))
=
\sum_{i = 1}^n (x_i \tilde{y}_i - y_i \tilde{x}_i).
\]
The \defn{Schr\"odinger representation} is the representation of $\HH(V)$ on the space $L^2(\R^n)$ of square-integrable functions $\R^n \to \C$ given by
\[
\HH(V) \too \U(L^2(\R^n)),\quad (u, \alpha) \mtoo \alpha T_u,
\]
where the \defn{translation operators} \cite[Eq.\ (1)]{GKP,royer2022encoding} are defined by
\begin{equation}\label{o88gpt48}
T_{(x_1, y_1, \ldots, x_n, y_n)} = \exp(-i\sqrt{2\pi}(y \cdot \hat{q} - x \cdot \hat{p})).
\end{equation}
Here $\hat{q} = (\hat{q}_1, \ldots, \hat{q}_n)$ and $\hat{p} = (\hat{p}_1, \ldots, \hat{p}_n)$ are the position and momentum operators acting on a function $\psi : \R^n \to \C$ by
\begin{equation}\label{xvpb3mus}
(\hat{q}_j\psi)(r) = r_j \psi(r),
\qquad
(\hat{p}_j\psi)(r) = \frac{1}{i}\frac{\partial\psi}{\partial r_j}(r),
\end{equation}
for $r \in \R^n$. Note that the $\sqrt{2\pi}$ factor in the definition of the translation operators is not standard in the broader physics and mathematics literature, but is convenient when working with GKP codes.

Equivalently, by the Baker--Campbell--Hausdorff formula,
\[
(T_{(x,y)}\psi)(r)
=
e^{i\pi x\cdot y}
e^{-i\sqrt{2\pi}\,y\cdot r}
\psi(r+\sqrt{2\pi}\,x),
\quad r \in \R^n.
\]
The translation operators satisfy
\begin{equation}\label{onkixs2f}
T_u T_v = e^{-2\pi i E(u, v)} T_v T_u = e^{-\pi i E(u, v)} T_{u + v}
\end{equation}
for all $u, v \in \R^{2n}$. The Schr\"odinger representation is irreducible and unitary, and is therefore a realization of the Stone--von Neumann representation of $\HH(V)$.

The Schr\"odinger representation has a unique continuous extension to the space $\S'(\R^n)$ of tempered distributions on $\R^n$.

\subsubsection{The Fock representation}\label{uwthy1do}

A second realization of the Stone--von Neumann representation of $\HH(V)$ is obtained after choosing a compatible complex structure on $V$. Choose a complex structure $I$ on $V$ such that
\[
E(Iu, Iv) = E(u, v)
\]
for all $u, v \in V$ and
\[
E(Iu, u) > 0
\]
for all $u \ne 0$. The corresponding hermitian inner product is
\[
H(z, w) = E(Iz, w) + i E(z, w).
\]
Regard $V$ as a complex vector space via $I$. Let $\mathcal{F}(V)$ be the space of holomorphic functions $f : V \to \C$ such that
\[
\int_V |f(z)|^2\exp(-\pi H(z, z))\vol_H < \infty,
\]
where $\vol_H$ is the standard volume form induced by $H$. Then $\mathcal{F}(V)$ is a Hilbert space with inner product
\[
\ip{f, g}_{\mathcal{F}} = \frac{1}{\pi^n}\int_V f(z)\overline{g(z)}\exp(-\pi H(z, z))\vol_H.
\]
The Heisenberg group $\HH(V)$ acts unitarily and irreducibly on $\mathcal{F}(V)$ by
\begin{equation}\label{jgbobzr8}
(\rho(u, \alpha)f)(z) = \alpha e^{-\pi(\frac{1}{2}H(u, u) + H(z, u))}f(z + u)
\end{equation}
for all $(u, \alpha) \in \HH(V)$ and $z \in V$. This is the \defn{Fock representation} of $\HH(V)$.

As for the Schr\"odinger representation, there is a natural extension of the Fock representation to a larger space. Let $\E(V)$ be the space of holomorphic functions $f : V \to \C$ of Gaussian-polynomial growth, meaning that there exist constants $k \ge 0$ and $C > 0$ such that
\begin{equation}\label{w19o40qe}
|f(v)| \le C e^{\pi H(v, v)/2}(1 + H(v, v))^{k/2}
\end{equation}
for all $v \in V$. Then the formula \eqref{jgbobzr8} defines a representation of $\HH(V)$ on $\E(V)$.

\subsubsection{The Bargmann transform}\label{73mes5iz}

By the Stone--von Neumann theorem, the Schr\"odinger and Fock representations are isomorphic as unitary representations of $\HH(V)$. An explicit isomorphism is given by the Bargmann transform \cite{Bargmann1961}. To write it down, choose a symplectic isomorphism $V \cong \R^{2n}$ such that the complex structure is the one induced by the identification
\[
\R^{2n} = \C^n, \qquad (x, y) = x - iy.
\]
Equivalently,
\[
I(x_1, y_1, \ldots, x_n, y_n) = (y_1, -x_1, \ldots, y_n, -x_n).
\]
In these coordinates,
\[
H(z, w) = z \cdot \bar{w}.
\]

The Bargmann transform is the unitary map
\[
\B : L^2(\R^n) \too \mathcal{F}(\C^n)
\]
defined by
\[
\B(\psi)(z) = \pi^{n/4} \int_{\R^n} e^{-\frac{1}{2}(\pi z \cdot z + q \cdot q) + \sqrt{2\pi} z \cdot q} \psi(q)dq
\]
for all $\psi \in L^2(\R^n)$ and $z \in \C^n$.
\footnote{We have introduced a rescaling by $\sqrt{\pi}$ so that $\B(\psi)(z) = \pi^{n/2} \B_{\mathrm{std}}(\psi)(\sqrt{\pi}z)$, where $\B_{\mathrm{std}}$ is the standard Bargmann transform as it appears originally in \cite{Bargmann1961,Bargmann1967}. This rescaling makes the transform an isomorphism of representations with our normalization of the translation operators.}
As shown in \cite{Bargmann1967}, the Bargmann transform extends uniquely to a linear homeomorphism
\begin{equation}\label{l5qky9yw}
\B : \S'(\R^n) \too \E(\C^n),
\end{equation}
where $\mathcal{S}'(\R^n)$ is the space of tempered distributions on $\R^n$ and $\E(\C^n)$ is the space of holomorphic functions of Gaussian-polynomial growth defined in Section \ref{uwthy1do}.

For an operator
\[
A : \mathcal{S}'(\R^n) \too \mathcal{S}'(\R^n),
\]
we denote by
\[
A^\B \coloneqq \B \circ A \circ \B^{-1}
\]
the corresponding operator on $\E(\C^n)$. In particular, consider the position and momentum operators $\hat{q}_j, \hat{p}_j : \S'(\R^n) \to \S'(\R^n)$ defined by \eqref{xvpb3mus}. Let $\hat{a}_j$ and $\hat{a}_j^\dagger$ be defined by
\[
\hat{q}_j = \frac{\hat{a}_j + \hat{a}_j^\dagger}{\sqrt{2}}
\quad\text{and}\quad
\hat{p}_j = \frac{\hat{a}_j - \hat{a}_j^\dagger}{i\sqrt{2}}.
\]
Then
\begin{equation}\label{5ctxjsq8}
(\hat{a}_j^\B f)(z) = \frac{1}{\sqrt{\pi}}\frac{\partial f}{\partial z_j}(z)
\quad\text{and}\quad
((\hat{a}_j^\dagger)^\B f)(z) = \sqrt{\pi} z_j f(z)
\end{equation}
for all $f \in \E(\C^n)$ and $z \in \C^n$ \cite{Bargmann1961,Bargmann1967}.

It follows that, if $u = x - iy \in \C^n$, then
\[
T_u^\B = \exp(\sqrt{\pi}(u \cdot \hat{a}^\B - \bar{u} \cdot (\hat{a}^\dagger)^\B)).
\]
By the Baker--Campbell--Hausdorff formula, we have
\begin{equation}\label{c03f536z}
(T^\B_uf)(z) = e^{-\pi(\frac{1}{2}H(u, u) + H(z, u))} f(z + u)
\end{equation}
for all $f \in \E(\C^n)$ and $z \in \C^n$.
Thus the Bargmann transform is an isomorphism of $\HH(V)$-representations.

\subsubsection{The extended metaplectic representation}\label{zz4171u8}

Let $\Sp(V)$ be the symplectic group of $(V, E)$ and let $\pi : \Mp(V) \to \Sp(V)$ be its double cover, where $\Mp(V)$ is the metaplectic group. Throughout this section and the rest of the paper, if $M \in \Mp(V)$ and $v \in V$, we write
\[
Mv \coloneqq \pi(M)v.
\]

Choose an identification $V \cong \R^{2n}$ as in Section \ref{4c5kvuhw} to define the Schr\"odinger representation on $L^2(\R^n)$. 
The \defn{metaplectic representation} is the unitary representation
\[
Q : \Mp(V) \too \U(L^2(\R^n)), \quad M \mtoo Q_M,
\]
characterized by the relation
\[
Q_M T_u Q_M^{-1} = T_{Mu}, \quad \text{for all } M \in \Mp(V) \text{ and } u \in V.
\]

The Schr\"odinger and metaplectic representations combine into a single unitary representation
\[
\rho : \Mp(V) \ltimes \HH(V) \too \U(L^2(\R^n)), \quad (M, u, \alpha) \mtoo \alpha T_u Q_M,
\]
where the semidirect product $\Mp(V) \ltimes \HH(V)$ is called the \defn{extended metaplectic group} and has group law
\[
(M, u, \alpha)(N, v, \beta)
=
(MN, u + Mv, \alpha \beta e^{-i\pi E(u, Mv)}).
\]
The representation $\rho$ extends uniquely to a continuous representation on $\mathcal{S}'(\R^n)$.

Recall that the restriction $\pi : \pi^{-1}(\U(V)) \to \U(V)$ splits, where $\U(V) = \Sp(V) \cap \mathrm{O}(V)$. Hence the extended metaplectic representation restricts to a representation
\[
\U(V) \ltimes \HH(V) \too \U(L^2(\R^n)).
\]
Under the Fock representation $\mathcal{F}(V)$, this restricted representation is given by
\begin{equation}\label{kilzod4v}
(\rho(M, u, \alpha) f)(z) = \alpha e^{-\pi(\frac{1}{2}H(u, u) + H(z, u))} f(M^{-1}(z + u));
\end{equation}
see \cite[Ch.\ 4, Section 2]{Folland:1989}.

\subsection{Symplectically integral lattices and complex abelian varieties}\label{23yhqu4v}

Let $V$, $E$, $I$, and $H$ be as in the preceding section.
For a lattice $\Lambda \subset V$, its \defn{symplectic dual} is the lattice
\[
\Lambda^\perp \coloneqq \{\mu \in V : E(\mu, \Lambda) \subset \Z\}.
\]
A lattice is called \defn{symplectically integral} if $\Lambda \subset \Lambda^\perp$. The following result is standard.

\begin{lemma}[Frobenius]\label{aden93im}
Let $\Lambda \subset V$ be a symplectically integral lattice of maximal rank. Then there is a basis $\lambda_1, \ldots, \lambda_n, \mu_1, \ldots, \mu_n$ for $\Lambda$ such that for all $i, j$ we have $E(\lambda_i, \lambda_j) = 0$, $E(\mu_i, \mu_j) = 0$, and $E(\lambda_i, \mu_j) = d_i \delta_{ij}$, where $d_i$ are positive integers satisfying $d_1 \mid d_2 \mid \cdots \mid d_n$.
Moreover, $(d_1, \ldots, d_n)$ is uniquely determined by $\Lambda$, independently of the choice of basis.
\end{lemma}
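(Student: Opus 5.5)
The plan is to reduce to Lemma \ref{b47u02uc} by way of the finite symplectic abelian group $K = \Lambda^\perp/\Lambda$, and then lift a symplectic basis of $K$ back to a symplectic basis of $\Lambda$. A more self-contained alternative is to run the classical elementary-divisor argument directly on the integer-valued alternating form $E|_{\Lambda\times\Lambda}$. I will follow the direct route, because it handles the case $d_1 = 1$, which Lemma \ref{b47u02uc} excludes.

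\emph{Existence.} Since $\Lambda\subset\Lambda^\perp$, the restriction $E:\Lambda\times\Lambda\to\Z$ is an integer-valued alternating form. It is nondegenerate over $\R$ because $\Lambda$ spans $V$. Its image $E(\Lambda,\Lambda)$ is a nonzero subgroup $d_1\Z$ with $d_1>0$. Choose $\lambda_1,\mu_1\in\Lambda$ with $E(\lambda_1,\mu_1)=d_1$. Every value $E(\lambda_1,v)$ and $E(\mu_1,v)$ for $v\in\Lambda$ is divisible by $d_1$. Hence the projection
\[
v \mtoo v - \frac{E(v,\mu_1)}{d_1}\lambda_1 + \frac{E(v,\lambda_1)}{d_1}\mu_1
\]
has integral coefficients and maps $\Lambda$ onto $\Lambda_1 \coloneqq \{v\in\Lambda : E(v,\lambda_1)=E(v,\mu_1)=0\}$. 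This gives the splitting $\Lambda = (\Z\lambda_1\oplus\Z\mu_1)\oplus\Lambda_1$. The lattice $\Lambda_1$ spans the $E$-orthogonal complement of $\operatorname{span}(\lambda_1,\mu_1)$, so $E|_{\Lambda_1}$ is again nondegenerate. Its values lie in $d_1\Z$, and by induction on $n$ we obtain the remaining basis vectors with $d_1\mid d_2$. The divisibility is then propagated through the induction, since each new minimal value generates a subgroup of the previous one.

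\emph{Uniqueness.} In such a basis, the Gram matrix of $E|_\Lambda$ is $\begin{pmatrix}0&D\\-D&0\end{pmatrix}$. A change of $\Z$-basis of $\Lambda$ changes this matrix by integral congruence. The Smith normal form, or equivalently the gcd of the $k\times k$ minors, is invariant under integral congruence. The invariant factors of this matrix are $d_1,d_1,d_2,d_2,\dots,d_n,d_n$, so the $d_i$ are determined by $\Lambda$. Equivalently, $\Lambda^\perp/\Lambda\cong(\Z^n/D\Z^n)^2$ as abelian groups, and the structure theorem for finite abelian groups, together with the count of factors with $d_i=1$ coming from the rank $2n$, pins them down.

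\emph{Main obstacle.} The only real subtlety is the integrality of the projection, namely that $d_1$ divides all pairings with $\lambda_1$ and $\mu_1$. This holds precisely because $d_1$ was chosen as the generator of the full image $E(\Lambda,\Lambda)$, so every pairing lies in $d_1\Z$.
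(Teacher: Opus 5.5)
The paper gives no proof of this lemma; it cites it as standard. Your direct elementary-divisor route is the classical argument, and most of it is correct: the projection formula, the splitting, the nondegeneracy of $E|_{\Lambda_1}$, the induction, and both uniqueness arguments (Smith normal form under integral congruence, or the structure of $\Lambda^\perp/\Lambda$ together with the rank count). There is, however, a gap at exactly the step you single out as the main obstacle. You assert that the set of values $E(\Lambda,\Lambda)$ is a subgroup $d_1\Z$. You then use this both to choose $\lambda_1,\mu_1$ with $E(\lambda_1,\mu_1)=d_1$ and to conclude that $d_1$ divides every pairing. But the set of values of a bilinear form is not a priori closed under addition; only $E(u,\Lambda)$ for a fixed $u$ is obviously a subgroup. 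If $d_1$ is the gcd of all values, what needs proof is that it is attained as a single value $E(\lambda_1,\mu_1)$. If $d_1$ is the smallest positive value, what needs proof is that it divides all values. Either way, the fact you invoke to resolve the obstacle is essentially the obstacle itself, so the one nontrivial point of the theorem is assumed rather than proved.

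The standard repair goes as follows. Let $d_1$ be the smallest positive value, attained as $E(\lambda_1,\mu_1)$. For $v\in\Lambda$, write $E(\lambda_1,v)=qd_1+r$ with $0\le r<d_1$. Then $E(\lambda_1,v-q\mu_1)=r$, so $r=0$ by minimality. Similarly, $E(\mu_1,v+q\lambda_1)$ shows that $d_1\mid E(\mu_1,v)$. This gives integrality of your projection and the splitting $\Lambda=(\Z\lambda_1\oplus\Z\mu_1)\oplus\Lambda_1$.

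For $d_1\mid d_2$, take $u,v\in\Lambda_1$. The subgroup $E(\lambda_1+u,\Lambda)$ contains $E(\lambda_1+u,\mu_1)=d_1$ and $E(\lambda_1+u,v)=E(u,v)$, hence it contains $\gcd(d_1,E(u,v))$. This is a positive value at most $d_1$, so by minimality it equals $d_1$, and therefore $d_1\mid E(u,v)$. Only after this does $E(\Lambda,\Lambda)=d_1\Z$ follow, and from there your induction goes through unchanged.
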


\begin{definition}\label{z2w9s5vx}
The sequence $(d_1, \ldots, d_n)$ in Lemma \ref{aden93im} is called the \defn{type} of $\Lambda$. A basis $\lambda_1, \ldots, \lambda_n, \mu_1, \ldots, \mu_n$ as in Lemma \ref{aden93im} is called a \defn{symplectic basis}.
\end{definition}

In this case,
\[
K \coloneqq \Lambda^\perp/\Lambda
\]
is a finite symplectic abelian group with respect to
\[
\omega : K \times K \too \U(1), \quad \omega([\mu_1], [\mu_2]) = e^{-2\pi i E(\mu_1, \mu_2)}.
\]

\begin{lemma}\label{jiizju0l}
The exponent of $K$ is $d_n$.
\end{lemma}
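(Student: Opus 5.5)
We compute $\Lambda^\perp$ explicitly in a symplectic basis and read off the group structure of $K$. Choose a symplectic basis $\lambda_1, \ldots, \lambda_n, \mu_1, \ldots, \mu_n$ of $\Lambda$ as in Lemma \ref{aden93im}, with $d_1 \mid \cdots \mid d_n$.

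First, we claim that
\[
\Lambda^\perp = \bigoplus_{i=1}^n \Big( \Z \tfrac{\lambda_i}{d_i} \oplus \Z \tfrac{\mu_i}{d_i} \Big).
\]
Since $\Lambda$ has maximal rank, these $2n$ vectors form an $\R$-basis of $V$. Write an arbitrary $v \in V$ as
\[
v = \sum_i \Big(a_i \tfrac{\lambda_i}{d_i} + b_i \tfrac{\mu_i}{d_i}\Big), \qquad a_i, b_i \in \R.
\]
The relations $E(\lambda_i,\lambda_j) = 0$, $E(\mu_i,\mu_j) = 0$ and $E(\lambda_i,\mu_j) = d_i\delta_{ij}$ give
\[
E(v, \mu_j) = a_j \quad\text{and}\quad E(v, \lambda_j) = -b_j.
\]
The condition $E(v, \Lambda) \subset \Z$ only needs to be checked on the basis of $\Lambda$. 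It is therefore equivalent to $a_j, b_j \in \Z$ for all $j$, which proves the claim.

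It follows that
\[
K = \Lambda^\perp/\Lambda \cong \bigoplus_{i=1}^n (\Z/d_i\Z)^2,
\]
with $[\lambda_i/d_i]$ and $[\mu_i/d_i]$ generating cyclic factors of order exactly $d_i$. The exponent of this group is $\operatorname{lcm}(d_1, \ldots, d_n)$. The divisibility chain $d_1 \mid \cdots \mid d_n$ gives $\operatorname{lcm}(d_1, \ldots, d_n) = d_n$, so $\exp(K) = d_n$.

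No step here is a real obstacle. The only point requiring care is the dual-basis computation, in particular the signs coming from the antisymmetry of $E$. Those signs do not affect which coefficients must be integers.
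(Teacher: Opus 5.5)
Your proposal is correct and follows the paper's proof: both compute $\Lambda^\perp$ as the lattice spanned by $\lambda_i/d_i, \mu_i/d_i$ in a symplectic basis and read off $\exp(K) = \operatorname{lcm}(d_1,\ldots,d_n) = d_n$. Your explicit dual-basis verification, including the sign from $E(\mu_i,\lambda_j) = -d_j\delta_{ij}$, fills in the step the paper only asserts.
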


\begin{proof}
The symplectic dual lattice is generated by $\frac{\lambda_1}{d_1}, \ldots, \frac{\lambda_n}{d_n}, \frac{\mu_1}{d_1}, \ldots, \frac{\mu_n}{d_n}$. Hence the exponent of $K$ is the smallest integer $m$ such that $m/d_i \in \Z$ for all $i$, i.e.\ $m = \mathrm{lcm}(d_1, \ldots, d_n) = d_n$.
\end{proof}

A \defn{semicharacter} for $(\Lambda, E)$ is a map $\nu : \Lambda \to \U(1)$ such that
\[
\nu(\lambda + \mu) = \nu(\lambda)\nu(\mu)e^{i\pi E(\lambda, \mu)}
\]
for all $\lambda, \mu \in \Lambda$. In this case, the map
\[
\Lambda \too \HH(V), \quad \lambda \mtoo (\lambda, \nu(\lambda)^{-1})
\]
is a group homomorphism from the abelian group $\Lambda$ to the real Heisenberg group $\HH(V)$.
We denote its image by
\begin{equation}\label{bhl8ig4v}
\Lambda_\nu \coloneqq \{(\lambda, \nu(\lambda)^{-1}) \in \HH(V) : \lambda \in \Lambda\}.
\end{equation}
Let $C_{\HH(V)}(\Lambda_\nu)$ be the centralizer of $\Lambda_\nu$ in $\HH(V)$.
Concretely, we have
\[
C_{\HH(V)}(\Lambda_\nu) = \Lambda^\perp \times \U(1) \subset \HH(V).
\]
The Heisenberg group of $(K, \omega)$ can then be identified with the quotient group
\[
\Heis_{\Lambda, \nu} \coloneqq C_{\HH(V)}(\Lambda_\nu)/\Lambda_\nu.
\]

The Stone--von Neumann representation of $\Heis_{\Lambda, \nu}$ can be obtained from that of $\HH(V)$ as follows. Let $\rho : \HH(V) \too \GL(\E(V))$ be the representation of $\HH(V)$ on the space $\E(V)$ defined in Section \ref{uwthy1do}. The space of \defn{canonical theta functions} is the stabilizer of $\Lambda_\nu$, namely
\begin{align*}
\Theta_{\Lambda, \nu} &\coloneqq \{f \in \E(V) : \rho(\lambda, \nu(\lambda)^{-1})f = f  \text{ for all } \lambda \in \Lambda\} \\
&= \{ f \in \E(V) : f(z + \lambda) = \nu(\lambda) e^{\pi(\frac{1}{2}H(\lambda, \lambda) + H(z, \lambda))} f(z) \text{ for all } z \in V \text{ and } \lambda \in \Lambda\}.
\end{align*}
It is naturally a representation of $\Heis_{\Lambda, \nu}$. Moreover, this representation is unitary with respect to
\begin{equation}\label{cu7nnb56}
\ip{f, g}_\Theta \coloneqq \frac{1}{\pi^n}\int_{V/\Lambda} f(v)\overline{g(v)}\exp(-\pi H(v, v)) \vol_H,
\end{equation}
where $\vol_H$ is the volume form on $V/\Lambda$ induced by $H$.
The representation $\Theta_{\Lambda, \nu}$ is irreducible, and hence is the Stone--von Neumann representation of $\Heis_{\Lambda, \nu}$.

Recall that the lattice $\Lambda$ defines a complex abelian variety $X \coloneqq V/\Lambda$, and the semicharacter $\nu$ defines an ample line bundle $L \coloneqq L(H, \nu)$ on $X$ via the Appell--Humbert theorem. Explicitly,
\begin{equation}\label{zcsp4rse}
L(H, \nu) \coloneqq (V \times \C) / \Lambda,
\end{equation}
where $\Lambda$ acts by
\begin{equation}\label{y5vauty8}
\lambda \cdot (v, z) = (v + \lambda,\, \nu(\lambda)e^{\pi(\frac{1}{2}H(\lambda, \lambda) + H(v, \lambda))}z).
\end{equation}
It follows that every section of $L$ is of the form $[v] \mto [v, f(v)]$ for a unique theta function $f \in \Theta_{\Lambda, \nu}$, giving a canonical identification
\[
\Theta_{\Lambda, \nu} \cong H^0(X, L).
\]

We now relate the quotient Heisenberg group above to the theta group of the line bundle $L$. For $x \in X$, we denote the translation by $x$ by
\[
t_x : X \too X, \quad y \mtoo x + y.
\]
The \defn{theta group} of $L$, denoted $\G(L)$, is the group of line bundle isomorphisms $\varphi : L \to L$ covering a translation $t_x : X \to X$. The theta group has a natural action on $H^0(X, L)$ by $\varphi \cdot s = \varphi \circ s \circ t_{-x}$, for every $\varphi \in \G(L)$ covering $t_x$ and $s \in H^0(X, L)$.

The line bundle $L$ has a canonical hermitian fibre metric, given on each fibre $L_x = \{[(v, z)] : z \in \C\}$ by
\[
\ip{[(v, z)], [(v, w)]} = z\bar{w} e^{-\pi H(v, v)}.
\]
The \defn{compact theta group} is the subgroup of elements of $\G(L)$ preserving this metric, denoted $\G_c(L)$.

If $\varphi \in \G(L)$ covers $t_x$, then $x \in K = \Lambda^\perp/\Lambda$. Moreover, the sequence
\[
1 \too \U(1) \too \G_c(L) \too K \too 1
\]
is a central extension with commutator map $\omega(x, y) = \exp(-2\pi i E(x, y))$. It follows that $\G_c(L) \cong \Heis_{\Lambda, \nu}$.

An explicit identification can be obtained as follows. For $(\mu, \alpha) \in \Lambda^\perp \times \U(1) \subset \HH(V)$, let
\[
\varphi_{\mu, \alpha} : V \times \C \too V \times \C, \quad \varphi_{\mu, \alpha}(v, z) = (v - \mu, \alpha e^{\pi(\frac{1}{2}H(\mu, \mu) - H(v, \mu))}z).
\]
We have $\varphi_{\mu, \alpha}(\lambda \cdot (v, z)) = \lambda \cdot \varphi_{\mu, \alpha}(v, z)$ for all $\lambda \in \Lambda$, so that $\varphi_{\mu, \alpha}$ descends to an automorphism
\[
\varphi_{\mu, \alpha} : L \too L.
\]
Moreover,
\[
\varphi_{\mu, \alpha} \circ \varphi_{\tilde{\mu}, \tilde{\alpha}} = \varphi_{(\mu, \alpha)(\tilde{\mu}, \tilde{\alpha})}.
\]
It is also easy to see that $\varphi_{\lambda, \nu(\lambda)^{-1}} = \mathrm{Id}_L$ for all $\lambda \in \Lambda$. Since $\Heis_{\Lambda, \nu} = (\Lambda^\perp \times \U(1))/\Lambda_\nu$, we get a group isomorphism
\[
\Heis_{\Lambda, \nu} \too \G_c(L), \quad [\mu, \alpha] \mtoo \varphi_{\mu, \alpha}
\]
restricting to the identity on $\U(1)$.

\section{Pauli and Clifford groups}\label{tj099yyk}

In this section we recall the Pauli and Clifford groups and develop a basis-independent formulation of them. We first review the standard construction for qudits, mainly to fix the conventions and phases used later. We then show that the same construction can be expressed intrinsically in terms of a finite symplectic abelian group, its Heisenberg group, and the corresponding Stone--von Neumann representation. This point of view removes the dependence on a choice of coordinates or tensor product decomposition, and is the form that will be used for GKP codes and for the theta groups of polarized abelian varieties.

\subsection{The standard Pauli and Clifford groups on qudits}\label{fospnkpi}

For a positive integer $d$, the \defn{qudit} is the Hilbert space $\C^d$ together with its standard orthonormal basis, denoted $\ket{0}, \ket{1}, \ldots, \ket{d-1}$. Define unitary operators
\[
X : \C^d \too \C^d, \qquad X\ket{j} = \ket{j + 1} \text{ for all } j,
\]
where the index $j$ is considered modulo $d$, and
\[
Z : \C^d \too \C^d, \qquad Z\ket{j} = e^{2\pi ij/d} \ket{j} \text{ for all } j.
\]
When $d = 2$, these recover the standard Pauli operators on $\C^2$. For a general $d$, we have
\begin{equation}\label{7r7hxjs8}
    X^d = Z^d = 1, \qquad ZX = e^{2\pi i/d}XZ.
\end{equation}
The \defn{Pauli group} on $\C^d$ \cite{HostensDehaeneDeMoor2005} is the subgroup of $\U(\C^d)$ generated by $X$, $Z$, and $e^{\pi i/d}$, denoted
\[
\Pauli_d \coloneqq \ip{X, Z, e^{\pi i/d}}.
\]
It is a finite group of order $2d^3$.

\begin{remark}
It is also natural to consider the smaller group generated only by $X$ and $Z$, which contains $e^{2\pi i/d}$ but not necessarily $e^{\pi i/d}$. However, it is standard to include $e^{\pi i/d}$ as a generator in order to recover the $Y$-gate when $d = 2$, namely $Y = iXZ = e^{\pi i/2}XZ$. This convention also leads to a more intrinsic definition; see Section \ref{hcdib98d}.
\end{remark}

More generally, let $d_1, \ldots, d_n$ be positive integers and set 
\[
D \coloneqq \diag(d_1, \ldots, d_n).
\]
Consider the system of qudits
\[
\C^{\otimes D} \coloneqq \C^{d_1} \otimes \cdots \otimes \C^{d_n}.
\]
Each factor has its own Pauli operators
\[
X_i = I \otimes \cdots \otimes I \otimes X \otimes I \otimes \cdots \otimes I, \qquad Z_i = I \otimes \cdots \otimes I \otimes Z \otimes I \otimes \cdots \otimes I,
\]
for $i = 1, \ldots, n$, where $X$ and $Z$ appear in the $i$th position. Let
\[
m \coloneqq \operatorname{lcm}(d_1, \ldots, d_n)
\]
be the least common multiple of $d_1, \ldots, d_n$.
Then $m$ is the smallest integer such that the set
\[
\{e^{2\pi i c/m}X_1^{a_1}\cdots X_n^{a_n}Z_1^{b_1}\cdots Z_n^{b_n} : a_i, b_i, c \in \Z\}
\]
is closed under multiplication.
As in the single-qudit case, we add a square root of $e^{2\pi i/m}$ to the set of generators.

\begin{definition}
The \defn{standard Pauli group} associated with $D$ is the group
\[
\Pauli_{D} \coloneqq \ip{X_1, \ldots, X_n, Z_1, \ldots, Z_n, e^{i\pi/m}}
\]
generated by $X_1, \ldots, X_n, Z_1, \ldots, Z_n$, and $e^{i\pi/m}$.
\end{definition}

By \eqref{7r7hxjs8}, every element of $\Pauli_{D}$ can be written uniquely as
\begin{equation}\label{4k8itljs}
e^{c\pi i/m} X_1^{a_1}\cdots X_n^{a_n}Z_1^{b_1}\cdots Z_n^{b_n},
\end{equation}
for $c \in \Z/2m\Z$ and $a_i, b_i \in \Z/d_i\Z$.
In particular, the Pauli group is finite of order
\[
|\Pauli_{D}| = 2m(d_1\cdots d_n)^2.
\]

Consider the standard symplectic abelian group $(K_D, \omega_D)$ as in Section \ref{6kiy7crh}. For all $x = (a, b) \in K_D$, set
\[
P_x \coloneqq X_1^{a_1}\cdots X_n^{a_n}Z_1^{b_1}\cdots Z_n^{b_n}.
\]
We then have the commutator relation
\begin{equation}\label{af95ua00}
P_x P_y = \omega_D(x, y) P_y P_x,    
\end{equation}
for all $x, y \in K_D$.

\begin{definition}
The \defn{standard Heisenberg group} associated with $D$ is the group
\[
\Heis_{D} \coloneqq \{ \alpha X_1^{a_1}\cdots X_n^{a_n}Z_1^{b_1}\cdots Z_n^{b_n} : \alpha \in \U(1) \text{ and } (a, b) \in K_D\} = \U(1)\Pauli_{D}.
\]
\end{definition}

It follows from \eqref{af95ua00} that $\Heis_{D}$ is the Heisenberg group of $(K_D, \omega_D)$. The central extension
\[
\begin{tikzcd}
    1 \arrow{r} & \U(1) \arrow{r} & \Heis_{D} \arrow{r} & K_D \arrow{r} & 1
\end{tikzcd}
\]
is given by the natural inclusion $\U(1) \subset \U(\C^{\otimes D})$ and the map sending $\alpha P_x \in \Heis_{D}$ to $x \in K_D$.
The Stone--von Neumann representation of $\Heis_{D}$ is the inclusion $\Heis_{D} \subset \U(\C^{\otimes D})$.

The Pauli group also has the following intrinsic characterization.

\begin{proposition}\label{b85yc39w}
We have
\begin{equation}\label{vp3muef8}
\Pauli_{D} = \{h \in \Heis_{D} : h^{2m} = 1\}.
\end{equation}
\end{proposition}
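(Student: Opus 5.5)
We prove both inclusions using the normal form \eqref{4k8itljs} and the commutation relation \eqref{af95ua00}. Every element of $\Heis_D$ can be written as $\alpha P_x$ with $\alpha \in \U(1)$ and $x = (a,b) \in K_D$. So the task is to compute $(\alpha P_x)^{2m}$ and show it equals $1$ exactly when $\alpha$ is a power of $e^{i\pi/m}$.

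\emph{Powers of $P_x$.} Write $P_x = X^a Z^b$, with the factors grouped by tensor slot; operators on different slots commute. In a single slot $i$, the relation $ZX = e^{2\pi i/d_i}XZ$ gives
\[
(X^{a_i}Z^{b_i})^k = e^{2\pi i\, a_i b_i \binom{k}{2}/d_i}\, X^{k a_i} Z^{k b_i}.
\]
The sign and orientation of the phase do not matter below. Taking $k = 2m$, we get $X^{2ma_i} = Z^{2mb_i} = 1$ because $d_i \mid m$. The phase is
\[
e^{2\pi i\, a_ib_i\, m(2m-1)/d_i} = 1,
\]
since $m(2m-1)/d_i$ is an integer. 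Hence $P_x^{2m} = 1$ for all $x \in K_D$.

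\emph{Powers of $\alpha P_x$.} Since $\alpha$ is central, $(\alpha P_x)^{2m} = \alpha^{2m}P_x^{2m} = \alpha^{2m}$. Therefore
\[
\{h \in \Heis_D : h^{2m}=1\} = \{\alpha P_x : \alpha^{2m}=1,\ x \in K_D\}.
\]
This set is precisely the set of elements of the form \eqref{4k8itljs}, namely $e^{c\pi i/m}P_x$ with $c \in \Z/2m\Z$.

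\emph{Identifying this set with $\Pauli_D$.} Every such element is a product of the generators $X_i$, $Z_i$ and $e^{i\pi/m}$, so it lies in $\Pauli_D$. Conversely, the right-hand side of \eqref{vp3muef8} contains all the generators: $X_i^{2m} = Z_i^{2m} = 1$ and $(e^{i\pi/m})^{2m} = 1$. It is also closed under multiplication. Indeed, the product $(\alpha P_x)(\beta P_y)$ equals $\alpha\beta\gamma P_{x+y}$, where $\gamma$ is a $d_i$-th root of unity coming from commuting the $Z$'s past the $X$'s. Since $d_i \mid m$, $\gamma$ is an $m$-th root of unity, so $(\alpha\beta\gamma)^{2m} = 1$. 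Hence this set is a subgroup containing the generators, so it contains $\Pauli_D$, and equality follows.

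The main point requiring care is the binomial phase in $(X^{a}Z^{b})^{2m}$. The factor $2$ in the exponent $2m$ is what makes it trivial when some $d_i$ is even: $\binom{2m}{2}/d_i = m(2m-1)/d_i$ is an integer, whereas $\binom{m}{2}/d_i$ need not be. This is also why $e^{i\pi/m}$ is exactly the right scalar to include in $\Pauli_D$.
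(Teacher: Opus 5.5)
Your proof is correct and follows the paper's own argument. The paper also reduces to $h^{2m} = \alpha^{2m}\,\omega_D((0,b),(a,0))^{m(2m-1)} = \alpha^{2m}$, which is your slot-by-slot phase computation with $\binom{2m}{2} = m(2m-1)$. The only difference is that you explicitly check closure under multiplication to identify $\{\alpha P_x : \alpha^{2m}=1\}$ with $\Pauli_D$, whereas the paper takes this from the normal form \eqref{4k8itljs}.
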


\begin{proof}
Let $h = \alpha X_1^{a_1}\cdots X_n^{a_n}Z_1^{b_1}\cdots Z_n^{b_n} \in \Heis_{D}$. By \eqref{af95ua00}, applied recursively with $P_x = P_{(a, 0)}P_{(0, b)}$, we get
\[
h^{2m} = \alpha^{2m} \omega_D((0,b),(a,0))^{m(2m-1)} = \alpha^{2m},
\]
since $\omega_D((0,b),(a,0))^m = 1$. It follows that $\alpha \in \ip{e^{\pi i/m}}$ if and only if $h^{2m} = 1$, proving \eqref{vp3muef8}.
\end{proof}

\begin{definition}
The \defn{standard Clifford group} associated with $D$ is the normalizer of $\Pauli_{D}$ in $\U(\C^{\otimes D})$, denoted 
\[
\Cliff_{D} \coloneqq N_{\U(\C^{\otimes D})}(\Pauli_{D}).
\]
\end{definition}

An element $U \in \Cliff_{D}$ induces an automorphism
\[
\Pauli_{D} \too \Pauli_{D}, \qquad P \mtoo UPU^\dagger.
\]
This automorphism determines $U$ up to a phase:

\begin{proposition}
If $U, V \in \Cliff_{D}$ satisfy $UPU^\dagger = VPV^\dagger$ for all $P \in \Pauli_{D}$, then $U = e^{i\theta}V$ for some $\theta \in \R$.
\end{proposition}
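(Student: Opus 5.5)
The plan is to reduce the claim to Schur's lemma via the irreducibility of the Stone--von Neumann representation. Set $W \coloneqq V^\dagger U$, which is unitary on $\C^{\otimes D}$. The hypothesis $UPU^\dagger = VPV^\dagger$ for all $P \in \Pauli_D$ becomes
\[
WP = PW \quad \text{for all } P \in \Pauli_D,
\]
obtained by multiplying on the left by $V^\dagger$ and on the right by $U$. So $W$ lies in the commutant of $\Pauli_D$.

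Next we pass from $\Pauli_D$ to the full Heisenberg group $\Heis_D = \U(1)\Pauli_D$. Since scalars commute with everything, $W$ commutes with every element of $\Heis_D$. The inclusion $\Heis_D \subset \U(\C^{\otimes D})$ is the Stone--von Neumann representation of $\Heis_D$, hence irreducible, and the centralizer of its image consists of scalars (as recalled in the preliminaries). Alternatively, we can argue directly: $W$ commutes with $Z_1, \ldots, Z_n$, whose joint eigenspaces are the lines spanned by the basis vectors $\ket{j_1}\otimes\cdots\otimes\ket{j_n}$ (distinct joint eigenvalue tuples). Hence $W$ is diagonal in this basis. Commuting with the $X_i$, which act transitively on the basis, then forces all diagonal entries to be equal.

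It follows that $W = c\,\mathrm{Id}$ for some $c \in \C$. Unitarity of $W$ gives $|c| = 1$, so $c = e^{i\theta}$ and $U = VW = e^{i\theta}V$. No step presents a real obstacle. The only point needing care is that irreducibility is stated for $\Heis_D$ rather than for $\Pauli_D$, and adjoining the central $\U(1)$ does not change the commutant.
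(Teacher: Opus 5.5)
Your proof is correct and follows the paper's own argument. The paper derives this from exactness at $\mathcal{C}$ in Proposition~\ref{94mzk44o}: an operator commuting with all of $\H$ is a scalar by irreducibility of the Stone--von Neumann representation and Schur's lemma. That is exactly what you apply to $V^\dagger U$, after correctly noting that agreeing on $\Pauli_D$ is the same as agreeing on $\Heis_D = \U(1)\Pauli_D$. Your direct argument, using the joint eigenbasis of the $Z_i$ and the transitive action of the $X_i$, is also valid; it just proves that Schur-type step by hand.
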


The proof will be given later in a more abstract setting; see Proposition \ref{94mzk44o}.
A Clifford gate $U \in \Cliff_D$ also determines a map
\[
\bar{U} : K_D \too K_D
\]
such that
\[
UP_xU^\dagger = (\mathrm{phase})P_{\bar{U}x}
\]
for all $x \in K_D$. One checks that
\[
\omega_D(\bar{U}x, \bar{U}y) = \omega_D(x, y)
\]
for all $x, y \in K_D$, i.e.\ $\bar{U}$ is an element of the symplectic group
\[
\Sp(K_D) \coloneqq \{\varphi \in \Aut(K_D) : \omega_D(\varphi x, \varphi y) = \omega_D(x, y) \text{ for all } x, y \in K_D\}.
\]
It can be shown that this map determines $U$ up to a phase and a Pauli gate and, moreover, that every element of $\Sp(K_D)$ is obtained in this way; see Proposition \ref{xor7ubv5}. In other words:

\begin{theorem}
We have
\[
\Cliff_{D}/\Heis_{D} \cong \Sp(K_D).
\]
\end{theorem}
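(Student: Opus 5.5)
We will construct a group homomorphism $\Phi : \Cliff_D \to \Sp(K_D)$, $U \mapsto \bar{U}$, and show that it is surjective with kernel $\Heis_D$. For well-definedness, let $U \in \Cliff_D$ and $x \in K_D$. Then $UP_xU^\dagger \in \Pauli_D \subset \Heis_D$, so it has the form $\alpha P_y$ for a unique $y \in K_D$, since $\Heis_D \to K_D$ has kernel $\U(1)$. Set $\bar{U}x \coloneqq y$. Equivalently, conjugation by $U$ preserves the center $\U(1)$ of $\Heis_D$ pointwise, so it induces an automorphism of the quotient $\Heis_D/\U(1) = K_D$. This description shows immediately that $\bar U$ is a group automorphism and that $\overline{UV} = \bar U \bar V$. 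Conjugating the commutator relation \eqref{af95ua00} by $U$ gives $\omega_D(\bar Ux, \bar Uy) = \omega_D(x,y)$, because the commutator of $\alpha P_{\bar Ux}$ and $\beta P_{\bar U y}$ is $\omega_D(\bar Ux, \bar Uy)$, while conjugation fixes the scalar $\omega_D(x,y)$. Hence $\Phi$ lands in $\Sp(K_D)$.

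\emph{Kernel.} Clearly $\Heis_D \subset \Cliff_D$: conjugating $P_y$ by $\alpha P_x$ gives $\omega_D(x,y)P_y$, which lies in $\Pauli_D$ because $\omega_D$ takes values in the $m$-th roots of unity. Moreover $\Phi(\alpha P_x) = \mathrm{id}$. Conversely, suppose $\bar U = \mathrm{id}$. Then $UP_xU^\dagger = \chi(x)P_x$ for scalars $\chi(x)$. Since the $P_x$ multiply up to scalars that are fixed by conjugation, $\chi : K_D \to \U(1)$ is a character. By the nondegeneracy of $\omega_D$, every character of $K_D$ has the form $\omega_D(z,\cdot)$ for some $z \in K_D$. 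So $U$ and $P_z$ induce the same conjugation on all $P_x$, and hence on $\Pauli_D$. The operator $P_z^\dagger U$ then commutes with $\Heis_D$, whose Stone--von Neumann representation is irreducible, so by Schur's lemma $U = e^{i\theta}P_z \in \Heis_D$.

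\emph{Surjectivity.} This is the main step. Given $\varphi \in \Sp(K_D)$, we want a unitary $U$ with $UP_xU^\dagger \in \U(1)P_{\varphi x}$, and in fact with $UP_xU^\dagger \in \Pauli_D$. The plan is to lift $\varphi$ to an automorphism $\tilde\varphi$ of $\Heis_D$ that is the identity on $\U(1)$ and preserves $\Pauli_D$. For this we define $\tilde\varphi(\alpha P_x) = \alpha c(x) P_{\varphi x}$, where $c : K_D \to \U(1)$ is chosen so that $\tilde\varphi$ is a homomorphism. The cocycle of $\Heis_D$ with respect to the section $x \mapsto P_x$ is $\beta(x,y)$, defined by $P_xP_y = \beta(x,y)P_{x+y}$. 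The required condition is that $c(x)c(y)/c(x+y) = \beta(x,y)/\beta(\varphi x,\varphi y)$. The right-hand side is a symmetric $2$-cocycle, because the two cocycles have the same commutator, and over $\U(1)$, which is divisible, symmetric cocycles on a finite abelian group are coboundaries. So such a $c$ exists.

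Once $\tilde\varphi$ exists, we compose the Stone--von Neumann representation with $\tilde\varphi$. This gives an irreducible representation on which the center acts by scalars, so by uniqueness it is unitarily equivalent to the original one: there is a unitary $U$ with $UhU^\dagger = \tilde\varphi(h)$ for all $h \in \Heis_D$.

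It remains to ensure that $U$ normalizes $\Pauli_D$. By Proposition \ref{b85yc39w}, $\Pauli_D = \{h \in \Heis_D : h^{2m}=1\}$, and any group automorphism of $\Heis_D$ preserves this set. Hence $U \in \Cliff_D$ and $\bar U = \varphi$.

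The only delicate point is the existence of the correction $c$. If one prefers to avoid cohomology, one can instead build $U$ directly on generators of $\Sp(K_D)$, using Fourier, phase and SUM-type gates, in the spirit of Lemma \ref{b47u02uc}. The cohomological argument, however, is cleaner and basis-free, which is why the paper defers the proof to the abstract Proposition \ref{xor7ubv5}.
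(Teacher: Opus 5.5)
Your proof is correct and follows essentially the same route as the paper, which deduces the statement from Propositions \ref{94mzk44o} and \ref{xor7ubv5}: Schur's lemma identifies the kernel, and surjectivity comes from lifting a symplectic automorphism of $K_D$ to an automorphism of the Heisenberg group fixing $\U(1)$ and then realizing that automorphism by a unitary via Stone--von Neumann. The only difference is that you prove the lifting step yourself, using $\mathrm{Ext}^1(K_D,\U(1))=0$ because $\U(1)$ is divisible, and you compute the kernel directly, whereas the paper cites \cite[Lemma 6.6]{BirkenhakeLange2004ComplexAbelianVarieties} for the exactness of $1 \to K \to \Aut_{\U(1)}(\H) \to \Sp(K) \to 1$ and concludes by a diagram chase.
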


\subsection{The abstract Pauli and Clifford groups}\label{hcdib98d}

The concepts and results on Pauli and Clifford groups just recalled admit a more general formulation that does not rely on choosing a decomposition of the qudit system into factors $\C^{d_i}$, nor on decomposing $K$ as $(\Z^n/D\Z^n) \oplus (\Z^n/D\Z^n)$. This is analogous to abstract linear algebra: although every finite-dimensional complex vector space is isomorphic to $\C^n$ for some $n$, it is nonetheless preferable to define vector spaces and linear maps in a basis-independent manner. In the same spirit, we introduce Pauli and Clifford groups associated with a finite symplectic abelian group $(K, \omega)$ and a Heisenberg group over it. The concrete description above is then recovered after making certain noncanonical choices. A closely related approach appears in \cite{van2013efficient}, which can be interpreted as the special case of the present formalism where $K = G \oplus \widehat{G}$, for $G$ a finite abelian group, equipped with its canonical symplectic form.

Let $(K, \omega)$ be a finite symplectic abelian group with Heisenberg group $\H$, and let $m$ be the exponent of $K$. 

\begin{definition}
The \defn{(abstract) Pauli group} associated to $\H$ is
\[
\P \coloneqq \{h \in \H : h^{2m} = 1\}.
\]
\end{definition}

\begin{proposition}
The group $\P$ is a finite subgroup of $\H$ of order $2m|K|$.
\end{proposition}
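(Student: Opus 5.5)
The plan is to reduce to the standard model and invoke Proposition \ref{b85yc39w}; afterwards I sketch a direct argument that avoids choosing coordinates. First I check that $\P$ is a subgroup. This is not automatic, since $\H$ is nonabelian. For $g,h \in \H$ with images $x,y\in K$, we have $gh = \omega(x,y)hg$, where $\omega(x,y)$ is central. Repeated commutation gives
\[
(gh)^{2m} = \omega(y,x)^{m(2m-1)} g^{2m}h^{2m}.
\]
Since $mx = 0$ and $\omega$ is bimultiplicative, $\omega(y,x)^m = \omega(y,mx) = 1$. Hence $(gh)^{2m} = g^{2m}h^{2m}$, so $\P$ is closed under products. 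Inverses are clear, so $\P$ is a subgroup.

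For the order, I use Lemma \ref{b47u02uc} to fix an isomorphism $(K,\omega)\cong(K_D,\omega_D)$. The trivial case $K=0$ gives $\H=\U(1)$, $m=1$, and $\P=\{\pm1\}$, of order $2 = 2m|K|$. Next, uniqueness of Heisenberg groups up to isomorphism of central extensions gives an isomorphism $\H\cong\Heis_D$ that is the identity on $\U(1)$. Such an isomorphism preserves the condition $h^{2m}=1$. The exponent $m$ of $K$ equals $\operatorname{lcm}(d_1,\ldots,d_n)$, so by Proposition \ref{b85yc39w} it carries $\P$ onto $\Pauli_D$. The latter has order $2m(d_1\cdots d_n)^2 = 2m|K|$, which finishes the count.

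Alternatively, one can count directly, without choosing coordinates. Consider the projection $\P\to K$; I claim it is surjective with kernel $\{\alpha\in\U(1):\alpha^{2m}=1\}$, a group of order $2m$. The kernel statement is immediate. For surjectivity, take any lift $h$ of $x\in K$. Then $h^{2m}$ lies over $2mx=0$, so $h^{2m}=\beta\in\U(1)$. Choose $\alpha$ with $\alpha^{2m}=\beta^{-1}$; since $\alpha$ is central, $(\alpha h)^{2m}=\alpha^{2m}h^{2m}=1$. This gives $|\P| = 2m|K|$, and finiteness follows.

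The only delicate point is the power identity $(gh)^{2m}=g^{2m}h^{2m}$, where the parity factor $m(2m-1)$ must be handled correctly. This identity is exactly where the choice of exponent $2m$ rather than $m$ matters. With exponent $m$, the commutator factor becomes $\omega(y,x)^{m(m-1)/2}$, which need not be $1$ when $m$ is even, so the set would fail to be a subgroup.
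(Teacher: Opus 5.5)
Your main argument is the paper's proof: Lemma \ref{b47u02uc} gives $(K,\omega)\cong(K_D,\omega_D)$, this lifts to $\H\cong\Heis_D$, and Proposition \ref{b85yc39w} identifies $\P$ with $\Pauli_D$, of order $2m(d_1\cdots d_n)^2=2m|K|$. Your separate treatment of $K=0$, which Lemma \ref{b47u02uc} excludes, is a small improvement on the paper.

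Your direct count is also correct, and it is genuinely coordinate-free. The identity $(gh)^{2m}=g^{2m}h^{2m}$ is redundant for the reduction, since $\Pauli_D$ is a group by definition, but here it makes $\P\to K$ a homomorphism. Its kernel is the group of $2m$-th roots of unity. Surjectivity follows by rescaling a lift $h$ by a central $\alpha$ with $\alpha^{2m}=h^{-2m}$. This route avoids both the structure lemma and the uniqueness of Heisenberg groups, so it fits the basis-independent aims of Section \ref{hcdib98d} better than the paper's reduction.
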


\begin{proof}
By Lemma \ref{b47u02uc}, we have an isomorphism $(K, \omega) \cong (K_D, \omega_D)$ for some $D = \diag(d_1, \ldots, d_n)$, and hence an isomorphism $\H \cong \Heis_D$ of central extensions. It follows from Proposition \ref{b85yc39w} that $\P$ corresponds to the standard Pauli group $\Pauli_D$ under this isomorphism. In particular, $\P$ is a finite subgroup of order $2m(d_1\cdots d_n)^2 = 2m|K|$.
\end{proof}

Let
\[
\rho : \H \too \U(W)
\]
be the Stone--von Neumann representation of $\H$. Since $\rho$ is faithful, we identify $\H$ with its image in $\U(W)$.

\begin{definition}
The \defn{(abstract) Clifford group} associated to $(K, \omega)$ is the normalizer
\[
\mathcal{C} \coloneqq N_{\U(W)}(\H)
\]
of $\H$ in $\U(W)$.
\end{definition}

Since $\U(1) \subset \H$ acts by scalars, $\mathcal{C}$ can equivalently be described as the normalizer of the Pauli group $\P \subset \H$.

Let $U \in \mathcal{C}$. Then conjugation by $U$ induces an automorphism $\varphi_U : \H \to \H$ restricting to the identity on $\U(1)$. Let $\Aut_{\U(1)}(\H)$ be the group of automorphisms of $\H$ restricting to the identity on $\U(1)$. We then have a sequence
\begin{equation}\label{m9h2wajm}
\begin{tikzcd}[row sep=0pt]
1 \arrow{r} & \U(1) \arrow{r} & \mathcal{C} \arrow{r} & \Aut_{\U(1)}(\H) \arrow{r} & 1. \\
& & U \arrow[mapsto]{r} & \varphi_U
\end{tikzcd}
\end{equation}

\begin{proposition}\label{94mzk44o}
The sequence \eqref{m9h2wajm} is exact.
\end{proposition}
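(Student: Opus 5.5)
Exactness has three parts: the map $U \mapsto \varphi_U$ is a homomorphism, its kernel is $\U(1)$, and it is surjective.

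\emph{Homomorphism.} First, $\varphi_U$ really lies in $\Aut_{\U(1)}(\H)$: conjugation by the unitary $U$ preserves $\H \subset \U(W)$ by definition of the normalizer, and it fixes scalars. Second, $\varphi_{UV} = \varphi_U \circ \varphi_V$, so the map is a homomorphism.

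\emph{Exactness at $\U(1)$ and at $\mathcal{C}$.} Scalars lie in $\mathcal{C}$ and act trivially by conjugation, so $\U(1)$ is contained in the kernel. Conversely, if $\varphi_U = \mathrm{id}$, then $U$ commutes with every element of $\rho(\H)$. The Stone--von Neumann theorem as recalled says the centralizer of $\rho(\H)$ consists of scalars, so $U \in \U(1)$. Injectivity of $\U(1) \to \mathcal{C}$ is clear.

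\emph{Surjectivity.} This is the main point. Given $\varphi \in \Aut_{\U(1)}(\H)$, consider the twisted representation $\rho \circ \varphi : \H \to \U(W)$.
\begin{itemize}
\item It is unitary, since it takes values in $\U(W)$.
\item It is irreducible, since its image $\rho(\varphi(\H)) = \rho(\H)$ is the same set of operators as for $\rho$, so the invariant subspaces coincide.
\item The central $\U(1)$ acts by scalar multiplication, because $\varphi$ is the identity on $\U(1)$.
\end{itemize}
By the uniqueness part of the Stone--von Neumann theorem, $\rho \circ \varphi \cong \rho$ as unitary representations. Hence there is an intertwiner, which can be taken to be unitary, $U \in \U(W)$ with
\[
U \rho(h) U^{-1} = \rho(\varphi(h)) \quad \text{for all } h \in \H.
\]
One should check that the isomorphism furnished by the theorem can indeed be taken unitary. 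This holds because for finite-dimensional irreducible unitary representations, any intertwiner is a scalar multiple of a unitary, by Schur's lemma applied to $A^*A$. Then $U$ normalizes $\H$, so $U \in \mathcal{C}$, and $\varphi_U = \varphi$ by construction, using that $\rho$ is faithful to identify $\H$ with its image.

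The only delicate step is this application of the uniqueness in the Stone--von Neumann theorem, together with the unitarity of the intertwiner. Everything else is formal.
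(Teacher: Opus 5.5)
Your proposal is correct and follows the same route as the paper's proof: Schur's lemma (the centralizer of $\rho(\H)$ is scalar) for exactness at $\mathcal{C}$, and uniqueness in the Stone--von Neumann theorem applied to the twisted representation $\rho\circ\varphi$ for surjectivity. Your extra checks are valid: irreducibility of $\rho\circ\varphi$ via equality of images, and rescaling an intertwiner to a unitary via Schur applied to $A^*A$. Your intertwiner relation $U\rho(h)U^{-1}=\rho(\varphi(h))$ yields $\varphi_U=\varphi$ exactly, whereas the relation displayed in the paper literally gives $\varphi_U=\varphi^{-1}$; this is harmless, since $\varphi$ is arbitrary.
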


\begin{proof}
To show exactness at $\mathcal{C}$, let $U \in \mathcal{C}$ be such that $\varphi_U = \mathrm{Id}_\H$. Then $U$ commutes with every element of $\H$, so by irreducibility and Schur's lemma, $U \in \U(1)$.

For surjectivity, let $\varphi \in \Aut_{\U(1)}(\H)$. Then $\rho \circ \varphi$ is another irreducible unitary representation of $\H$ on which $\U(1)$ acts by scalar multiplication. By the Stone--von Neumann theorem, there exists $U \in \U(W)$ such that $U \rho(\varphi(h)) = \rho(h)U$ for all $h \in \H$. It follows that $U \in \mathcal{C}$ and $\varphi_U = \varphi$.
\end{proof}

Since $\H/\U(1) \cong K$, every automorphism $\varphi_U$ descends to an automorphism of $K$ preserving the symplectic form $\omega$. Let $\Sp(K)$ be the group of automorphisms of $K$ preserving $\omega$. We obtain a sequence
\begin{equation}\label{rnumb4qb}
1 \too \H \too \mathcal{C} \too \Sp(K) \too 1.
\end{equation}

\begin{proposition}\label{xor7ubv5}
The sequence \eqref{rnumb4qb} is exact.
\end{proposition}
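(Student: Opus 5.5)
The plan is to deduce exactness of \eqref{rnumb4qb} from Proposition \ref{94mzk44o}, which identifies $\mathcal{C}/\U(1)$ with $\Aut_{\U(1)}(\H)$. It then suffices to analyse the natural map $r : \Aut_{\U(1)}(\H) \to \Sp(K)$ obtained by descending an automorphism along $\H \to \H/\U(1) = K$. Such an automorphism preserves $\omega$, since $\omega$ is computed from commutators in $\H$ and $\varphi$ fixes $\U(1)$ pointwise. Exactness at $\H$ is clear: $\H \to \mathcal{C}$ is the inclusion, hence injective. The remaining points are exactness at $\mathcal{C}$ and surjectivity onto $\Sp(K)$.

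\textbf{Exactness at $\mathcal{C}$.} If $h \in \H$, then $\varphi_h(g) = hgh^{-1} = \omega(\bar h, \bar g)\, g$, which acts trivially on $K$, so $\H$ lies in the kernel. Conversely, suppose $U \in \mathcal{C}$ induces the identity on $K$. Then $\varphi_U(g) = \chi(\bar g)\, g$ for a function $\chi : K \to \U(1)$, and $\chi$ is a character because $\U(1)$ is central. Since $\omega$ is non-degenerate and $K$ is finite, the map $K \to \widehat{K}$, $k \mapsto \omega(k,\cdot)$, is injective and hence bijective. So $\chi = \omega(\bar h, \cdot)$ for some $h \in \H$, which gives $\varphi_U = \varphi_h$. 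By Proposition \ref{94mzk44o}, $U \in \U(1)h \subset \H$.

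\textbf{Surjectivity.} This is the main step. Given $\sigma \in \Sp(K)$, we must lift it to an automorphism of $\H$ that is the identity on $\U(1)$. The pullback extension $\sigma^*\H$ is again a central extension of $K$ by $\U(1)$ with commutator map $\omega \circ (\sigma \times \sigma) = \omega$. By the uniqueness of Heisenberg groups up to isomorphism of central extensions, there is an isomorphism $\sigma^*\H \cong \H$ over $K$. Composing it with the canonical map $\sigma^*\H \to \H$ covering $\sigma$ yields $\varphi \in \Aut_{\U(1)}(\H)$ covering $\sigma$. Proposition \ref{94mzk44o} then produces $U \in \mathcal{C}$ with $\varphi_U = \varphi$.

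The uniqueness statement was only asserted earlier, so I would also verify it directly. Since $\U(1)$ is divisible, $\operatorname{Ext}(K,\U(1)) = 0$, so central extensions of the finite abelian group $K$ by $\U(1)$ are classified by their commutator pairing. Concretely, I would choose a set-theoretic section and show that two extensions with equal commutator maps differ by a symmetric $2$-cocycle, which is a coboundary because $\U(1)$ is injective.

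Alternatively, one can work in coordinates. By Lemma \ref{b47u02uc}, reduce to $\Heis_D$ and define $\varphi(\alpha P_x) = \alpha c(x) P_{\sigma x}$. The multiplicativity condition then becomes a requirement on $c$ modulo a $2$-coboundary, and this is solvable for the same reason.
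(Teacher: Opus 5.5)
Your proof is correct and follows the same structure as the paper's. The paper combines Proposition \ref{94mzk44o} with a $3\times 3$ diagram chase and simply cites Birkenhake--Lange for exactness of $1 \to K \to \Aut_{\U(1)}(\H) \to \Sp(K) \to 1$, whereas you prove that bottom row directly: the kernel via the isomorphism $K \cong \widehat{K}$ coming from non-degeneracy of $\omega$, and surjectivity via uniqueness of central extensions with a given commutator form (that is, $\operatorname{Ext}(K,\U(1))=0$). This makes your argument self-contained.
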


\begin{proof}
It is known (see e.g.\ \cite[Lemma 6.6]{BirkenhakeLange2004ComplexAbelianVarieties}) that the bottom row of the commutative diagram
\[
\begin{tikzcd}
& 1 \arrow{d} & 1 \arrow{d} & 1 \arrow{d} \\
1 \arrow{r} & \U(1) \arrow{d} \arrow[equals]{r} & \U(1) \arrow{d} \arrow{r} & 1 \arrow{d} \arrow{r} & 1 \\
1 \arrow{r} & \H \arrow{r} \arrow{d} & \mathcal{C} \arrow{r} \arrow{d} & \Sp(K) \arrow{r}\arrow[equals]{d} & 1 \\
1 \arrow{r} & K \arrow{r} \arrow{d} & \Aut_{\U(1)}(\H) \arrow{d} \arrow{r} & \Sp(K) \arrow{r} \arrow{d} & 1 \\
& 1 & 1 & 1
\end{tikzcd}
\]
is exact. Since all columns and the top row are exact, the middle row is also exact.
\end{proof}

As in the standard case, we therefore have
\[
\mathcal{C}/\H \cong \Sp(K).
\]

We now compare the abstract construction with the standard one.

\begin{proposition}\label{3ujxcjsa}
Let $(K, \omega)$ be a finite symplectic abelian group with Heisenberg group $\H$, and let $W$ be the Stone--von Neumann representation of $\H$.
There exist integers $d_1, \ldots, d_n \ge 2$ and an isomorphism
\begin{equation}\label{fejdalwy}
(K, \omega) \cong (K_D, \omega_D)
\end{equation}
of symplectic abelian groups, where $D = \diag(d_1, \ldots, d_n)$. Any such isomorphism lifts to an isomorphism
\begin{equation}\label{n0nrk8uq}
\Heis_D \cong \H
\end{equation}
restricting to the identity on $\U(1)$.
Given \eqref{n0nrk8uq}, there is a unitary isomorphism
\begin{equation}\label{mm1wz5yh}
F : \C^{\otimes D} \overset{\cong}{\too} W,
\end{equation}
unique up to a global $\U(1)$-scalar, such that the map
\begin{equation}\label{qx3xvyk0}
\U(\C^{\otimes D}) \too \U(W), \quad U \mtoo F \circ U \circ F^{-1}
\end{equation}
restricts to \eqref{n0nrk8uq}. In that case, \eqref{qx3xvyk0} also restricts to group isomorphisms
\[
\Pauli_D \cong \P \quad\text{and}\quad \Cliff_D \cong \mathcal{C},
\]
where $\P$ and $\mathcal{C}$ are the Pauli and Clifford groups associated to $(\H,W)$.
\end{proposition}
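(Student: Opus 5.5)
The plan has four steps: existence of the isomorphism \eqref{fejdalwy}, the lift \eqref{n0nrk8uq}, the intertwiner \eqref{mm1wz5yh}, and the restriction statements. Existence of \eqref{fejdalwy} is exactly Lemma \ref{b47u02uc} when $K$ is nontrivial. If $K$ is trivial, we take $n=0$, or equivalently use $d_i=1$ factors, which give the trivial system $\C$.

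For the lift \eqref{n0nrk8uq}, we use that $\Heis_D$ is a Heisenberg group over $(K_D,\omega_D)$, by \eqref{af95ua00}. Pulling back the central extension $\H \to K$ along the given isomorphism $K_D \cong K$ produces a second Heisenberg group over $(K_D,\omega_D)$. Heisenberg groups over a given symplectic abelian group are unique up to isomorphism of central extensions, so there is an isomorphism $\Heis_D \cong \H$ that is the identity on $\U(1)$ and covers the given map on $K$.

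For completeness I would indicate briefly why this uniqueness holds in the finite case. Choose a set-theoretic section $x\mapsto h_x$. The two extensions then differ by a $2$-cocycle $c : K\times K\to \U(1)$ with symmetric commutator, $c(x,y)=c(y,x)$. Since $\U(1)$ is divisible, $\operatorname{Ext}(K,\U(1))=0$, so such an abelian extension splits and $c$ is a coboundary. Adjusting the section then produces the isomorphism. This is the only point that needs genuine justification beyond the earlier results, and it is the step I expect to be the main obstacle. It is standard, and I would cite it, e.g.\ \cite{BirkenhakeLange2004ComplexAbelianVarieties,Mumford-Tata-III}, or give the one-line divisibility argument above.

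For \eqref{mm1wz5yh}, consider two representations of $\Heis_D$: the inclusion $\Heis_D\subset \U(\C^{\otimes D})$, and the composite $\Heis_D\cong\H\xrightarrow{\rho}\U(W)$. Both are irreducible unitary representations in which $\U(1)$ acts by scalars. By the Stone--von Neumann theorem there is a unitary $F$ intertwining them, i.e.\ $F h F^{-1}=\rho(\psi(h))$, where $\psi : \Heis_D\to\H$ denotes \eqref{n0nrk8uq}. This says precisely that conjugation by $F$ restricts to \eqref{n0nrk8uq} on $\Heis_D$. If $F'$ is a second such unitary, then $F^{-1}F'$ commutes with all of $\Heis_D$, so by irreducibility and Schur's lemma it is a scalar in $\U(1)$. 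This gives uniqueness up to a global phase.

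For the restriction statements, conjugation by $F$ is a group isomorphism $\U(\C^{\otimes D})\to\U(W)$ that carries $\Heis_D$ onto $\H$. Proposition \ref{b85yc39w} characterizes $\Pauli_D$ as $\{h\in\Heis_D : h^{2m}=1\}$. The abstract group $\P$ is defined by the same condition $h^{2m}=1$ in $\H$, and both sides use the same $m$: by Lemma \ref{jiizju0l}-type reasoning, $\exp(K_D)=\operatorname{lcm}(d_i)=\exp(K)$. Since the condition $h^{2m}=1$ is preserved by group isomorphisms, conjugation by $F$ maps $\Pauli_D$ onto $\P$. A group isomorphism also carries the normalizer of a subgroup onto the normalizer of its image, so $N_{\U(\C^{\otimes D})}(\Pauli_D)$ maps onto $N_{\U(W)}(\P)$. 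By the remark following the definition of $\mathcal{C}$, this normalizer equals $\mathcal{C}$, which gives $\Cliff_D\cong\mathcal{C}$.
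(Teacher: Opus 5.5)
Your proposal is correct and follows the same route as the paper's proof: Lemma \ref{b47u02uc} for existence, uniqueness of Heisenberg extensions to lift the isomorphism, Stone--von Neumann with Schur's lemma for $F$ and its uniqueness up to $\U(1)$, and transport of the intrinsic characterizations of $\Pauli_D$ and $\Cliff_D$ under conjugation by $F$. You also supply details the paper leaves implicit, namely the divisibility argument giving $\operatorname{Ext}(K,\U(1))=0$ and the equality $\exp(K_D)=\exp(K)$. One small correction: in the trivial case you should take $n=0$ rather than factors with $d_i=1$, since the statement requires $d_i\ge 2$.
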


\begin{proof}
The existence of the integers $d_1, \ldots, d_n$ is Lemma \ref{b47u02uc}. Since $\H$ and $\Heis_D$ are Heisenberg groups over isomorphic finite symplectic abelian groups, the isomorphism \eqref{fejdalwy} lifts to an isomorphism \eqref{n0nrk8uq} of central extensions. Finally, since $W$ and $\C^{\otimes D}$ are irreducible unitary representations on which $\U(1)$ acts by scalar multiplication, the Stone--von Neumann theorem gives a unitary isomorphism $F : \C^{\otimes D} \to W$, unique up to a global scalar, intertwining the two representations. The induced map \eqref{qx3xvyk0} therefore identifies the corresponding Pauli and Clifford groups.
\end{proof}

\begin{remark}
The isomorphism \eqref{mm1wz5yh} can be made more explicit as follows.
Let $\Phi : K_D \to K$ be the chosen isomorphism of symplectic abelian groups, and let $e_i, f_i \in K_D$ be the standard generators corresponding to the two factors of $(\Z^n/D\Z^n) \oplus (\Z^n/D\Z^n)$. Choose lifts $\mathcal{X}_i, \mathcal{Z}_i \in \H$ of $\Phi(e_i)$ and $\Phi(f_i)$, respectively. After multiplying each lift by a suitable scalar, we may assume that $\mathcal{X}_i^{d_i} = \mathcal{Z}_i^{d_i} = 1$ for all $i$. 
These choices determine an isomorphism $\Heis_D \cong \H$ by sending
\[
\alpha X_1^{a_1}\cdots X_n^{a_n}Z_1^{b_1}\cdots Z_n^{b_n}
\mtoo
\alpha \mathcal{X}_1^{a_1}\cdots \mathcal{X}_n^{a_n}\mathcal{Z}_1^{b_1}\cdots \mathcal{Z}_n^{b_n}.
\]
Now, there exists a unit norm vector $w_0 \in W_0$, unique up to scaling, such that $\mathcal{Z}_iw_0 = w_0$ for all $i$. Then the map
\[
\C^{\otimes D} \too W, \quad \ket{b} \mtoo \mathcal{X}_1^{b_1}\cdots \mathcal{X}_n^{b_n}w_0,
\]
where $\ket{b} = \ket{b_1} \otimes \cdots \otimes \ket{b_n}$ and $b_i \in \Z/d_i\Z$, is the desired isomorphism. We do not need this result in the present paper, so we leave the details to the reader.
\end{remark}

\section{Definition of GKP codes}\label{ltg2p7xl}

The goal of this section is to give a precise mathematical definition of GKP codes in the language developed above. We begin by recalling the general structure of a quantum error-correcting code: a logical space, a physical space, an encoding, logical gates, and an error-correction procedure. We then specialize this discussion to the Schr\"odinger representation of the real Heisenberg group. In this setting, a GKP code is determined by a symplectically integral lattice $\Lambda$ together with a semicharacter $\nu$, and its code space is the simultaneous eigenspace of the corresponding translation operators in $\S'(\R^n)$. Finally, using the Bargmann transform, we identify this code space with the space of canonical theta functions and reformulate the construction geometrically in terms of polarized complex abelian varieties.

\subsection{Quantum error correction}

In quantum computation, information is represented by states in a finite-dimensional Hilbert space $\mathscr{H}_{\mathrm{logical}}$, while algorithms are modeled by sequences of unitary operators on $\mathscr{H}_{\mathrm{logical}}$, called logical gates, together with measurements. In practice, one often works with systems equipped with a tensor product decomposition
\[
\mathscr{H}_{\mathrm{logical}} = \C^{d_1} \otimes \cdots \otimes \C^{d_n},
\]
corresponding to a collection of qudits, most commonly qubits, for which $d_i = 2$. Standard gate sets include, for instance, Pauli and Clifford gates, together with more general unitary operations.

A central challenge is to realize $\mathscr{H}_{\mathrm{logical}}$ and its operations inside a physical system in a manner that is robust against noise. The purpose of quantum error correction is to do this by encoding the logical Hilbert space into a physical Hilbert space $\mathscr{H}_{\mathrm{physical}}$. Thus one considers a linear embedding
\begin{equation}\label{s9yrqkp6}
\operatorname{Enc} : \mathscr{H}_{\mathrm{logical}} \too \mathscr{H}_{\mathrm{physical}},
\end{equation}
whose image
\[
\mathscr{H}_{\mathrm{code}} \coloneqq \operatorname{Enc}(\mathscr{H}_{\mathrm{logical}})
\]
is called the \defn{code space}. Ideally, such an encoding should satisfy the following properties:
\begin{enumerate}[label={(P\arabic*)}]
    \item \label{0n07smer} (\emph{Isometric encoding}) The embedding \eqref{s9yrqkp6} is an isometry.
    
    \item \label{adfuq8hq} (\emph{Logical gate implementation}) A sufficiently large and relevant class of logical gates on $\mathscr{H}_{\mathrm{logical}}$ can be implemented by physically realizable unitary operators on $\mathscr{H}_{\mathrm{physical}}$ that preserve $\mathscr{H}_{\mathrm{code}}$.
    
    \item \label{bd6ih4k1} (\emph{Error correction}) If a sufficiently small noise process acts on a state $\psi \in \mathscr{H}_{\mathrm{code}}$, producing a perturbed state, then one can perform a measurement on $\mathscr{H}_{\mathrm{physical}}$, followed by a suitable unitary correction, so as to recover the original state $\psi$.
\end{enumerate}

For GKP codes, this general picture has to be interpreted with some care. The ideal physical space is naturally taken to be $\mathscr{H}_{\mathrm{physical}} = \mathcal{S}'(\R^n)$, since the ideal encoded states are typically not square-integrable. However, $\mathcal{S}'(\R^n)$ is not a Hilbert space, so the usual isometry requirement does not directly apply. We will therefore replace it by an asymptotic isometry condition, while keeping the same overall structure of logical space, physical space, encoding, gates, and error correction. These three properties will be treated in Sections \ref{b9j929je}, \ref{5nk36oj5}, and \ref{k3dqdqez}, respectively.

\subsection{GKP codes}

Let $(V,E)$ be a real symplectic vector space of dimension $2n$, together with an identification $V \cong \R^{2n}$ as in Section \ref{4c5kvuhw}. We use this identification to realize the real Heisenberg group $\HH(V)$ on $L^2(\R^n)$ by the Schr\"odinger representation, with translation operators $T_v$ as in \eqref{o88gpt48}.

The construction of GKP codes \cite{GKP,royer2022encoding,conrad2022gottesman} is modeled on the stabilizer formalism from quantum error correction \cite{GottesmanThesis,NielsenChuang}. For our purposes, this means that one seeks a code space as a simultaneous eigenspace of a family of commuting unitary operators. In the present setting, the relevant operators are the translations $T_v$. By the commutation relation \eqref{onkixs2f}, the translations $T_\lambda$ and $T_\mu$ commute precisely when $E(\lambda,\mu) \in \Z$. Thus the natural indexing set for such a commuting family is a symplectically integral lattice $\Lambda \subset V$.

One must also prescribe the corresponding eigenvalues. Suppose that a state $\psi$ satisfies $T_\lambda \psi = \nu(\lambda)\psi$ for all $\lambda \in \Lambda$. Since
\[
T_\lambda T_\mu = e^{-\pi i E(\lambda,\mu)}T_{\lambda+\mu},
\]
the function $\nu : \Lambda \to \U(1)$ must satisfy
\[
\nu(\lambda + \mu) = \nu(\lambda)\nu(\mu)e^{i\pi E(\lambda,\mu)}.
\]
Thus the eigenvalue data are exactly a semicharacter for $(\Lambda,E)$.

There is, however, one important point. In general, the simultaneous eigenspace cut out by the equations $T_\lambda \psi = \nu(\lambda)\psi$ for all $\lambda \in \Lambda$ does not lie in $L^2(\R^n)$. To obtain the ideal code states, one must therefore solve these equations in the larger space $\S'(\R^n)$ of tempered distributions. These states should be understood as idealized limits of physical, normalizable states. This leads to the following definition.

\begin{definition}
A \defn{GKP code} is a pair $(\Lambda, \nu)$ where $\Lambda \subset V$ is a symplectically integral lattice of rank $2n$ and $\nu : \Lambda \to \U(1)$ is a semicharacter. The \defn{type} of the GKP code $(\Lambda, \nu)$ is the type $(d_1, \ldots, d_n)$ of the lattice $\Lambda$ in Definition \ref{z2w9s5vx}.

The \defn{stabilizer group} of $(\Lambda,\nu)$ is the abelian group
\[
S_{\Lambda, \nu} \coloneqq \{\nu(\lambda)^{-1}T_\lambda : \lambda \in \Lambda\},
\]
i.e.\ the image of the abelian subgroup $\Lambda_\nu \subset \HH(V)$ under the Schr\"odinger representation; see Section \ref{23yhqu4v}.

The \defn{code space} of $(\Lambda, \nu)$ is the simultaneous $+1$ eigenspace of $S_{\Lambda, \nu}$ in $\S'(\R^n)$, denoted
\begin{equation}\label{owlt98ay}
\code_{\Lambda, \nu} \coloneqq \{\psi \in \S'(\R^n): T_\lambda \psi = \nu(\lambda) \psi \text{ for all } \lambda \in \Lambda\}.
\end{equation}

The \defn{Heisenberg group} of $(\Lambda, \nu)$ is the quotient group
\[
\Heis_{\Lambda, \nu} \coloneqq C_{\HH(V)}(\Lambda_\nu)/\Lambda_\nu,
\]
viewed as the Heisenberg group of $K = \Lambda^\perp/\Lambda$ with respect to $\exp(-2\pi i E)$; see Section \ref{23yhqu4v}. The \defn{Pauli} and \defn{Clifford groups} of $(\Lambda, \nu)$, denoted $\Pauli_{\Lambda, \nu}$ and $\Cliff_{\Lambda, \nu}$, respectively, are the corresponding Pauli and Clifford groups in the sense of Section \ref{hcdib98d}.
\end{definition}

\begin{remark}
In the physics literature, the semicharacter $\nu$ is called the \emph{gauge} \cite[Section II(C)]{royer2022encoding}. The choice of gauge is often implicitly encoded in a choice of basis $s_1, \ldots, s_{2n}$ for $\Lambda$, by taking the unique semicharacter satisfying $\nu(s_i)=1$ for all $i$, namely
\[
\nu(m_1s_1 + \cdots + m_{2n}s_{2n})
   = (-1)^{\sum_{i < j} m_i m_jE(s_i, s_j)},
   \qquad \text{for all } m_1, \ldots, m_{2n} \in \Z;
\]
see \cite[Eq.\ (30)]{royer2022encoding}.
\end{remark}

Let $m = d_n$, or equivalently (Lemma \ref{jiizju0l}), $m = \exp(\Lambda^\perp/\Lambda)$. In particular, $m\mu \in \Lambda$ for all $\mu \in \Lambda^\perp$. The intrinsic definition of the Pauli group therefore gives the following explicit description.

\begin{lemma}\label{g8ok30mq}
The Pauli group of $(\Lambda,\nu)$ is given by
\begin{equation}\label{b6fwcvci}
\Pauli_{\Lambda, \nu}
=
\{[\mu, \alpha] \in \Heis_{\Lambda, \nu} : \mu \in \Lambda^\perp \text{ and } \alpha^{2m} = \nu(m\mu)^{-2}\}.
\end{equation}
\end{lemma}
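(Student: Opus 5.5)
The plan is to compute $h^{2m}$ directly for an arbitrary element $h = [\mu, \alpha] \in \Heis_{\Lambda,\nu}$ and then unwind the quotient by $\Lambda_\nu$. By definition, $\Pauli_{\Lambda,\nu} = \{h \in \Heis_{\Lambda,\nu} : h^{2m} = 1\}$, where $m$ is the exponent of $K = \Lambda^\perp/\Lambda$. By Lemma \ref{jiizju0l}, $m = d_n$.

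First I compute powers in $\HH(V)$. From the group law $(u,\alpha)(v,\beta) = (u+v, \alpha\beta e^{-i\pi E(u,v)})$ and $E(\mu,\mu)=0$, induction gives $(\mu,\alpha)^k = (k\mu, \alpha^k)$ for all $k \ge 1$. Hence in the quotient, $[\mu,\alpha]^{2m} = [2m\mu, \alpha^{2m}]$.

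Next I decide when this class is trivial. An element $[\mu', \beta]$ is the identity of $\Heis_{\Lambda,\nu} = (\Lambda^\perp\times\U(1))/\Lambda_\nu$ exactly when $(\mu',\beta) \in \Lambda_\nu$, that is, $\mu' \in \Lambda$ and $\beta = \nu(\mu')^{-1}$. Since $m\mu \in \Lambda$, we have $2m\mu \in \Lambda$, so the condition $h^{2m}=1$ becomes $\alpha^{2m} = \nu(2m\mu)^{-1}$.

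Finally I rewrite $\nu(2m\mu)$ using the semicharacter identity with $\lambda = \lambda' = m\mu \in \Lambda$:
\[
\nu(2m\mu) = \nu(m\mu)^2 e^{i\pi E(m\mu, m\mu)} = \nu(m\mu)^2,
\]
because $E$ is alternating. This yields exactly the condition $\alpha^{2m} = \nu(m\mu)^{-2}$ in \eqref{b6fwcvci}.

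The remaining step is to check that the condition does not depend on the chosen representative of the class $[\mu,\alpha]$. This is automatic, because $h^{2m} = 1$ is intrinsic to $h$, but I would note it in a remark. There is no real obstacle here; the only care needed is the sign convention in $\Lambda_\nu$, namely $(\lambda, \nu(\lambda)^{-1})$.
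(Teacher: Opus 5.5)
Your proposal is correct and follows essentially the same route as the paper's proof: use $E(\mu,\mu)=0$ to get $[\mu,\alpha]^{2m}=[2m\mu,\alpha^{2m}]$, note that triviality means $(2m\mu,\alpha^{2m})\in\Lambda_\nu$, i.e.\ $\alpha^{2m}=\nu(2m\mu)^{-1}$, and then apply $\nu(2m\mu)=\nu(m\mu)^2$. You spell out a few steps the paper leaves implicit, namely $m\mu\in\Lambda$, the semicharacter identity, and independence of the representative.
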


\begin{proof}
By definition, $[\mu,\alpha] \in \Pauli_{\Lambda,\nu}$ if and only if $[\mu,\alpha]^{2m}=1$ in $\Heis_{\Lambda,\nu}$. Since $E(\mu,\mu)=0$, this is equivalent to
\[
(2m\mu,\alpha^{2m}) \in \Lambda_\nu.
\]
By the definition of $\Lambda_\nu$, this means $\alpha^{2m} = \nu(2m\mu)^{-1}$. Since $\nu(2m\mu)=\nu(m\mu)^2$, the result follows.
\end{proof}

We now define the logical system and the encoding. Under the Bargmann transform of Section \ref{73mes5iz}, the code space $\code_{\Lambda,\nu}$ is identified with the space of canonical theta functions $\Theta_{\Lambda,\nu}$. In particular, it is the Stone--von Neumann representation of $\Heis_{\Lambda,\nu}$, with the hermitian inner product transported from $\Theta_{\Lambda,\nu}$. Thus, after choosing an isomorphism between $\Heis_{\Lambda,\nu}$ and the standard Heisenberg group $\Heis_D$, the code space can be identified with the standard qudit Hilbert space.

\begin{definition}[Encodings]
Let $(\Lambda,\nu)$ be a GKP code of type $D = \diag(d_1,\ldots,d_n)$. The \defn{logical space} of $(\Lambda,\nu)$ is
\[
\C^{\otimes D} \coloneqq \C^{d_1} \otimes \cdots \otimes \C^{d_n}.
\]
An \defn{encoding} is a unitary isomorphism
\[
\mathrm{Enc} : \C^{\otimes D} \overset{\cong}{\too} \code_{\Lambda,\nu}
\]
such that the induced map
\begin{equation}\label{9puj0mpl}
\U(\C^{\otimes D}) \too \U(\code_{\Lambda,\nu}), \quad U \mtoo \mathrm{Enc} \circ U \circ \mathrm{Enc}^{-1}
\end{equation}
restricts to an isomorphism $\Heis_D \cong \Heis_{\Lambda,\nu}$.
\end{definition}

Encodings exist by Proposition \ref{3ujxcjsa}. Moreover, if $\mathrm{Enc}$ is an encoding, then \eqref{9puj0mpl} automatically restricts to isomorphisms
\[
\Pauli_D \cong \Pauli_{\Lambda,\nu}
\quad\text{and}\quad
\Cliff_D \cong \Cliff_{\Lambda,\nu}.
\]
More explicitly, an encoding is obtained by choosing an isomorphism $\Heis_D \cong \Heis_{\Lambda,\nu}$ and then choosing the corresponding Stone--von Neumann intertwiner $\C^{\otimes D} \cong \code_{\Lambda,\nu}$. This intertwiner is unique up to a global phase once the Heisenberg group isomorphism has been fixed.

Thus a GKP code $(\Lambda,\nu)$ together with an encoding gives a logical space $\mathscr{H}_{\mathrm{logical}} = \C^{\otimes D}$, an ideal physical space $\mathscr{H}_{\mathrm{physical}} = \S'(\R^n)$, a code space $\mathscr{H}_{\mathrm{code}} = \code_{\Lambda,\nu}$, and a linear embedding
\[
\mathrm{Enc} : \C^{\otimes D} \too \S'(\R^n).
\]
The asymptotic form of the isometry requirement will be proved in Section \ref{b9j929je}.

The Pauli gates on $\C^{\otimes D}$ can be implemented physically by translations $\alpha T_\mu$ with $\mu \in \Lambda^\perp$ and $\alpha^{2m} = \nu(m\mu)^{-2}$. These operators preserve the code space $\code_{\Lambda,\nu}$ and induce the Pauli gates under the encoding. In Section \ref{5nk36oj5}, we will show that this implementation extends from Pauli gates to all Clifford gates.

\subsection{GKP codes as complex abelian varieties}

We now give the geometric interpretation of GKP codes in the language of complex abelian varieties.

Recall that if $(X, L)$ is a polarized complex abelian variety of type $D = \diag(d_1, \ldots, d_n)$, a \defn{theta structure} on $(X, L)$ is an isomorphism of $\G_c(L) \cong \Heis_D$ restricting to the identity on $\U(1)$ (see e.g.\ \cite[Section 6.6]{BirkenhakeLange2004ComplexAbelianVarieties}). 

The following theorem summarizes the previous sections of this paper.

\begin{theorem}\label{p2fyb03t}
There is a one-to-one correspondence between GKP codes $(\Lambda,\nu)$ and pairs $(X,L)$ consisting of a complex abelian variety $X$ together with an ample line bundle $L$. Under this correspondence, $X = V/\Lambda$ and $L = L(H,\nu)$ is the line bundle obtained from $\nu$ by the Appell--Humbert theorem. Moreover, the Bargmann transform gives an isomorphism
\begin{equation}\label{8bx3kuxn}
\code_{\Lambda, \nu} \cong H^0(X, L),
\end{equation}
from the code space to the space of canonical theta functions. Under this isomorphism, the Heisenberg group of $(\Lambda,\nu)$ is identified with the compact theta group of $(X,L)$,
\[
\Heis_{\Lambda,\nu} \cong \G_c(L).
\]
Finally, under this correspondence, encodings of the GKP code $(\Lambda,\nu)$ are equivalent to theta structures on $(X,L)$.
\end{theorem}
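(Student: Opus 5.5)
The plan is to assemble the statement from the preliminaries of Section \ref{23yhqu4v} together with the Bargmann transform of Section \ref{73mes5iz}. The correspondence itself comes from the Appell--Humbert theorem. First, a symplectically integral lattice $\Lambda\subset V$ of rank $2n$, with the fixed compatible complex structure $I$, gives a hermitian form $H$ whose imaginary part $E$ is integral on $\Lambda$ and positive definite. So $X=V/\Lambda$ is a complex abelian variety and $L(H,\nu)$ is ample by Appell--Humbert. Conversely, every ample line bundle on a complex torus $X=V/\Lambda$ is $L(H,\nu)$ for a unique pair $(H,\nu)$ with $H$ positive definite and $\nu$ a semicharacter for $E=\operatorname{Im}H$; then $\Lambda\subset\Lambda^\perp$. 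Taking $(V,E)$ with this $I$ and a Darboux identification $V\cong\R^{2n}$ as in Section \ref{73mes5iz}, this is the inverse map. Strictly speaking, the bijection is up to the evident identifications: isomorphic pairs $(X,L)$ correspond to lattices related by complex-linear symplectic maps. I would state this explicitly and take the correspondence at the level of $(\Lambda,\nu)$ versus $(V/\Lambda,L(H,\nu))$.

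Next I would prove \eqref{8bx3kuxn}. By \eqref{l5qky9yw}, $\B$ is a linear homeomorphism $\S'(\R^n)\to\E(V)$. By the extension of \eqref{c03f536z} to $\S'$ (via density and continuity), it intertwines $T_u$ with $\rho(u,1)$. Hence $\psi\in\code_{\Lambda,\nu}$ if and only if $\rho(\lambda,\nu(\lambda)^{-1})\B\psi=\B\psi$ for all $\lambda\in\Lambda$, which says exactly that $\B\psi\in\Theta_{\Lambda,\nu}$. Composing with the canonical identification $\Theta_{\Lambda,\nu}\cong H^0(X,L)$ gives \eqref{8bx3kuxn}. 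The hermitian structure on $\code_{\Lambda,\nu}$ is by definition transported from \eqref{cu7nnb56}, so this isomorphism is unitary.

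For the group statement, $\B$ intertwines the actions of $C_{\HH(V)}(\Lambda_\nu)=\Lambda^\perp\times\U(1)$, hence of $\Heis_{\Lambda,\nu}$, on $\code_{\Lambda,\nu}$ and on $\Theta_{\Lambda,\nu}$. It then remains to check that the isomorphism $[\mu,\alpha]\mapsto\varphi_{\mu,\alpha}$ from Section \ref{23yhqu4v} is compatible with the actions on $\Theta_{\Lambda,\nu}$ and on $H^0(X,L)$. Concretely, for $s=[v\mapsto(v,f(v))]$, I would compute $\varphi_{\mu,\alpha}\circ s\circ t_{\mu}$, noting that $\varphi_{\mu,\alpha}$ covers $t_{-\mu}$, so $s$ is precomposed with $t_\mu$. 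At $v$ this gives $(v,\alpha e^{\pi(\frac12H(\mu,\mu)-H(v+\mu,\mu))}f(v+\mu))$. This equals $\alpha e^{-\pi(\frac12 H(\mu,\mu)+H(v,\mu))}f(v+\mu)=(\rho(\mu,\alpha)f)(v)$. This sign and phase bookkeeping is the only real computation, and I expect it to be the main place to be careful.

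Finally, encodings versus theta structures. An encoding is a unitary $\mathrm{Enc}:\C^{\otimes D}\to\code_{\Lambda,\nu}$ such that conjugation restricts to an isomorphism $\Heis_D\cong\Heis_{\Lambda,\nu}$ over $\U(1)$. Composing with the identification $\Heis_{\Lambda,\nu}\cong\G_c(L)$ just established gives a theta structure. Conversely, given a theta structure, Proposition \ref{3ujxcjsa}, via the Stone--von Neumann theorem, produces an intertwiner, unique up to a global phase. So the correspondence is bijective once encodings are taken modulo global phase, matching the phrasing in the introduction.
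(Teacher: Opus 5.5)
Your proposal is correct and follows essentially the same route as the paper's proof. You use Appell--Humbert for the correspondence, the Bargmann transform to identify $\code_{\Lambda,\nu}$ with $\Theta_{\Lambda,\nu}\cong H^0(X,L)$, the map $[\mu,\alpha]\mapsto\varphi_{\mu,\alpha}$ for $\Heis_{\Lambda,\nu}\cong\G_c(L)$, and Proposition~\ref{3ujxcjsa} for encodings versus theta structures. Your explicit check that $\varphi_{\mu,\alpha}$ acting on a section corresponds to $\rho(\mu,\alpha)$ acting on its theta function makes precise a point the paper leaves implicit. So do your caveats that the bijection holds only up to unitary identification of universal covers, and that encodings match theta structures only modulo a global phase.
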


\begin{proof}
Let $(\Lambda,\nu)$ be a GKP code. As recalled in Section \ref{23yhqu4v}, the lattice $\Lambda$ defines a complex abelian variety $X = V/\Lambda$, and the semicharacter $\nu$ defines an ample line bundle $L = L(H,\nu)$ on $X$ by the Appell--Humbert theorem. Conversely, after choosing the universal cover $V \to X$, the Appell--Humbert description of $L$ gives a hermitian form $H$ and a semicharacter $\nu$ on the kernel $\Lambda$ of the covering map. This recovers the GKP code $(\Lambda,\nu)$ and gives the stated correspondence.

The Bargmann transform identifies $\code_{\Lambda,\nu}$ with the space $\Theta_{\Lambda,\nu}$ of canonical theta functions. By Section \ref{23yhqu4v}, this space is canonically identified with $H^0(X,L)$. This gives the isomorphism \eqref{8bx3kuxn}. The same section identifies the Heisenberg group $\Heis_{\Lambda,\nu}$ with the compact theta group $\G_c(L)$, compatibly with the representations on $\Theta_{\Lambda,\nu}$ and $H^0(X,L)$.

It remains to compare encodings with theta structures. Under the isomorphism $\Heis_{\Lambda,\nu} \cong \G_c(L)$, a theta structure $\G_c(L) \cong \Heis_D$ is equivalently an isomorphism of central extensions
\[
\Heis_D \cong \Heis_{\Lambda,\nu}
\]
restricting to the identity on $\U(1)$. By Proposition \ref{3ujxcjsa}, such an isomorphism determines, uniquely up to a global phase, the corresponding unitary identification
\[
\C^{\otimes D} \cong \code_{\Lambda,\nu}
\]
of Stone--von Neumann representations. This is exactly an encoding in the sense defined above. Conversely, every encoding arises in this way from its underlying isomorphism $\Heis_D \cong \Heis_{\Lambda,\nu}$, and hence from a theta structure.
\end{proof}

\section{Asymptotic isometry of encodings}\label{b9j929je}

Let $(\Lambda,\nu)$ be a GKP code of type $D = \diag(d_1,\ldots,d_n)$, and let
\[
\mathrm{Enc} : \C^{\otimes D} \too \code_{\Lambda,\nu} \subset \S'(\R^n)
\]
be an encoding. The goal of this section is to show that, although this ideal encoding does not take values in a Hilbert space, it can be approximated by square-integrable encodings that are asymptotically isometric after a scalar renormalization of the $L^2$ inner product.

Following \cite{royer2022encoding}, consider the Gaussian envelope operators
\[
E_\beta : \S'(\R^n) \too \S'(\R^n), \quad E_\beta \coloneqq \exp(-\beta \sum_{j=1}^n \hat{n}_j),
\]
for $\beta > 0$, where $\hat{n}_j = \hat{a}_j^\dagger \hat{a}_j$.
The next proposition shows that these operators regularize tempered distributions into square-integrable functions.

\begin{proposition}\label{18d8ov7y}
For every tempered distribution $\psi \in \S'(\R^n)$ and every $\beta > 0$, we have $E_\beta\psi \in L^2(\R^n)$.
\end{proposition}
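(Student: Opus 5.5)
The plan is to pass to the Fock side via the Bargmann transform \eqref{l5qky9yw}, where $E_\beta$ becomes a dilation and the growth condition \eqref{w19o40qe} can be played off against the Gaussian weight defining $\mathcal{F}(\C^n)$. Since $\B : L^2(\R^n) \to \mathcal{F}(\C^n)$ is unitary and $\B : \S'(\R^n) \to \E(\C^n)$ is a bijection extending it, it suffices to show that for every $f \in \E(\C^n)$, the function $E_\beta^\B f$ lies in $\mathcal{F}(\C^n)$.

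\textbf{Step 1: compute $E_\beta^\B$.} By \eqref{5ctxjsq8}, $\hat{n}_j^\B = z_j \partial/\partial z_j$. Hence $\sum_j \hat{n}_j^\B$ is the Euler operator, acting on a monomial $z^\alpha$ by multiplication by $|\alpha|$. Therefore
\[
(E_\beta^\B f)(z) = f(e^{-\beta} z)
\]
on polynomials. To extend this to all $f \in \E(\C^n)$, I will use the Taylor expansion of $f$, which converges locally uniformly. The operator $E_\beta$ on $\S'(\R^n)$ should be understood as the continuous extension of the operator defined on the span of Hermite functions, whose Bargmann images are the monomials. The identity then holds by continuity in the topology of $\E(\C^n)$ (equivalently of $\S'$), since polynomials are dense there.

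This identification is the main obstacle. One must fix the meaning of $E_\beta$ on $\S'(\R^n)$ and check that the dilation formula is the correct extension. I would define $E_\beta$ on $\S'$ by duality: $E_\beta$ is self-adjoint and maps $\S$ to $\S$, since it is diagonal in the Hermite basis with rapidly decaying coefficients. Coefficients of a tempered distribution grow polynomially in the Hermite basis, so $E_\beta$ is well defined on $\S'$. Multiplying them by $e^{-\beta|\alpha|}$ visibly gives square-summable coefficients, which is an alternative direct proof.

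\textbf{Step 2: the estimate.} Take $f \in \E(\C^n)$ with $|f(v)| \le C e^{\pi|v|^2/2}(1+|v|^2)^{k/2}$. Then
\[
|f(e^{-\beta}z)|^2 e^{-\pi|z|^2} \le C^2 (1+|z|^2)^k \exp\!\big(-\pi(1-e^{-2\beta})|z|^2\big).
\]
Since $1-e^{-2\beta}>0$, this bound is integrable over $\C^n$. Together with holomorphy, this gives $E_\beta^\B f \in \mathcal{F}(\C^n)$. Applying $\B^{-1}$ then yields $E_\beta\psi \in L^2(\R^n)$.
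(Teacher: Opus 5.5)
Your proposal is correct and follows the paper's proof: you pass to $\E(\C^n)$ via the Bargmann transform, identify $E_\beta$ with the dilation $f(z)\mapsto f(e^{-\beta}z)$ using \eqref{5ctxjsq8}, and bound $|f(e^{-\beta}z)|^2e^{-\pi|z|^2}$ by an integrable function using \eqref{w19o40qe}. The paper simply asserts the dilation formula on all of $\E(\C^n)$, so your discussion of how $E_\beta$ is defined on $\S'(\R^n)$ through Hermite coefficients fills in a step the paper leaves implicit; the same discussion also gives your alternative direct proof.
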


\begin{proof}
Identify $V$ with $\C^n$ as in Section \ref{73mes5iz}, and use the Bargmann transform to identify $L^2(\R^n)$ with $\mathcal{F}(\C^n)$ and $\S'(\R^n)$ with $\E(\C^n)$. By \eqref{5ctxjsq8}, the operator $E_\beta$ is the rescaling
\[
(E_\beta f)(z) = f(e^{-\beta}z)
\]
on $\E(\C^n)$.

Let $f \in \E(\C^n)$. By the growth condition \eqref{w19o40qe}, there exist constants $k \ge 0$ and $C > 0$ such that
\[
|f(z)| \le C e^{\pi |z|^2/2}(1 + |z|^2)^{k/2}
\]
for all $z \in \C^n$. Hence
\[
|f(e^{-\beta}z)|^2 e^{-\pi |z|^2}
\le
C^2(1 + e^{-2\beta}|z|^2)^k e^{-\pi(1-e^{-2\beta})|z|^2}.
\]
Since $1-e^{-2\beta} > 0$, the right-hand side is integrable on $\C^n$. Thus $E_\beta f \in \mathcal{F}(\C^n)$, and therefore $E_\beta\psi \in L^2(\R^n)$.
\end{proof}

For each $\beta \ge 0$, define the deformed encoding
\begin{equation}\label{04opl8tt}
\mathrm{Enc}_\beta \coloneqq E_\beta \circ \mathrm{Enc} : \C^{\otimes D} \too \mathcal{S}'(\R^n).
\end{equation}
By Proposition \ref{18d8ov7y}, this takes values in $L^2(\R^n)$ for all $\beta > 0$. Moreover, the deformed encodings converge to the ideal encoding in the sense that
\[
\lim_{\beta \to 0} \mathrm{Enc}_\beta(\varphi) = \mathrm{Enc}(\varphi)
\]
in $\S'(\R^n)$ for every $\varphi \in \C^{\otimes D}$.

The next theorem shows that, after rescaling the $L^2$ inner product by a scalar depending on $\beta$, the deformed encodings become asymptotically isometric.

\begin{theorem}\label{hr1j1x4z}
There exists $a > 0$ such that
\[
\ip{\varphi, \psi}
=
c(\beta)\ip{\operatorname{Enc}_\beta(\varphi), \operatorname{Enc}_\beta(\psi)}_{L^2}
+
O(e^{-a/\beta})\|\varphi\|\|\psi\|, \quad \text{ as } \beta \to 0,
\]
for all $\varphi, \psi \in \C^{\otimes D}$, where
\[
c(\beta) \coloneqq (1 - e^{-2\beta})^n\det(\Lambda).
\]
In particular,
\[
\ip{\varphi, \psi}
=
\lim_{\beta \to 0}
\det(\Lambda)(2\beta)^n
\ip{\operatorname{Enc}_\beta(\varphi), \operatorname{Enc}_\beta(\psi)}_{L^2}.
\]
\end{theorem}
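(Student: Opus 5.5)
The plan is to work entirely in the Fock picture and reduce the statement to a Poisson summation computation for a $\Lambda$-periodic function. We identify $V$ with $\C^n$ as in Section \ref{73mes5iz}. By construction the Bargmann transform $\B$ is unitary from $L^2(\R^n)$ onto $\mathcal{F}(\C^n)$, and it sends $\code_{\Lambda,\nu}$ onto $\Theta_{\Lambda,\nu}$. By definition of an encoding, $\mathrm{Enc}$ is unitary for the inner product $\ip{\cdot,\cdot}_\Theta$ of \eqref{cu7nnb56} transported to the code space. Hence, writing $f = \B\,\mathrm{Enc}(\varphi)$ and $g = \B\,\mathrm{Enc}(\psi)$, we have $\ip{\varphi,\psi} = \ip{f,g}_\Theta$. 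As in the proof of Proposition \ref{18d8ov7y}, $E_\beta$ acts by $f \mapsto f(e^{-\beta}\,\cdot)$, so
\[
\ip{\mathrm{Enc}_\beta(\varphi),\mathrm{Enc}_\beta(\psi)}_{L^2} = \frac{1}{\pi^n}\int_V f(e^{-\beta}z)\overline{g(e^{-\beta}z)}e^{-\pi|z|^2}\vol_H .
\]
We substitute $w = e^{-\beta}z$, which has real Jacobian $e^{2n\beta}$. We then set
\[
h(w) \coloneqq f(w)\overline{g(w)}e^{-\pi|w|^2}, \qquad t \coloneqq e^{2\beta}-1 .
\]
This turns the $L^2$ inner product into $\frac{e^{2n\beta}}{\pi^n}\int_V h(w)e^{-\pi t|w|^2}\,dw$.

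The key structural observation is that $h$ is $\Lambda$-periodic. To see this, use the quasi-periodicity $f(z+\lambda) = \nu(\lambda)e^{\pi(\frac12 H(\lambda,\lambda)+H(z,\lambda))}f(z)$ and the same identity for $g$. The two factors together contribute $e^{\pi(H(\lambda,\lambda)+2\operatorname{Re}H(w,\lambda))}$, since $|\nu|=1$. This exactly cancels the change in $e^{-\pi|w+\lambda|^2}$. Moreover $h$ is real-analytic, so it has an absolutely convergent Fourier expansion $h(w) = \sum_{\xi\in\Lambda^*}\hat h(\xi)e^{2\pi i\ip{\xi,w}}$, where $\ip{\cdot,\cdot} = \operatorname{Re}H$ and $\Lambda^*$ is the euclidean dual lattice. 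Its zeroth coefficient is
\[
\hat h(0) = \frac{1}{\det\Lambda}\int_{V/\Lambda}h = \frac{\pi^n}{\det\Lambda}\ip{f,g}_\Theta .
\]
Every coefficient satisfies $|\hat h(\xi)| \le \frac{1}{\det\Lambda}\int_{V/\Lambda}|h| \le \frac{\pi^n}{\det\Lambda}\|f\|_\Theta\|g\|_\Theta$ by Cauchy--Schwarz.

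Next we integrate term by term against the Gaussian; the interchange is justified by absolute convergence, or equivalently by Poisson summation. On the $2n$-dimensional real space $V$ one has $\int_V e^{-\pi t|w|^2}e^{2\pi i\ip{\xi,w}}dw = t^{-n}e^{-\pi|\xi|^2/t}$. This gives
\[
\ip{\mathrm{Enc}_\beta(\varphi),\mathrm{Enc}_\beta(\psi)}_{L^2} = \frac{e^{2n\beta}}{\pi^n t^n}\sum_{\xi\in\Lambda^*}\hat h(\xi)e^{-\pi|\xi|^2/t} .
\]
Since $e^{2n\beta}t^{-n} = (1-e^{-2\beta})^{-n}$, the $\xi=0$ term equals exactly $\ip{\varphi,\psi}/c(\beta)$. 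Multiplying through by $c(\beta)$, the remainder is bounded by
\[
\|\varphi\|\|\psi\|\sum_{\xi\in\Lambda^*\setminus 0}e^{-\pi|\xi|^2/t} .
\]

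The only genuinely analytic step, and the main thing to check carefully, is this tail estimate. As $\beta\to 0$ we have $t = 2\beta(1+O(\beta))$. Let $\lambda_1 = \min_{\xi\in\Lambda^*\setminus 0}|\xi|$. The tail is dominated by the shortest vectors and is $O(e^{-a/\beta})$ for any $a < \pi\lambda_1^2/2$. To see this uniformly, compare $\sum_{\xi\ne0}e^{-\pi|\xi|^2/t}$ with $e^{-\pi(1-\epsilon)\lambda_1^2/t}\sum_{\xi}e^{-\pi\epsilon|\xi|^2/t}$, and note that the last theta sum is bounded as $t\to0$. This yields the first formula. The limit formula then follows because $(1-e^{-2\beta})^n/(2\beta)^n \to 1$ and $\ip{\mathrm{Enc}_\beta(\varphi),\mathrm{Enc}_\beta(\psi)}_{L^2}$ grows only like $\beta^{-n}$.
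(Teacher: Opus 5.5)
Your proof is correct and is essentially the paper's argument. The paper unfolds the integral to $V/\Lambda$ and applies Poisson summation to the periodized Gaussian $\sum_{\lambda\in\Lambda}e^{-\pi\alpha|z+\lambda|^2}$, while you Fourier-expand the $\Lambda$-periodic function $h$ and integrate each mode against the Gaussian on $V$; this is the same Poisson/Parseval identity read from the other side, and it yields the same main term, the same Cauchy--Schwarz bound on the nonzero modes, and the same theta-sum tail estimate (the paper's tail bound keeps the sharp exponent $\pi\lambda_1^2/\alpha$ rather than your $(1-\epsilon)$ factor).
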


\begin{proof}
We use the Bargmann transform to identify $\S'(\R^n)$ with $\E(\C^n)$ and $L^2(\R^n)$ with $\mathcal{F}(\C^n)$, as in the proof of Proposition \ref{18d8ov7y}. Under this identification, $\code_{\Lambda,\nu}$ is identified with $\Theta_{\Lambda,\nu}$, equipped with the inner product
\[
\ip{f, g}_{\Theta}
=
\frac{1}{\pi^n}
\int_{\C^n/\Lambda} f(z)\overline{g(z)}e^{-\pi|z|^2}\,dz\,d\bar{z};
\]
see \eqref{cu7nnb56}.
Thus it suffices to prove that
\[
c(\beta)\ip{E_\beta f, E_\beta g}_{\mathcal{F}}
=
\ip{f, g}_{\Theta}
+
O(e^{-a/\beta})\|f\|_{\Theta}\|g\|_{\Theta}
\]
for all $f,g \in \Theta_{\Lambda,\nu}$.

Let $F \subset \C^n$ be a fundamental domain for $\Lambda$. Since $(E_\beta f)(z)=f(e^{-\beta}z)$, we have
\begin{align*}
\ip{E_\beta f, E_\beta g}_{\mathcal{F}}
&=
\frac{e^{2n\beta}}{\pi^n}
\int_{\C^n} f(z)\overline{g(z)}e^{-\pi e^{2\beta}|z|^2}\,dz\,d\bar{z} \\
&=
\frac{e^{2n\beta}}{\pi^n}
\sum_{\lambda \in \Lambda}
\int_F f(z+\lambda)\overline{g(z+\lambda)}
e^{-\pi e^{2\beta}|z+\lambda|^2}\,dz\,d\bar{z}.
\end{align*}
Using the automorphy relation for theta functions, the last expression becomes
\[
\frac{e^{2n\beta}}{\pi^n}
\int_{\C^n/\Lambda}
f(z)\overline{g(z)}e^{-\pi|z|^2}K_\beta(z)\,dz\,d\bar{z},
\]
where
\[
K_\beta(z)
\coloneqq
\sum_{\lambda \in \Lambda}
e^{-\pi\alpha|z+\lambda|^2},
\qquad
\alpha \coloneqq e^{2\beta}-1.
\]
By the Poisson summation formula,
\[
K_\beta(z)
=
\frac{1}{\alpha^n\det(\Lambda)}
\sum_{\xi \in \Lambda^*}
e^{-\frac{\pi}{\alpha}|\xi|^2}
e^{2\pi i\operatorname{Re}(\bar{\xi}\cdot z)}.
\]
Therefore
\[
c(\beta)\ip{E_\beta f, E_\beta g}_{\mathcal{F}}
=
\ip{f, g}_{\Theta}
+
R_\beta(f,g),
\]
with
\[
c(\beta)
=
\frac{\alpha^n\det(\Lambda)}{e^{2n\beta}}
=
(1-e^{-2\beta})^n\det(\Lambda),
\]
and
\[
|R_\beta(f,g)|
\le
A_\beta\|f\|_{\Theta}\|g\|_{\Theta},
\qquad
A_\beta
\coloneqq
\sum_{\xi \in \Lambda^*\setminus\{0\}}
e^{-\frac{\pi}{\alpha}|\xi|^2}.
\]

It remains to bound $A_\beta$. Let
\[
\lambda_1 \coloneqq \min\{|\xi| : \xi \in \Lambda^*,\, \xi \ne 0\}.
\]
For $\beta$ sufficiently small, $\alpha \le 2$, and hence
\[
A_\beta
\le
e^{-\frac{\pi}{\alpha}\lambda_1^2}
\sum_{\xi \in \Lambda^*\setminus\{0\}}
e^{-\frac{\pi}{2}(|\xi|^2-\lambda_1^2)}
\le
C_\Lambda e^{-\frac{\pi}{\alpha}\lambda_1^2}
\]
for some constant $C_\Lambda > 0$. Since $\alpha = e^{2\beta}-1 \sim 2\beta$ as $\beta \to 0$, there exists $a > 0$ such that
\[
A_\beta = O(e^{-a/\beta}).
\]
This proves the stated asymptotic formula. The final limit follows from
\[
c(\beta) \sim \det(\Lambda)(2\beta)^n
\]
as $\beta \to 0$.
\end{proof}

Thus the logical inner product is recovered from the renormalized $L^2$ inner products of the finite-energy approximations. This proves the asymptotic form of the isometry requirement \ref{0n07smer}.

\section{Clifford gates via Gaussian unitaries}\label{5nk36oj5}

Let $(\Lambda,\nu)$ be a GKP code of type $D$ with encoding
\[
\mathrm{Enc} : \C^{\otimes D} \overset{\cong}{\too} \code_{\Lambda,\nu}.
\]
In this section we prove the logical gate implementation property \ref{adfuq8hq} for Clifford gates. More precisely, we show that every Clifford gate on $\C^{\otimes D}$ is induced by a \defn{Gaussian unitary}, i.e.\ an operator in the extended metaplectic representation
\[
\rho : \Mp(V) \ltimes \HH(V) \too \GL(\S'(\R^n));
\]
see Section \ref{zz4171u8}.

We also study an important subclass of these operators. Recall that the compatible complex structure on $V$ determines the unitary group
\[
\U(V) = \Sp(V) \cap \mathrm{O}(V),
\]
which embeds in $\Mp(V)$ through the canonical splitting of $\pi : \pi^{-1}(\U(V)) \to \U(V)$. A Gaussian unitary is called \defn{passive} if it comes from the subgroup $\U(V) \ltimes \HH(V)$.\footnote{This is a slight abuse of terminology; see Footnote~\ref{f3rsimhu}.} In quantum optics, these are the Gaussian unitaries that do not involve squeezing; they are built from displacements and unitary changes of modes, such as phase shifts and beam splitters (see, e.g., \cite{weedbrook2012gaussian}). They are therefore typically easier to implement physically than general Gaussian unitaries. We will show that the group of passive Clifford gates is naturally isomorphic to a quotient of the group of automorphisms of the corresponding polarized abelian variety.

The first step is to identify the Gaussian unitaries that preserve the code space. Since $\code_{\Lambda,\nu}$ is the simultaneous $+1$ eigenspace of $\Lambda_\nu$, this amounts to considering the normalizer of $\Lambda_\nu$ in the extended metaplectic group.

\begin{definition}\label{1m8hfr2m}
The \defn{normalizer group} of a GKP code $(\Lambda,\nu)$ is the quotient
\[
\mathcal{N}_{\Lambda,\nu}
\coloneqq
N_{\Mp(V) \ltimes \HH(V)}(\Lambda_\nu)/\Lambda_\nu.
\]
The \defn{passive normalizer group} is the subgroup
\[
\mathcal{N}_{\Lambda,\nu}^{\passive}
\coloneqq
N_{\U(V) \ltimes \HH(V)}(\Lambda_\nu)/\Lambda_\nu.
\]
\end{definition}

Every element of $N_{\Mp(V) \ltimes \HH(V)}(\Lambda_\nu)$ preserves the simultaneous eigenspace of $\Lambda_\nu$, and hence preserves $\code_{\Lambda,\nu}$. Since $\Lambda_\nu$ acts trivially on $\code_{\Lambda,\nu}$, this gives a representation
\begin{equation}\label{1jxc4iv1}
\mathcal{N}_{\Lambda,\nu} \too \GL(\code_{\Lambda,\nu}).
\end{equation}
Our first main result identifies the image of this representation.

\begin{theorem}\label{cczuu62j}
The representation \eqref{1jxc4iv1} is unitary. Moreover, its image is precisely the Clifford group $\Cliff_{\Lambda,\nu}$, giving a surjection
\begin{equation}\label{j03xt30z}
\mathcal{N}_{\Lambda,\nu} \too \Cliff_{\Lambda,\nu}.
\end{equation}
In particular, every Clifford gate on $\mathscr{H}_{\mathrm{logical}} = \C^{\otimes D}$ can be realized by a Gaussian unitary on $\mathscr{H}_{\mathrm{physical}} = \S'(\R^n)$.
\end{theorem}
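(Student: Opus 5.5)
The plan is to split the statement into three parts: (a) elements of $\mathcal{N}_{\Lambda,\nu}$ act on $\code_{\Lambda,\nu}$ by operators that normalize $\Heis_{\Lambda,\nu}$; (b) these operators are unitary; (c) every element of $\Cliff_{\Lambda,\nu}$ arises this way.

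\emph{Step (a).} Let $g=(M,u,\alpha)\in N_{\Mp(V)\ltimes\HH(V)}(\Lambda_\nu)$. Conjugation by $g$ preserves $\Lambda_\nu$, so it also preserves its centralizer $C_{\HH(V)}(\Lambda_\nu)=\Lambda^\perp\times\U(1)$. Reading off the linear part, $M\Lambda=\Lambda$ and hence $M\Lambda^\perp=\Lambda^\perp$. Therefore conjugation by $g$ induces an automorphism $\varphi_g$ of $\Heis_{\Lambda,\nu}$ that is the identity on $\U(1)$. Since $\rho(g)$ preserves $\code_{\Lambda,\nu}$, its restriction satisfies
\[
\rho(g)\,h\,\rho(g)^{-1}=\varphi_g(h)
\]
on $\code_{\Lambda,\nu}$ for all $h\in\Heis_{\Lambda,\nu}$. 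Now apply Proposition \ref{94mzk44o}, which rests on Stone--von Neumann and the irreducibility of $\code_{\Lambda,\nu}\cong\Theta_{\Lambda,\nu}$. It gives a unitary $U_g\in\Cliff_{\Lambda,\nu}$ with $\varphi_{U_g}=\varphi_g$. By Schur's lemma, $\rho(g)|_{\code}=c(g)U_g$ for some scalar $c(g)\in\C^\times$. So the whole statement reduces to two claims: $|c(g)|=1$, and the map $g\mapsto\varphi_g$ hits all of $\Aut_{\U(1)}(\Heis_{\Lambda,\nu})$.

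\emph{Step (b): unitarity, the main obstacle.} A purely algebraic argument looks risky. The map $g\mapsto|c(g)|$ is a homomorphism to $\R_{>0}$, but it is defined on a group built from an arithmetic subgroup of $\Sp(V)$, which can have infinite abelianization (for example when $n=1$). So I would instead give an intrinsic formula for the norm on $\code_{\Lambda,\nu}$. Fix a normalized Schwartz vector $\chi$, say the Gaussian vacuum. For $\psi\in\code_{\Lambda,\nu}$, the function $v\mapsto|\langle\psi,T_v\chi\rangle|^2$ is $\Lambda$-periodic, because $T_\lambda$ acts on $\psi$ by a phase. I would then prove
\[
\int_{V/\Lambda}|\langle\psi,T_v\chi\rangle|^2\,dv=\kappa\,\|\psi\|_\Theta^2\,\|\chi\|_{L^2}^2
\]
with a constant $\kappa$ depending only on $\Lambda$. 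The proof is a periodized version of the Moyal orthogonality relation $\int_V|\langle\phi,T_v\chi\rangle|^2dv=\|\phi\|^2\|\chi\|^2$. It can be verified in the Fock model, where $\langle f,T_v\chi\rangle$ for the vacuum $\chi$ is essentially $f(v)e^{-\pi|v|^2/2}$ up to conjugation and sign conventions. I would first check that with this choice of $\chi$ the left side is exactly a multiple of \eqref{cu7nnb56}. Then, using the sesquilinear form together with irreducibility under $\Heis_{\Lambda,\nu}$ and invariance of the pairing under the Heisenberg action, I would deduce that the formula holds for every $\chi$ with the same $\kappa$.

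Granting this formula, compute for $\rho(g)\psi$:
\[
\langle\rho(g)\psi,T_v\chi\rangle=\langle\psi,\rho(g)^{-1}T_v\chi\rangle=(\text{phase})\,\langle\psi,T_{M^{-1}v}\,\chi'\rangle,\qquad \chi'=\rho(g)^{-1}\chi.
\]
Here $\chi'$ is again a normalized Schwartz vector, because $\rho$ is unitary on $L^2$ and preserves $\mathcal{S}$. The map $M^{-1}$ is symplectic, hence volume preserving, and it preserves $\Lambda$, so it descends to a measure-preserving diffeomorphism of $V/\Lambda$. Changing variables in the integral and using the independence of $\chi$ gives $\|\rho(g)\psi\|_\Theta=\|\psi\|_\Theta$. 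Hence $|c(g)|=1$ and the representation \eqref{1jxc4iv1} is unitary with image contained in $\Cliff_{\Lambda,\nu}$.

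\emph{Step (c): surjectivity.} By Proposition \ref{xor7ubv5}, and since $\Heis_{\Lambda,\nu}$ is the image of $\Lambda^\perp\times\U(1)\subset\mathcal{N}_{\Lambda,\nu}$, it suffices to realize every element of $\Sp(K)$, where $K=\Lambda^\perp/\Lambda$. Fix a symplectic basis of $\Lambda$ as in Lemma \ref{aden93im}. This identifies $\Lambda^\perp/\Lambda$ with $K_D$, and the group $\Sp(\Lambda)$ of symplectic lattice automorphisms with an integral symplectic group preserving $D$. I would use the standard fact that the reduction map from $\Sp(\Lambda)$ to $\Sp(K_D)$ is surjective; this is strong approximation, or can be done directly on the generators of $\Sp(K_D)$ given by transvections. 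Lift $\bar M\in\Sp(K)$ to $M\in\Sp(\Lambda)$ and then to $\Mp(V)$. The element $M$ need not preserve $\nu$. However, $\nu\circ M^{-1}$ and $\nu$ are semicharacters for the same form, so $\nu\circ M^{-1}/\nu$ is a character of $\Lambda$. This character equals $e^{-2\pi iE(u,\cdot)}$ for some $u\in V$, and then $(M,u,1)$ normalizes $\Lambda_\nu$. Its class in $\mathcal{N}_{\Lambda,\nu}$ induces $\bar M$ on $K$. Combined with Proposition \ref{94mzk44o} and the fact that scalars lie in $\rho(\U(1))$, this shows the image is all of $\Cliff_{\Lambda,\nu}$.
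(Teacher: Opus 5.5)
Your proposal is correct. Steps (a) and (c) follow the paper's structure: Proposition \ref{utnj2ijc}, the exact sequence $1\to\Heis_{\Lambda,\nu}\to\Cliff_{\Lambda,\nu}\to\Sp(K)\to 1$, and lifting $\Sp(K)$ to $\Sp(\Lambda)$. The genuine difference is unitarity, where the paper uses exactly the algebraic argument you set aside.

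\textbf{The paper's unitarity argument.} Schur gives $\ip{U\varphi,U\psi}=c(U)\ip{\varphi,\psi}$. The homomorphism $c$ to $\R_{>0}$ kills $\Heis_{\Lambda,\nu}$ and $\{\pm1\}\subset\Mp(\Lambda)$, so it factors through $\Sp(\Lambda)$. That group has finite abelianization, so $c$ is trivial.

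\textbf{Your reason for avoiding it is mistaken.} For $n=1$, a linear map of $\R^2$ is symplectic iff it has determinant $1$. So in a symplectic basis, $\Sp(\Lambda)$ is all of $\mathrm{SL}(2,\Z)$ for every $d_1$, and $\mathrm{SL}(2,\Z)^{\mathrm{ab}}\cong\Z/12\Z$. Infinite abelianization only occurs for proper finite-index subgroups such as principal congruence subgroups, and these never arise as $\Sp(\Lambda)$ here. For $n\ge 2$, finiteness follows from property (T)/Margulis, as the paper cites via \cite{margulis1991discrete}.

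\textbf{What your route buys.} You replace that group-theoretic input with an intrinsic formula for the code norm: a periodized Moyal identity, combined with the change of variables $v\mapsto M^{-1}v$ on $V/\Lambda$. This makes the invariance manifest and explains \emph{why} Gaussian unitaries are isometric on the code space. The price is some analysis.

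Two points in your sketch need more care.

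\textbf{Independence of $\kappa$ from $\chi$.} Schur under $\Heis_{\Lambda,\nu}$ only gives $\int_{V/\Lambda}|\ip{\psi,T_v\chi}|^2dv=\kappa(\chi)\|\psi\|_\Theta^2$. Showing $\kappa(\chi)=\kappa\|\chi\|^2$ needs invariance under all of $\HH(V)$ on the $\chi$ side. Since this form is defined only on $\S(\R^n)$ and is not known to be $L^2$-bounded, Schur does not apply directly. A clean fix:
\begin{itemize}
\item Every $\psi\in\code_{\Lambda,\nu}$ can be written as $\sum_{\lambda\in\Lambda}\nu(\lambda)^{-1}T_\lambda\phi$ with $\phi\in\S(\R^n)$, because the image of this map is a nonzero $\Heis_{\Lambda,\nu}$-invariant subspace.
\item The quasi-periodicity of $v\mapsto\ip{\psi,T_v\chi}$ lets you unfold $\int_{V/\Lambda}$ into $\int_V$.
\item Applying the ordinary Moyal relation termwise then yields $c\|\chi\|^2\ip{\phi,\psi}$, with the factor $\|\chi\|^2$ appearing directly.
\end{itemize}

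\textbf{Surjectivity of $\Sp(\Lambda)\to\Sp(K)$ in step (c).} Strong approximation gives surjectivity of $\Sp(2n,\Z)\to\Sp(2n,\Z/d\Z)$, which settles type $(d,\ldots,d)$. For unequal $d_i$, $\Sp(\Lambda)$ is the automorphism group of a non-unimodular form. Strong approximation then only reduces surjectivity to a local lifting problem at the primes dividing $d_n$, and a transvection argument is not routine there. This is the content of \cite[Theorem 2]{brasch1993lifting}, which the paper invokes; you should cite it or prove the local lifting.
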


To prove Theorem \ref{cczuu62j}, we first give a more explicit description of the normalizer group. Consider the groups
\[
\Sp(\Lambda) \coloneqq \{M \in \Sp(V) : M\Lambda = \Lambda\}
\quad\text{and}\quad
\Mp(\Lambda) \coloneqq \{M \in \Mp(V) : M\Lambda = \Lambda\}.
\]
For $M \in \Mp(\Lambda)$, the map
\[
\Lambda \too \U(1), \quad \lambda \mtoo \frac{\nu(M\lambda)}{\nu(\lambda)}
\]
is a character. Hence there is a unique class $u_M \in V/\Lambda^\perp$ such that
\begin{equation}\label{g7nmwq61}
\frac{\nu(M\lambda)}{\nu(\lambda)}
=
e^{2\pi iE(u_M,M\lambda)}
\end{equation}
for all $\lambda \in \Lambda$.

\begin{proposition}\label{utnj2ijc}
We have
\[
N_{\Mp(V) \ltimes \HH(V)}(\Lambda_\nu)
=
\{(M,u_M+\mu,\alpha) : M \in \Mp(\Lambda),\ \mu \in \Lambda^\perp,\ \alpha \in \U(1)\}.
\]
In particular, the sequence
\[
\begin{tikzcd}
1 \arrow{r} & \Heis_{\Lambda,\nu} \arrow{r} & \mathcal{N}_{\Lambda,\nu} \arrow{r} & \Mp(\Lambda) \arrow{r} & 1
\end{tikzcd}
\]
is exact.
\end{proposition}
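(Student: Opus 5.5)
The plan is to compute conjugation of $\Lambda_\nu$ by an arbitrary element $g=(M,u,\alpha)$ of the extended metaplectic group directly from the group law. I will then read off the conditions on $M$ and $u$ for which $g\Lambda_\nu g^{-1}=\Lambda_\nu$.

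First, using $(M,u,\alpha)(N,v,\beta)=(MN,u+Mv,\alpha\beta e^{-i\pi E(u,Mv)})$, the inverse is $g^{-1}=(M^{-1},-M^{-1}u,\alpha^{-1})$; the phase correction vanishes since $E(u,u)=0$. For $\lambda\in\Lambda$, viewed as $(1,\lambda,\nu(\lambda)^{-1})$, a short computation gives
\[
g(1,\lambda,\nu(\lambda)^{-1})g^{-1} = \bigl(1, M\lambda, \nu(\lambda)^{-1}e^{-2\pi i E(u,M\lambda)}\bigr).
\]
The phase arises from the two cross terms $e^{-i\pi E(u,M\lambda)}$ and $e^{-i\pi E(u+M\lambda,-u)}=e^{-i\pi E(u,M\lambda)}$, and the $\alpha$'s cancel. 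I will double-check the signs against the commutation relation \eqref{onkixs2f}, i.e.\ $T_uT_vT_u^{-1}=e^{-2\pi iE(u,v)}T_v$, together with $Q_MT_\lambda Q_M^{-1}=T_{M\lambda}$.

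Second, I read off the normalizing condition. The conjugate lies in $\Lambda_\nu$ for all $\lambda$ if and only if two things hold. The first is $M\Lambda\subset\Lambda$. The second is $\nu(\lambda)^{-1}e^{-2\pi iE(u,M\lambda)}=\nu(M\lambda)^{-1}$ for all $\lambda$, that is, $\nu(M\lambda)/\nu(\lambda)=e^{2\pi iE(u,M\lambda)}$. Applying the same argument to $g^{-1}$, whose symplectic part is $M^{-1}$, gives $M^{-1}\Lambda\subset\Lambda$. Hence $M\Lambda=\Lambda$, so $M\in\Mp(\Lambda)$; this is needed for equality of sets rather than mere inclusion.

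Given $M\in\Mp(\Lambda)$, the phase condition says that $u$ and $u_M$ define the same character $\lambda\mapsto e^{2\pi iE(\cdot,M\lambda)}$ on $\Lambda$. Since $M\Lambda=\Lambda$, this means $E(u-u_M,\Lambda)\subset\Z$, i.e.\ $u\in u_M+\Lambda^\perp$. This uses the existence and uniqueness of $u_M\in V/\Lambda^\perp$ from \eqref{g7nmwq61}, which in turn relies on every character of $\Lambda$ being of the form $e^{2\pi iE(w,\cdot)}$ by nondegeneracy of $E$. Conversely, every such triple normalizes $\Lambda_\nu$, because the conjugation formula above lands back in $\Lambda_\nu$. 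Here $\alpha$ is arbitrary.

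Third, for the exact sequence I take the map $(M,u_M+\mu,\alpha)\mapsto M$. It is a homomorphism, since projection to $\Mp(V)$ is one, and it is surjective by the description just obtained. It kills $\Lambda_\nu$, so it descends to $\mathcal N_{\Lambda,\nu}$. Its kernel in the normalizer is the set of triples with $M=1$; then $u_1\in\Lambda^\perp$, because $\nu(\lambda)/\nu(\lambda)=1$. This kernel is $\{(1,\mu,\alpha):\mu\in\Lambda^\perp\}=C_{\HH(V)}(\Lambda_\nu)$, and quotienting by $\Lambda_\nu$ gives exactly $\Heis_{\Lambda,\nu}$.

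The only delicate step is the sign and phase bookkeeping in the conjugation formula, which must match the convention in \eqref{g7nmwq61}. Everything else is formal.
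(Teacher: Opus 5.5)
Your proposal is correct and follows essentially the same route as the paper: the same conjugation formula, the same trick of applying the argument to $g^{-1}$ to get $M\Lambda=\Lambda$, and the same identification $u\in u_M+\Lambda^\perp$. Beyond the paper, you also spell out the kernel computation ($u_1\in\Lambda^\perp$, so the kernel is $C_{\HH(V)}(\Lambda_\nu)=\Lambda^\perp\times\U(1)$), which the paper leaves as ``quotienting by $\Lambda_\nu$.''
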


\begin{proof}
Let $(M,u,\alpha) \in N_{\Mp(V) \ltimes \HH(V)}(\Lambda_\nu)$. Then, for all $\lambda \in \Lambda$, we have
\begin{equation}\label{gd3ubnta}
(M,u,\alpha)(1,\lambda,\nu(\lambda)^{-1})(M,u,\alpha)^{-1}
=
(1,M\lambda,\nu(\lambda)^{-1}e^{-2\pi iE(u,M\lambda)})
\in \Lambda_\nu.
\end{equation}
It follows that $M\Lambda \subset \Lambda$. Applying the same argument to $(M,u,\alpha)^{-1}$ gives $M^{-1}\Lambda \subset \Lambda$, and hence $M \in \Mp(\Lambda)$. By \eqref{gd3ubnta}, we also have
\[
\frac{\nu(M\lambda)}{\nu(\lambda)}
=
e^{2\pi iE(u,M\lambda)}
\]
for all $\lambda \in \Lambda$. By the definition of $u_M$, this means that $u = u_M+\mu$ for some $\mu \in \Lambda^\perp$.

Conversely, if $M \in \Mp(\Lambda)$, $\mu \in \Lambda^\perp$, and $\alpha \in \U(1)$, then \eqref{g7nmwq61} shows that $(M,u_M+\mu,\alpha)$ normalizes $\Lambda_\nu$. This proves the stated description of the normalizer. The exact sequence follows by quotienting by $\Lambda_\nu$.
\end{proof}

\begin{proof}[Proof of Theorem \ref{cczuu62j}]
We first show that the representation of $\mathcal{N}_{\Lambda, \nu}$ on $\code_{\Lambda, \nu}$ is unitary.

For every $U \in \mathcal{N}_{\Lambda, \nu}$, the composition of the representation $\rho : \Heis_{\Lambda, \nu} \to \GL(\code_{\Lambda, \nu})$ with conjugation by $U$ is an irreducible representation of $\Heis_{\Lambda, \nu}$. Moreover, this representation is unitary with respect to the hermitian inner product
\[
\ip{\varphi, \psi}_U \coloneqq \ip{U\varphi, U\psi}.
\]
By the Stone--von Neumann theorem and Schur's lemma, there is a constant $c(U) > 0$ such that
\[
\ip{U\varphi, U \psi} = c(U) \ip{\varphi, \psi}
\]
for all $\varphi, \psi \in \code_{\Lambda, \nu}$.
Moreover, the map
\[
c : \mathcal{N}_{\Lambda, \nu} \too \R_{>0}
\]
is a group homomorphism. We need to show that $c$ is trivial. Since $\Heis_{\Lambda, \nu}$ acts unitarily, we have $c(\Heis_{\Lambda, \nu}) = 1$, so $c$ descends to a homomorphism
\[
c : \mathcal{N}_{\Lambda, \nu}/\Heis_{\Lambda, \nu} \too \R_{>0}.
\]
By Proposition \ref{utnj2ijc}, this quotient is $\Mp(\Lambda)$. Since $\ker(\Mp(\Lambda) \to \Sp(\Lambda)) = \{\pm1\}$ is finite, $c$ further descends to a homomorphism
\[
c : \Sp(\Lambda) \too \R_{>0}.
\]
It then suffices to show that $\Sp(\Lambda)$ has finite abelianization. For the case $n = 1$, we have $\Sp(\Lambda) \cong \Sp(2,\Z)$ and the abelianization $\Sp(2,\Z)^{\mathrm{ab}} = \Z/12\Z$ is finite; see e.g.\ \cite[Theorem 3.8]{ConradSL2Z}. For $n \ge 2$, this follows from \cite[Proposition 6.19(iii)]{margulis1991discrete}; indeed, $\Sp(\Lambda)$ is an arithmetic subgroup of $\Sp(2n,\R)$, so it is a lattice in $\Sp(2n,\R)$ by the Borel--Harish-Chandra theorem.

It follows that the image of \eqref{1jxc4iv1} is contained in the Clifford group $\Cliff_{\Lambda, \nu}$. It remains to show that the restriction $\mathcal{N}_{\Lambda, \nu} \to \Cliff_{\Lambda, \nu}$ is surjective.
Consider the commutative diagram
\[
\begin{tikzcd}
1 \arrow{r} & \Heis_{\Lambda, \nu} \arrow{r} \arrow[equals]{d} & \mathcal{N}_{\Lambda, \nu} \arrow{r}\arrow{d} & \Mp(\Lambda) \arrow{r} \arrow{d} & 1 \\
1 \arrow{r} & \Heis_{\Lambda, \nu} \arrow{r} & \Cliff_{\Lambda, \nu} \arrow{r} & \Sp(\Lambda^\perp/\Lambda) \arrow{r} & 1.
\end{tikzcd}
\]
By Proposition \ref{utnj2ijc} and Proposition \ref{xor7ubv5}, the two rows are exact. By \cite[Theorem 2]{brasch1993lifting}, the third vertical map is surjective. It follows that the second vertical map is also surjective.
\end{proof}

Theorem \ref{cczuu62j} allows us to define the passive Clifford gates.

\begin{definition}
The \defn{passive Clifford group}, denoted $\Cliff_{\Lambda,\nu}^{\passive}$, is the image of $\mathcal{N}_{\Lambda,\nu}^{\passive}$ in $\Cliff_{\Lambda,\nu}$ under the surjection \eqref{j03xt30z}. If an encoding identifies $\Cliff_{\Lambda,\nu}$ with the standard Clifford group $\Cliff_D$, we denote the corresponding subgroup of $\Cliff_D$ by $\Cliff_D^{\passive}$.
\end{definition}

We now give a geometric interpretation of the passive Clifford group. Let $(X,L)$ be the polarized complex abelian variety associated with the GKP code $(\Lambda,\nu)$. Let $\widetilde{\Aut}(X,L)$ denote the group of automorphisms of $L$ preserving the hermitian fibre metric, i.e.\ the group of line bundle isomorphisms $\varphi : L \to L$ covering a biholomorphism $f : X \to X$ and restricting to hermitian isometries on the fibres. In particular, $\widetilde{\Aut}(X,L)$ contains the compact theta group $\G_c(L)$ as the subgroup of automorphisms covering translations. 

Recall that every biholomorphism $f : X \to X$ decomposes as $f = t_x \circ g$, where $g$ fixes the origin. There is then a short exact sequence
\[
1 \too \G_c(L) \too \widetilde{\Aut}(X,L) \too \Aut(X,L) \too 1,
\]
where $\Aut(X,L)$ is the group of biholomorphisms $g : X \to X$ fixing the origin such that $c_1(g^*L) = c_1(L)$.

The group $\widetilde{\Aut}(X,L)$ acts unitarily on $H^0(X,L)$ as follows. If $\varphi \in \widetilde{\Aut}(X,L)$ covers $f : X \to X$, then
\[
(\varphi \cdot \sigma)(x) = \varphi(\sigma(f^{-1}(x)))
\]
for all $\sigma \in H^0(X,L)$ and $x \in X$.

We now relate $\widetilde{\Aut}(X,L)$ to the passive normalizer group. Let
\[
\U(\Lambda) \coloneqq \{M \in \U(V) : M\Lambda = \Lambda\}.
\]
By Proposition \ref{utnj2ijc}, restricted to $\U(V)\ltimes\HH(V)$, the normalizer is
\[
N_{\U(V)\ltimes\HH(V)}(\Lambda_\nu)
=
\{(M,u_M+\mu,\alpha) : M \in \U(\Lambda),\ \mu \in \Lambda^\perp,\ \alpha \in \U(1)\}.
\]
For $(M,w,\alpha) \in N_{\U(V)\ltimes\HH(V)}(\Lambda_\nu)$, define
\[
\varphi_{(M,w,\alpha)} : L \too L
\]
by
\[
\varphi_{(M,w,\alpha)}([v,z])
=
[Mv-w,\alpha e^{\pi(\frac{1}{2}H(w,w)-H(Mv,w))}z].
\]
This is a well-defined element of $\widetilde{\Aut}(X,L)$, and it is trivial when $(M,w,\alpha) \in \Lambda_\nu$.

\begin{theorem}\label{s14jrrx8}
The sequence
\[
1 \too \Lambda_\nu \too N_{\U(V)\ltimes\HH(V)}(\Lambda_\nu) \too \widetilde{\Aut}(X,L) \too 1
\]
is exact. Hence
\[
\mathcal{N}_{\Lambda,\nu}^{\passive} \cong \widetilde{\Aut}(X,L).
\]
Moreover, under the Bargmann transform, the representation of $\mathcal{N}_{\Lambda,\nu}^{\passive}$ on $\code_{\Lambda,\nu}$ corresponds to the natural representation of $\widetilde{\Aut}(X,L)$ on $H^0(X,L)$.
\end{theorem}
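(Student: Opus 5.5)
The proof has four steps: check that the map $(M,w,\alpha)\mapsto\varphi_{(M,w,\alpha)}$ is a homomorphism, compute its kernel, prove surjectivity, and compare the two representations via \eqref{kilzod4v}.

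\textbf{Homomorphism.} Work on $V\times\C$. The lift $\tilde\varphi_{(M,w,\alpha)}(v,z)=(Mv-w,\alpha e^{\pi(\frac12 H(w,w)-H(Mv,w))}z)$ is already given, and the paper asserts it descends to $L$. I will verify directly that $\tilde\varphi_{g}\circ\tilde\varphi_{g'}=\tilde\varphi_{gg'}$ for the group law $(M,u,\alpha)(N,v,\beta)=(MN,u+Mv,\alpha\beta e^{-i\pi E(u,Mv)})$. This is a computation with $H(Mx,My)=H(x,y)$ for $M\in\U(V)$ and $\operatorname{Im}H=E$; it parallels the identity $\varphi_{\mu,\alpha}\circ\varphi_{\tilde\mu,\tilde\alpha}=\varphi_{(\mu,\alpha)(\tilde\mu,\tilde\alpha)}$ from Section~\ref{23yhqu4v}. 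The map is fibrewise isometric for the metric $z\bar w e^{-\pi H(v,v)}$, since
\[
|\alpha|^2e^{\pi(H(w,w)-2\operatorname{Re}H(Mv,w))}e^{-\pi H(Mv-w,Mv-w)}=e^{-\pi H(v,v)}.
\]
So we get a homomorphism $N_{\U(V)\ltimes\HH(V)}(\Lambda_\nu)\to\widetilde{\Aut}(X,L)$.

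\textbf{Kernel.} Suppose $\varphi_{(M,w,\alpha)}=\mathrm{Id}_L$. Then the underlying map $[v]\mapsto[Mv-w]$ is the identity of $X$. Setting $v=0$ gives $w\in\Lambda$. Then $v\mapsto Mv-v$ is a continuous map into the discrete set $\Lambda$ vanishing at $0$, so $M=1$ in $\U(V)$. Since $\U(V)$ embeds in $\Mp(V)$ via the splitting, $M=1$ there as well. Composing with $\varphi_{(1,w,\nu(w)^{-1})}=\mathrm{Id}_L$ reduces to an element $(1,0,\alpha')$, which acts as the scalar $\alpha'$, so $\alpha'=1$. Hence the kernel is exactly $\Lambda_\nu$.

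\textbf{Surjectivity (the main step).} Take $\varphi\in\widetilde{\Aut}(X,L)$ covering $f=t_x\circ g$ with $g(0)=0$.
\begin{enumerate}[label=(\roman*)]
\item Since $g$ is a group automorphism of $X$, it lifts to a $\C$-linear map $M$ on $V$ with $M\Lambda=\Lambda$.
\item The condition $c_1(g^*L)=c_1(L)$ means $M^*H=H$, so $M\in\U(V)$ and hence $M\in\U(\Lambda)$.
\item By Appell--Humbert, $g^*L=L(H,\nu\circ M)$. Choose $u_M$ as in \eqref{g7nmwq61}; the element $(M,u_M,1)$ then normalizes $\Lambda_\nu$ by Proposition~\ref{utnj2ijc}.
\item By construction $\varphi_{(M,u_M,1)}$ is an isometric bundle automorphism covering $[v]\mapsto[Mv-u_M]$.
\item Then $\varphi\circ\varphi_{(M,u_M,1)}^{-1}$ covers a translation and preserves the metric, so it lies in $\G_c(L)$. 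This group is the image of $\Lambda^\perp\times\U(1)$ by the isomorphism $\Heis_{\Lambda,\nu}\cong\G_c(L)$ of Section~\ref{23yhqu4v}.
\end{enumerate}
Multiplying in the group gives a preimage of $\varphi$. The delicate points are (i)–(iii): that holomorphic automorphisms fixing $0$ are induced by linear maps, and that equality of Chern classes is equivalent to $H$-invariance. Both are standard facts about complex tori.

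\textbf{Representations.} An element $f\in\Theta_{\Lambda,\nu}$ corresponds to the section $\sigma[v]=[v,f(v)]$. Computing $\varphi(\sigma(f^{-1}[v]))$ with $f^{-1}[v]=[M^{-1}(v+w)]$ gives
\[
[v,\ \alpha e^{\pi(\frac12H(w,w)-H(v+w,w))}f(M^{-1}(v+w))].
\]
Expanding, $\frac12H(w,w)-H(v+w,w)=-\frac12H(w,w)-H(v,w)$, so this is exactly formula \eqref{kilzod4v} for $\rho(M,w,\alpha)$ on $\E(V)$. That formula was already transported through the Bargmann transform, so the two representations agree.
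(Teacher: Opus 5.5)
Your proof is correct, but your surjectivity step takes a different route from the paper's. The paper handles an arbitrary $\varphi\in\widetilde{\Aut}(X,L)$ directly. It writes the underlying map as $[v]\mapsto[Mv-w]$ and applies Appell--Humbert to $L\cong f^*L$, which gives both $M^*H=H$ and $w\equiv u_M$ modulo $\Lambda^\perp$. It then shows that the leftover multiplier $b:V\to\C^*$ is $\Lambda$-periodic, hence constant by compactness of $X$ and unimodular by the metric condition, so $\varphi=\varphi_{(M,w,\alpha)}$ exactly.

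You instead build one explicit lift $\varphi_{(M,u_M,1)}$ of the linear part and reduce to the compact theta group via the identification $\Heis_{\Lambda,\nu}\cong\G_c(L)$ of Section~\ref{23yhqu4v}. This never requires identifying the translation part of $\varphi$, because the fact that translations covered by elements of $\G(L)$ lie in $K(L)$ takes care of it. It also makes clear that the theorem lifts the exact sequence $1\to\G_c(L)\to\widetilde{\Aut}(X,L)\to\Aut(X,L)\to1$.

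The cost is that the key analytic input, namely that bundle automorphisms over the identity are constant scalars, is hidden inside the surjectivity of $[\mu,\alpha]\mapsto\varphi_{\mu,\alpha}$. The paper only recalls that surjectivity as standard; its own route proves the input inline. Your argument also genuinely depends on the homomorphism property $\varphi_g\circ\varphi_{g'}=\varphi_{gg'}$. The paper never checks this, but your computation using $\operatorname{Im}H=E$ does establish it. Your explicit kernel argument and your computation matching \eqref{kilzod4v} likewise spell out what the paper states in one line.
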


\begin{proof}
Let $\varphi \in \widetilde{\Aut}(X,L)$ cover $f : X \to X$. Then $f$ can be written as
\[
f([v]) = [Mv - w]
\]
for some $M \in \GL(V)$ and $w \in V$. Since $\varphi$ is an isomorphism $L \cong f^*L$, and since $L = L(H,\nu)$, the Appell--Humbert theorem gives
\[
M^*H = H
\]
and
\[
\nu(\lambda) = \nu(M\lambda)e^{-2\pi iE(w,M\lambda)}
\]
for all $\lambda \in \Lambda$. Thus $M \in \U(V)$, $M\Lambda = \Lambda$, and, in the notation of \eqref{g7nmwq61}, we have $w = u_M$ modulo $\Lambda^\perp$.

We claim that there exists $\alpha \in \U(1)$ such that
\begin{equation}\label{6u39ewc1}
\varphi([v,z])
=
[Mv-w,\alpha e^{\pi(\frac{1}{2}H(w,w)-H(Mv,w))}z]
\end{equation}
for all $[v,z] \in L$. Indeed, after factoring out the displayed multiplier, $\varphi$ is given by a holomorphic function $b : V \to \C^*$ of the form
\[
\varphi([v,z])
=
[Mv-w,b(v)e^{\pi(\frac{1}{2}H(w,w)-H(Mv,w))}z].
\]
The condition that this descends to the quotient by $\Lambda$ implies that $b$ is $\Lambda$-periodic. Hence $b$ descends to a holomorphic function on the compact complex torus $X$, and is therefore constant. Since $\varphi$ preserves the hermitian fibre metric, this constant lies in $\U(1)$.

It follows that every element of $\widetilde{\Aut}(X,L)$ is of the form $\varphi_{(M,w,\alpha)}$ for some $(M,w,\alpha) \in N_{\U(V)\ltimes\HH(V)}(\Lambda_\nu)$. Conversely, the formula defining $\varphi_{(M,w,\alpha)}$ gives an element of $\widetilde{\Aut}(X,L)$ for every $(M,w,\alpha)$ in the passive normalizer. Thus we have a surjective homomorphism
\[
N_{\U(V)\ltimes\HH(V)}(\Lambda_\nu) \too \widetilde{\Aut}(X,L).
\]
Its kernel is precisely $\Lambda_\nu$, since $\varphi_{(M,w,\alpha)}$ is the identity if and only if $M = 1$, $w \in \Lambda$, and $\alpha = \nu(w)^{-1}$. This proves the exactness of the sequence and hence the isomorphism.

Finally, the compatibility of the representations follows directly from \eqref{kilzod4v}.
\end{proof}

We can now describe the passive Clifford group geometrically. Recall that $\Aut(X, L)$ preserves $K(L) = \Lambda^\perp/\Lambda \subset X$ and the commutator map, giving a group homomorphism
\[
\Phi_{X, L}: \Aut(X, L) \too \Sp(K(L)).
\]

\begin{theorem}\label{fi2al08e}
Let $(X, L)$ be a polarized complex abelian variety of type $D = \diag(d_1,\ldots,d_n)$ with theta structure. Then there is a group isomorphism
\begin{equation}\label{92ijm96l}
\im(\Phi_{X, L}) \cong \Cliff_D^{\passive}/\Heis_D.
\end{equation}
If $d_1 \ge 3$, then $\Phi_{X, L}$ is injective, so
\[
\Aut(X, L) \cong \Cliff_D^\passive/\Heis_D.
\]
\end{theorem}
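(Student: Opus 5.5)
Using Theorem \ref{s14jrrx8}, we identify $\mathcal{N}_{\Lambda,\nu}^{\passive}$ with $\widetilde{\Aut}(X,L)$, compatibly with the actions on $\code_{\Lambda,\nu}\cong H^0(X,L)$. Via the encoding (Theorem \ref{p2fyb03t}), $\Cliff_D^{\passive}$ is then the image of $\widetilde{\Aut}(X,L)$ in $\Cliff_{\Lambda,\nu}$. We compose with the quotient map $\Cliff_{\Lambda,\nu}\to \Cliff_{\Lambda,\nu}/\Heis_{\Lambda,\nu}\cong \Sp(K)$ from Proposition \ref{xor7ubv5}, where $K=\Lambda^\perp/\Lambda=K(L)$. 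This gives a homomorphism $\widetilde{\Aut}(X,L)\to\Sp(K(L))$. Its image is $\Cliff^{\passive}/(\Cliff^{\passive}\cap\Heis)$. Since every element of $\Heis_{\Lambda,\nu}$ is induced by a translation $(1,\mu,\alpha)$ with $\mu\in\Lambda^\perp$, which is passive, we get $\Heis_D\subset\Cliff_D^\passive$. Hence this image is exactly $\Cliff_D^{\passive}/\Heis_D$.

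Next, we check that this map factors through the exact sequence $1\to\G_c(L)\to\widetilde{\Aut}(X,L)\to\Aut(X,L)\to1$ and agrees with $\Phi_{X,L}$. Take $\varphi=\varphi_{(M,w,\alpha)}$. Conjugating a Heisenberg element $[\mu,\beta]$ by $(M,w,\alpha)$ in the extended metaplectic group gives $[M\mu,\beta']$ for some phase $\beta'$, by the group law of Section \ref{zz4171u8}. So the induced element of $\Sp(K)$ is $[\mu]\mapsto[M\mu]$. This depends only on the linear part $M$, i.e.\ on the image $g\in\Aut(X,L)$ of $\varphi$, and it is the action of $g$ on $K(L)\subset X$. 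In particular, $\G_c(L)$ maps trivially. We also need surjectivity of $\widetilde{\Aut}(X,L)\to\Aut(X,L)$, which is the quoted exact sequence. Consequently $\im(\Phi_{X,L})$ equals the image of $\widetilde{\Aut}(X,L)$ in $\Sp(K)$, which is $\Cliff_D^\passive/\Heis_D$. This gives \eqref{92ijm96l}. A small point is that the identification of $\Cliff_{\Lambda,\nu}/\Heis_{\Lambda,\nu}$ with $\Sp(K)$ must use the same convention (conjugation action on $K$) as $\Phi_{X,L}$; this is immediate from the construction.

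For injectivity when $d_1\ge 3$, suppose $g\in\Aut(X,L)$ acts trivially on $K(L)$, with linear part $M\in\U(\Lambda)$. Since $d_1\mid d_i$ and $\Lambda^\perp\supset \frac1{d_1}\Lambda$, the subgroup $\frac1{d_1}\Lambda/\Lambda\cong X[d_1]$ lies in $K(L)$. So $M$ acts trivially on $\Lambda/d_1\Lambda$, i.e.\ $M\equiv 1 \pmod{d_1}$ on $\Lambda$. Moreover $M$ preserves $H$ and $\Lambda$, so it has finite order: it lies in the discrete group $\GL(\Lambda)$ intersected with the compact group $\U(V)$. We then invoke Serre's lemma: a finite-order integral matrix congruent to $1$ modulo $N\ge3$ is the identity. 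So $M=1$ and $g=\mathrm{id}$. This Serre-lemma step is the only nonformal ingredient. If citing it is undesirable, we would reprove it: if $M\neq 1$, replace $M$ by a power of prime order $p$ and write $M=1+N^kA$ with $A\not\equiv0 \pmod N$. Expanding $M^p=1$ with the binomial theorem gives a contradiction modulo $N^{k+1}$, except when $p=2$ and $N=2$; that case is excluded by $N\ge3$, after reducing to a prime power dividing $N$ with $N$ equal to $4$ or an odd prime.
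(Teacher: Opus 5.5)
Your proposal is correct and follows the paper's proof. It uses the same morphism of exact sequences, $\widetilde{\Aut}(X,L)\to\Cliff^{\passive}_{\Lambda,\nu}$ lying over $\Aut(X,L)\to\Sp(K)$, and you additionally verify what the paper leaves implicit: via $[\mu]\mapsto[M\mu]$, that the third vertical map is $\Phi_{X,L}$, and that $\Heis_{\Lambda,\nu}\subset\Cliff^{\passive}_{\Lambda,\nu}$. For injectivity you make the same reduction $X_{d_1}\subset K(L)$, unpacking the paper's citation of Birkenhake--Lange, Cor.~5.1.10, into finiteness of $\U(\Lambda)$ plus Serre's lemma.

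Only your optional reproof of Serre's lemma is imprecise. Write $M=1+q^kA$ for a prime $q\mid N$ with $A\not\equiv 0 \pmod q$, and $k\ge 2$ when $q=2$. When $p=q$, the contradiction appears modulo $q^{k+2}$, not $q^{k+1}$. Expanding literally in powers of $N=4$ gives no contradiction when $p=2$ and $A\equiv 2\pmod 4$.
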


\begin{proof}
Let
\[
\Sp(K)^\passive \coloneqq \im(\Cliff_{\Lambda, \nu}^\passive \too \Sp(K)).
\]
By Theorem \ref{s14jrrx8} and Proposition \ref{xor7ubv5}, we have a morphism of short exact sequences
\[
\begin{tikzcd}
1 \arrow{r} & \G_c(L) \arrow{r} \arrow{d}{\cong} & \widetilde{\Aut}(X, L) \arrow{r} \arrow{d} & \Aut(X, L) \arrow{r} \arrow{d} & 1 \\
1 \arrow{r} & \Heis_{\Lambda, \nu} \arrow{r} & \Cliff_{\Lambda, \nu}^\passive \arrow{r} & \Sp(K)^\passive \arrow{r} & 1.
\end{tikzcd}
\]
By Theorem \ref{cczuu62j}, the second vertical map is surjective, and hence so is the third. Then the isomorphism \eqref{92ijm96l} follows by identifying $\Heis_{\Lambda, \nu} \cong \Heis_D$ and $\Cliff_{\Lambda, \nu} \cong \Cliff_D$.

For a positive integer $k$, set $X_k \coloneqq \{x \in X : k x = 0\}$. Then for all $k \ge 3$,  the restriction map $\Aut(X, L) \to \Aut(X_k)$ is injective \cite[Corollary 5.1.10]{BirkenhakeLange2004ComplexAbelianVarieties}. Since $d_1$ is the smallest elementary divisor, we have $X_{d_1} \subset K(L)$. It follows that $\Phi_{X, L}$ is injective when $d_1 \ge 3$.
\end{proof}

Thus the passive Clifford gates of a GKP code have a purely geometric description. The Pauli gates come from the compact theta group $\G_c(L)$, while the additional passive Clifford gates are controlled by the image of the automorphism group $\Aut(X,L)$ in $\Sp(K)$. Consequently, for a fixed polarization type, the problem of producing GKP codes with many readily implementable passive Clifford gates becomes the problem of finding polarized complex abelian varieties whose image $\im(\Aut(X,L) \to \Sp(K))$ is large.

\begin{corollary}\label{zdzp6h79}
Let $C$ be a Riemann surface of genus $g \ge 2$ and let $(X, L)$ be the Jacobian of $C$ with its canonical principal polarization $L$. For all $d \ge 3$ and encoding $\C^{\otimes D} \cong H^0(X, L^d)$, where $D = \diag(d, \ldots, d)$, we have
\[
\Cliff_D^\passive/\Heis_D \cong \begin{cases} \Aut(C) & \text{ if $C$ is hyperelliptic} \\ \{\pm1\} \times \Aut(C) & \text{ if $C$ is non-hyperelliptic}.
\end{cases}
\]
\end{corollary}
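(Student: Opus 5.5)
The plan is to reduce the corollary to Theorem \ref{fi2al08e} and then identify $\Aut(X,L^d)$ using the Torelli theorem.

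\emph{Type and injectivity.} Since $L$ is a principal polarization of type $(1,\ldots,1)$, the line bundle $L^d$ has first Chern class $d\,c_1(L)$. Its type is therefore $(d,\ldots,d)$, and $K(L^d)=X_d$ is the $d$-torsion subgroup. Because $d_1=d\ge 3$, the second part of Theorem \ref{fi2al08e} applies. It gives
\[
\Aut(X,L^d)\cong \Cliff_D^\passive/\Heis_D,
\]
and this holds for any theta structure, hence for any encoding $\C^{\otimes D}\cong H^0(X,L^d)$.

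\emph{Reducing to the principal polarization.} Next I would check that $\Aut(X,L^d)=\Aut(X,L)$. An automorphism $g$ fixing the origin satisfies $c_1(g^*L^d)=c_1(L^d)$ if and only if $d\,g^*c_1(L)=d\,c_1(L)$. Since $H^2(X,\Z)$ is torsion-free, this holds if and only if $g^*c_1(L)=c_1(L)$. Equivalently, in lattice terms, $g$ preserves $dE$ if and only if it preserves $E$.

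\emph{Torelli.} It remains to compute the automorphism group of the principally polarized Jacobian $(J(C),\Theta)$. This is the classical consequence of the strong form of Torelli's theorem: every automorphism of $C$ induces one of $(J(C),\Theta)$, giving a map $\Aut(C)\to\Aut(J(C),\Theta)$, injective for $g\ge2$.

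\begin{itemize}
\item If $C$ is hyperelliptic, then $-1$ is induced by the hyperelliptic involution, and the map $\Aut(C)\to\Aut(J(C),\Theta)$ is an isomorphism.
\item If $C$ is non-hyperelliptic, every automorphism of $(J(C),\Theta)$ is $\pm\sigma_*$ for a unique $\sigma\in\Aut(C)$. Moreover $-1$ is not of the form $\sigma_*$ and is central, so $\Aut(J(C),\Theta)\cong\{\pm1\}\times\Aut(C)$.
\end{itemize}

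I would cite this result, for instance from Birkenhake--Lange (Torelli theorem, Chapter 11), rather than reprove it. This citation is the main step, and I expect it to be the main obstacle. The subtle points are the injectivity of $\Aut(C)\to\Aut(J(C))$ for $g\ge 2$ and the fact that $-1$ is not induced by any curve automorphism in the non-hyperelliptic case. Both are part of the standard statement, so the proof reduces to correctly invoking it together with Theorem \ref{fi2al08e}.
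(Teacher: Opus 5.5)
Your proof is correct and follows the paper's argument: apply Theorem \ref{fi2al08e} with $d_1 = d \ge 3$ to get $\Cliff_D^\passive/\Heis_D \cong \Aut(X,L^d)$, note that $\Aut(X,L^d)=\Aut(X,L)$, and conclude with the precise form of Torelli's theorem. You spell out the torsion-freeness argument for $\Aut(X,L^d)=\Aut(X,L)$, which the paper leaves implicit, and you cite Birkenhake--Lange where the paper cites Serre's appendix in Lauter--Serre; neither changes the argument.
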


\begin{proof}
By Theorem \ref{fi2al08e}, $\Cliff_D^\passive/\Heis_D \cong \Aut(X, L^d) = \Aut(X, L)$.
The result then follows by Serre's ``precise form'' of the Torelli theorem \cite[Appendix]{LauterSerre2001}.
\end{proof}

It is then natural to look for examples among Jacobians of curves with large automorphism groups. The Klein quartic provides a particularly useful example, yielding a three-mode family of GKP codes with many non-Pauli passive Clifford gates.

\begin{proposition}\label{dhnyif6v}
Let $(X, L)$ be the Jacobian of the Klein quartic with its canonical principal polarization. For any $d \ge 2$ and encoding $\C^d \otimes \C^d \otimes \C^d \cong H^0(X, L^d)$, we have
\[
|\Cliff_D^\passive/\Heis_D| = 
\begin{cases} 
168 & \text{ if } d = 2 \\
336 & \text{ if } d \ge 3.
\end{cases}
\]
\end{proposition}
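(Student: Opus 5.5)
We will compute $|\im(\Phi_{X,L^d})|$ using Theorem \ref{fi2al08e}, which identifies $\Cliff_D^\passive/\Heis_D$ with the image of $\Aut(X,L^d)\to\Sp(K(L^d))$.

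The first step is to determine $\Aut(X,L^d)$. Since $c_1(L^d)=d\,c_1(L)$, an automorphism fixing the origin preserves $c_1(L^d)$ if and only if it preserves $c_1(L)$, so $\Aut(X,L^d)=\Aut(X,L)$. The Klein quartic $C$ is non-hyperelliptic with $\Aut(C)\cong \mathrm{PSL}(2,7)$ of order $168$. By Serre's precise form of the Torelli theorem, as used in Corollary \ref{zdzp6h79}, we get
\[
\Aut(X,L)\cong\{\pm1\}\times \mathrm{PSL}(2,7),
\]
which has order $336$. For $d\ge3$, Theorem \ref{fi2al08e} gives injectivity of $\Phi_{X,L^d}$, and hence the answer $336$. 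Alternatively, the case $d\ge3$ follows directly from Corollary \ref{zdzp6h79}.

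It remains to treat $d=2$. Here $K(L^2)=X_2=\Lambda/2\Lambda$, where we write $X=V/\Lambda$ with $\Lambda$ the principally polarized lattice, so that $L^2$ corresponds to the lattice $\Lambda$ with form $2E$ and $K(L^2)=\tfrac12\Lambda/\Lambda$. The map $\Phi$ is then reduction $\Aut(X,L)\to \Sp(\Lambda/2\Lambda)\cong\Sp(6,\mathbb{F}_2)$. The kernel contains $-1$, since $-1\equiv 1 \pmod 2$. We will show that the kernel is exactly $\{\pm1\}$. The kernel is a normal subgroup of $\{\pm1\}\times \mathrm{PSL}(2,7)$. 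Because $\mathrm{PSL}(2,7)$ is simple, its normal subgroups are contained in $\{\pm1\}\times\{1\}$, or are $\{\pm1\}\times \mathrm{PSL}(2,7)$, $\{1\}\times \mathrm{PSL}(2,7)$, or the graph-type subgroups. The graph-type subgroups do not occur, since $\mathrm{PSL}(2,7)$ has no index-$2$ subgroup. So it suffices to show that $\mathrm{PSL}(2,7)$ does not act trivially on $H_1(X,\mathbb{F}_2)=H_1(C,\mathbb{F}_2)$.

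To show this, we use the standard fact (Serre's lemma) that an automorphism of finite order of an abelian variety acting trivially on $X_n$ for $n\ge3$ is the identity. For $n=2$, the corresponding fact is weaker: such an automorphism $g$ satisfies $g^2=1$. If all of $\mathrm{PSL}(2,7)$ acted trivially mod $2$, then every element of order $7$ would have to satisfy $g^2=1$, which is a contradiction. Thus the kernel is $\{\pm1\}$ and the image has order $336/2=168$.

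The main obstacle is the rigorous justification of the mod-$2$ statement, namely that an element of order $7$ cannot act trivially on $X_2$. To prove it directly, suppose $g^7=1$ and $g\equiv 1 \pmod 2$ on $\Lambda$. Then $g=1+2A$ with $A$ integral. Every eigenvalue $\zeta$ of $g$ is a $7$th root of unity with $(\zeta-1)/2$ an algebraic integer. But $\zeta-1$ has norm $7$ when $\zeta\neq1$, which is not divisible by $2^{6}$. Hence $\zeta=1$. Since $g$ has finite order, it is diagonalizable, so $g=1$, which is the desired contradiction.
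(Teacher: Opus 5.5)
Your proof is correct. For $d\ge 3$ it is the paper's argument. For $d=2$ your overall structure also matches the paper: $-1$ lies in the kernel of $\Aut(X,L)\to\Sp(K(L^2))=\Sp(X_2)$, and the simplicity of $\mathrm{PSL}(2,7)$ reduces the claim to exhibiting one automorphism that acts nontrivially on $X_2$.

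The two proofs differ only in that witness. The paper cites Markushevich--Moreau for an automorphism of $(X,L)$ with exactly $16$ fixed points, which therefore cannot fix all $64$ points of $X_2$. You instead take any element $g$ of order $7$, which exists by Cauchy's theorem. You then show that $g$ does not act trivially on $\Lambda/2\Lambda$, via the norm computation: $N_{\mathbb{Q}(\zeta_7)/\mathbb{Q}}(\zeta_7-1)=7$ is not divisible by $2^6$. This is the relevant case of Minkowski's lemma that torsion in the level-$2$ congruence kernel of $\GL_{2g}(\Z)$ has order at most $2$.

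Your route is elementary and self-contained. It uses only the faithfulness of the rational representation, and any element of odd prime order $p$ would serve equally well, since $p$ is not divisible by $2^{p-1}$. With the full Minkowski lemma it even shows that $\ker\Phi_{X,L^2}$ is always an elementary abelian $2$-group. So the conclusion $|\im\Phi_{X,L^2}|=|\Aut(C)|$ holds for any non-hyperelliptic curve whose automorphism group has no nontrivial normal elementary abelian $2$-subgroup. The paper's route is shorter on the page, but it rests on an external fixed-point computation specific to the Klein quartic.
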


\begin{proof}
The curve $C$ is a Hurwitz curve of genus $g = 3$, i.e.\ it attains the Hurwitz bound $|{\Aut(C)}| = 84(g - 1) = 168$ \cite{elkies1999klein,levy2001eightfold}.
Since $C$ is non-hyperelliptic, the case $d \ge 3$ follows by Corollary \ref{zdzp6h79}.

Suppose now that $d = 2$. Since $L$ is principal, the map $\phi_L : X \to \widehat{X}$ is an isomorphism. Moreover, $\phi_{L^2} = 2\phi_L$, so $K(L^2) = X_2$. It follows that $-\mathrm{Id}_X$ acts trivially on $K(L^2)$, so $-\mathrm{Id}_X \in \ker(\Phi_{X, L^2})$. By Torelli's theorem \cite[Appendix]{LauterSerre2001}, $\Aut(X, L)/\ip{\pm\mathrm{Id}_X} \cong \Aut(C)$, so $\Phi_{X, L^2}$ descends to $\Aut(C) \to \Sp(K(L^2))$. Since $\Aut(C) \cong \operatorname{PSL}(2, 7)$ is simple \cite{elkies1999klein,levy2001eightfold}, it suffices to show that there exists $f \in \Aut(X, L)$ acting non-trivially on $K(L^2)$. By \cite[Proposition 2.1(ii)]{MarkushevichMoreau2023}, there exists $f \in \Aut(X, L)$ with exactly 16 fixed points. Since $|X_2| = |(\Z/2\Z)^{2g}| = 64$, this automorphism cannot fix $K(L^2)=X_2$ pointwise. It follows that $|{\im(\Phi_{X,L^2})}| = |{\Aut(C)}| = 168$.
\end{proof}

\section{Quantum error correction}\label{k3dqdqez}

We now turn to the error correction property \ref{bd6ih4k1}. The preceding sections described the structure of the code space and the implementation of logical gates. The remaining question is how the code responds to noise. For GKP codes, the relevant errors are translation operators $T_v$, where $v \in V$ is sampled from a probability distribution.

Let $(\Lambda,\nu)$ be a GKP code and let $\psi \in \code_{\Lambda,\nu}$ be a code state. If a displacement error $v \in V$ occurs, the state becomes $T_v\psi$. Measuring the stabilizer group $\Lambda_\nu$ does not reveal $v$ itself, but only its class modulo $\Lambda^\perp$. A decoder must therefore choose a representative of this coset and apply the inverse displacement. If the residual displacement lies in $\Lambda$, the original state is recovered up to phase. If it lies in $\Lambda^\perp \setminus \Lambda$, the correction produces a non-trivial Pauli operator on the code space; this is a logical error.

The purpose of this section is to formulate the decoding problem precisely, first in the language of lattices and probability measures, and then in geometric terms. For a given noise distribution, we consider decoders that maximize the probability of avoiding a logical error; these are the maximum likelihood decoders. This leads to the robustness of a GKP code, defined as the largest possible probability of successful decoding for that noise distribution. We then specialize to centered Gaussian noise of variance $\sigma^2$ and prove that, in the small-variance limit, the logical error probability is governed by the shortest vector in $\Lambda^\perp \setminus \Lambda$. Finally, we reinterpret the resulting notions and estimates in the language of complex abelian varieties.

\subsection{Decoding}

Let $(\Lambda,\nu)$ be a GKP code in $V$. The stabilizer group
\[
S_{\Lambda,\nu} = \{\nu(\lambda)^{-1}T_\lambda : \lambda \in \Lambda\}
\]
is an abelian group of commuting operators on $\S'(\R^n)$. In quantum-mechanical terms, such a commuting family can be measured simultaneously on a state $\psi \in \S'(\R^n)$. Although $S_{\Lambda,\nu}$ is infinite, it is finitely generated, so it suffices to measure a finite generating set. In particular, if $\psi \in \S'(\R^n)$ is a simultaneous eigenstate of $S_{\Lambda,\nu}$, i.e.\
\[
\nu(\lambda)^{-1}T_\lambda \psi = \chi(\lambda)\psi
\]
for all $\lambda \in \Lambda$, where $\chi(\lambda) \in \U(1)$, then the measurement recovers the eigenvalues, or equivalently produces a character
\[
\chi : \Lambda \too \U(1).
\]
Since $\psi$ is already an eigenstate of $S_{\Lambda, \nu}$, it is left unchanged by this measurement.

In particular, if $\psi \in \code_{\Lambda,\nu}$ is a code state, then, by definition, $\psi$ is a simultaneous $+1$ eigenstate of $S_{\Lambda,\nu}$. Thus measuring the stabilizer on $\psi$ gives the trivial character. If a displacement error $v \in V$ has been applied, then the perturbed state $T_v\psi$ is still a simultaneous eigenstate for $S_{\Lambda,\nu}$, but with shifted eigenvalues. Indeed, for all $\lambda \in \Lambda$, we have
\[
\nu(\lambda)^{-1}T_\lambda T_v \psi
=
e^{2\pi iE(v,\lambda)}T_v\psi.
\]
Thus measuring the stabilizer on $T_v\psi$ gives the character
\[
\lambda \mtoo e^{2\pi iE(v,\lambda)}
\]
of $\Lambda$. Equivalently, the measurement determines the coset
\[
v + \Lambda^\perp \in V/\Lambda^\perp.
\]
In other words, the measurement detects the displacement error only modulo $\Lambda^\perp$.
The result $v + \Lambda^\perp$ of this measurement is called the \defn{syndrome}.

The goal of decoding is then to choose a representative $u \in v+\Lambda^\perp$ and apply the inverse displacement to $T_v\psi$. If $\mu \coloneqq v-u \in \Lambda^\perp$, then
\[
T_{-u}T_v\psi = e^{i\theta}T_\mu\psi
\]
for some phase $e^{i\theta}$. Since $\mu \in \Lambda^\perp$, the operator $T_\mu$ preserves the code space and induces a Pauli gate. The correction succeeds precisely when this Pauli gate is trivial, equivalently when $\mu \in \Lambda$. In that case, the corrected state is proportional to $\psi$, which is the appropriate notion of recovery since global phases do not affect quantum states. If $\mu \in \Lambda^\perp \setminus \Lambda$, the correction produces a non-trivial Pauli gate on the code space; this is a \defn{logical error}.

Since changing $u$ by an element of $\Lambda$ only changes the corrected state by a global phase, a decoder only needs to choose an element of $V/\Lambda$ lying over the measured coset in $V/\Lambda^\perp$. Thus decoding is naturally described in terms of the covering map
\[
\phi : V/\Lambda \too V/\Lambda^\perp.
\]

\begin{definition}
A \defn{decoder} is a measurable section of $\phi$, i.e.\ a measurable map
\[
\mathcal{D} : V/\Lambda^\perp \too V/\Lambda
\]
such that
\[
\phi \circ \mathcal{D} = \mathrm{Id}_{V/\Lambda^\perp}.
\]
The set of all decoders is denoted
\[
\mathrm{Dec}_{\Lambda}
\coloneqq
\{\mathcal{D} : V/\Lambda^\perp \to V/\Lambda :
\phi \circ \mathcal{D} = \mathrm{Id}_{V/\Lambda^\perp}
\text{ and }\mathcal{D}\text{ is measurable}\}.
\]
\end{definition}

Given a decoder $\mathcal{D}$ and an error $v \in V$, the correction succeeds precisely when
\[
\mathcal{D}(v+\Lambda^\perp) = v+\Lambda,
\]
or equivalently, when $\pi(v) \in \im \mathcal{D}$, where
\[
\pi : V \too V/\Lambda
\]
is the quotient map.
Indeed, this means that the chosen representative differs from the true error by an element of $\Lambda$, and hence that the residual Pauli gate is trivial.

Let $\rho$ be a Borel probability measure on $V$ describing the distribution of displacement errors. Then the probability of successful decoding using $\mathcal{D}$ is
\[
\rho(\{v \in V : \mathcal{D}(v+\Lambda^\perp) = v+\Lambda\}) = \pi_*\rho(\im \mathcal{D}),
\]
where $\pi_*\rho$ is the pushforward measure of $\rho$ to $V/\Lambda$.

\begin{definition}\label{aszc1bm9}
Let $(\Lambda,\nu)$ be a GKP code in $V$ and let $\rho$ be a Borel probability measure on $V$.
The \defn{robustness} of $(\Lambda,\nu)$ with respect to $\rho$ is
\[
\mathscr{R}_\rho(\Lambda)
\coloneqq
\sup_{\mathcal{D}\in \mathrm{Dec}_{\Lambda}}
\pi_*\rho(\im\mathcal{D}).
\]
A \defn{maximum likelihood decoder}, or \defn{MLD}, is a decoder attaining this supremum.
The \defn{fragility} of $(\Lambda,\nu)$ with respect to $\rho$ is
\[
\mathscr{F}_\rho(\Lambda)
\coloneqq
1-\mathscr{R}_\rho(\Lambda).
\]
\end{definition}

Thus $\mathscr{R}_\rho(\Lambda)$ is the largest possible probability of avoiding a logical error, while $\mathscr{F}_\rho(\Lambda)$ is the corresponding optimal probability of a logical error.

\begin{example}\label{tenixxix}
Take any measure $\rho$ on $V$ such that $\pi_*\rho$ is the Haar measure, i.e.\ the uniform measure on $V/\Lambda$ corresponding to the volume form induced by $H$. We claim that
\begin{equation}\label{173okzhx}
\mathscr{R}_\rho(\Lambda) = \frac{1}{|\Lambda^\perp/\Lambda|}.
\end{equation}
Indeed, let $\mathcal{D}$ be any decoder. The translates $\im\mathcal{D} + k$, for $k \in K = \Lambda^\perp/\Lambda$, are pairwise disjoint and cover $V/\Lambda$. Since $\pi_*\rho$ is the Haar measure, all these translates have the same measure. Therefore
\[
1 = \sum_{k\in K} \pi_*\rho(\im\mathcal{D}+k) = |K|\pi_*\rho(\im\mathcal{D}).
\]
It follows that every decoder has the same success probability $1/|K|$, proving \eqref{173okzhx}.
\end{example}

Example \ref{tenixxix} represents the worst possible case, as shown by the following result.

\begin{proposition}\label{3d9mrs7s}
For every Borel probability measure $\rho$ on $V$, we have
\[
\mathscr{R}_\rho(\Lambda)\ge \frac{1}{|\Lambda^\perp/\Lambda|}.
\]
\end{proposition}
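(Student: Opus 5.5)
The plan is to run the averaging argument of Example \ref{tenixxix} for an arbitrary measure. Fix any measurable decoder $\mathcal{D}_0 \in \mathrm{Dec}_\Lambda$. A measurable section exists: choose a Borel fundamental domain for $\Lambda^\perp$ in $V$ and project it to $V/\Lambda$. Write $K = \Lambda^\perp/\Lambda$, a finite group acting on $V/\Lambda$ by translations. For each $k \in K$, the translated map $\mathcal{D}_k \coloneqq \mathcal{D}_0 + k$ is again a measurable section of $\phi$, because $\phi(x + k) = \phi(x)$ for $k \in \Lambda^\perp/\Lambda$. Hence each $\mathcal{D}_k$ is a decoder, and its image is $\im\mathcal{D}_k = \im\mathcal{D}_0 + k$.

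The key step is to show that the sets $\im\mathcal{D}_0 + k$, for $k \in K$, form a partition of $V/\Lambda$ into Borel sets. First, every fibre $\phi^{-1}(y)$ is a $K$-torsor and contains $\mathcal{D}_0(y)$, so it equals $\{\mathcal{D}_0(y) + k : k \in K\}$; this gives the covering. Second, if $\mathcal{D}_0(y) + k = \mathcal{D}_0(y') + k'$, then applying $\phi$ gives $y = y'$, and then $k = k'$ because $K$ acts freely; this gives disjointness.

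The one technical point is measurability of $\im\mathcal{D}_0$. I would handle it by noting that $\im\mathcal{D}_0 = \{x \in V/\Lambda : \mathcal{D}_0(\phi(x)) = x\}$. This is the preimage of the diagonal under the measurable map $x \mapsto (\mathcal{D}_0(\phi(x)), x)$, and the diagonal is closed in the second countable space $(V/\Lambda)^2$, so the set is Borel.

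Summing over the partition gives
\[
1 = \pi_*\rho(V/\Lambda) = \sum_{k\in K}\pi_*\rho(\im\mathcal{D}_k).
\]
Therefore some $k$ satisfies $\pi_*\rho(\im\mathcal{D}_k) \ge 1/|K|$. Taking the supremum over decoders yields $\mathscr{R}_\rho(\Lambda) \ge 1/|\Lambda^\perp/\Lambda|$.

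I do not expect a real obstacle. The only care needed is the measurability of the images, handled above, and the fact that $|K|$ is finite, which holds because $\Lambda$ has full rank.
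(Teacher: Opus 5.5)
Your proof is correct and is essentially the paper's argument: you translate a decoder by the elements of $K = \Lambda^\perp/\Lambda$, observe that the images $\im\mathcal{D} + k$ partition $V/\Lambda$, and apply pigeonhole to $1 = \sum_{k\in K}\pi_*\rho(\im\mathcal{D}_k)$. Your remarks on the existence of a Borel decoder and on the measurability of $\im\mathcal{D}_0$ make explicit details that the paper leaves implicit.
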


\begin{proof}
Let $\mathcal{D}$ be any decoder. For each $k \in K$, the map $\mathcal{D}_k \coloneqq \mathcal{D} + k$ is another decoder. Therefore, as in Example \ref{tenixxix}, we have
\[
1 = \sum_{k \in K}\pi_*\rho(\im\mathcal{D} + k) = \sum_{k \in K} \pi_*\rho(\im \mathcal{D}_k).
\]
It follows that $\pi_*\rho(\im \mathcal{D}_k) \ge 1/|K|$ for some $k\in K$.
\end{proof}

One way to construct a decoder is to choose a fundamental domain $Q \subset V$
for $\Lambda^\perp$ and define $\mathcal{D}_Q : V/\Lambda^\perp \cong Q \hookrightarrow V \to V/\Lambda$. For instance, if $Q$ is contained in the closed Voronoi cell of $\Lambda^\perp$, then $\mathcal{D}_Q$ is known as the \defn{maximum energy decoder} (MED)~\cite{conrad2022gottesman}. On the other hand, the maximum likelihood decoder is typically not of this form.

We now describe how to construct maximum likelihood decoders when the noise distribution is given by a sufficiently regular density. Suppose that $\rho$ is induced by a probability density function $f : V \to [0,\infty)$ with respect to the volume form on $V$. Assume, for simplicity, that $f$ is a Schwartz function. Then the periodization
\begin{equation}\label{0irhdvp5}
F : V/\Lambda \too \R, \quad
F(v) = \sum_{\lambda \in \Lambda} f(v+\lambda),
\end{equation}
converges absolutely and defines a smooth function on $V/\Lambda$.

The function $F$ is the density of the pushforward measure $\pi_*\rho$ on $V/\Lambda$. Therefore the probability of success of a decoder $\mathcal{D}$ can be computed by integrating $F$ over the image of $\mathcal{D}$. This gives a simple criterion for maximum likelihood decoding: for each measured syndrome $y \in V/\Lambda^\perp$, choose a point in the finite fibre $\phi^{-1}(y)$ where $F$ is maximal.

\begin{proposition}\label{rq2fhiu0}
Assume that $\rho$ is induced by a Schwartz probability density function $f : V \to [0,\infty)$, and let $F$ be the corresponding periodized density \eqref{0irhdvp5}. Then, for every decoder $\mathcal{D} \in \mathrm{Dec}_\Lambda$, the probability of no logical error using $\mathcal{D}$ is given by
\[
\pi_*\rho(\im \mathcal{D})
=
\int_{V/\Lambda^\perp} F(\mathcal{D}(y))dy,
\]
where $dy$ is the volume form on $V/\Lambda^\perp$ induced by the hermitian metric. In particular, if $\mathcal{M} \in \mathrm{Dec}_\Lambda$ satisfies
\[
F(\mathcal{M}(y))
=
\max\{F(x) : x \in \phi^{-1}(y)\}
\]
for almost all $y \in V/\Lambda^\perp$, then $\mathcal{M}$ is a maximum likelihood decoder.
\end{proposition}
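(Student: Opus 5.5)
The plan is to reduce everything to a change of variables along the finite covering $\phi : V/\Lambda \to V/\Lambda^\perp$. First, since $f$ is a Schwartz density, the series \eqref{0irhdvp5} converges absolutely and uniformly on compacts. Unfolding the integral over $V$ into a sum over $\Lambda$-translates of a fundamental domain (Tonelli) shows that $F$ is the density of $\pi_*\rho$ with respect to the volume form on $V/\Lambda$ induced by $H$: for every Borel $A \subset V/\Lambda$,
\[
\pi_*\rho(A) = \int_A F(x)\,dx.
\]

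Second, we handle the image of a decoder. Because $\phi$ is a local isometry and a $|K|$-sheeted covering, with $K = \Lambda^\perp/\Lambda$, we choose a measurable fundamental domain $Q \subset V$ for $\Lambda^\perp$. Then $V/\Lambda$ is the disjoint union of the images of the translates $Q + k$, for $k$ ranging over representatives of $K$, and each of these maps isometrically onto $V/\Lambda^\perp$ under $\phi$. A measurable section $\mathcal{D}$ therefore corresponds to a measurable map $\kappa : Q \to K$ with $\mathcal{D}(y) = [y + \kappa(y)]$ for $y \in Q$. Consequently $\im\mathcal{D}$ is the disjoint union over $k \in K$ of the sets $\{[y+k] : y \in Q,\ \kappa(y) = k\}$. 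In particular $\im\mathcal{D}$ is measurable, being a finite union of images of measurable sets under injective isometric pieces. Integrating $F$ over each piece and using translation invariance of the volume form gives
\[
\pi_*\rho(\im\mathcal{D}) = \sum_{k} \int_{\{\kappa = k\}} F([y+k])\,dy = \int_Q F(\mathcal{D}(y))\,dy,
\]
which is the stated formula once $Q$ is identified with $V/\Lambda^\perp$ via the induced measure.

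Third, for the maximum likelihood statement, note that for almost every $y$ and every decoder $\mathcal{D}$ we have $F(\mathcal{D}(y)) \le \max_{\phi^{-1}(y)} F = F(\mathcal{M}(y))$. Integrating this inequality gives $\pi_*\rho(\im\mathcal{D}) \le \pi_*\rho(\im\mathcal{M})$, so $\mathcal{M}$ attains the supremum in Definition \ref{aszc1bm9}.

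The only delicate point is measure-theoretic bookkeeping: the measurability of $\im\mathcal{D}$, and the identification of the measure $dy$ on $V/\Lambda^\perp$ with Lebesgue measure on $Q$. Both are handled by the piecewise-isometric decomposition above, so no real obstacle is expected. One might also remark that such an $\mathcal{M}$ exists, by choosing the maximizing $k$ via a fixed ordering of $K$ to keep it measurable, although the statement does not require this.
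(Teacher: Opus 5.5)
Your proposal is correct and follows the same route as the paper. You identify $F$ as the density of $\pi_*\rho$ by unfolding over a fundamental domain, then transfer $\int_{\im\mathcal{D}} F\,dx$ to $V/\Lambda^\perp$ using that $\phi|_{\im\mathcal{D}}$ is a measure-preserving bijection with inverse $\mathcal{D}$, and finally integrate the pointwise inequality $F(\mathcal{D}(y)) \le F(\mathcal{M}(y))$; your sheet-by-sheet decomposition via $\kappa : Q \to K$ just spells out the measurability and change-of-variables step that the paper states in one line.
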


\begin{proof}
Let $dv$ be the volume form on $V$ induced by the hermitian metric, and let $dx$ be the induced volume form on $V/\Lambda$. Since $f$ is a probability density for $\rho$, the pushforward measure $\pi_*\rho$ has density $F$ with respect to $dx$. Indeed, for every bounded measurable function $g : V/\Lambda \to \R$, choosing a measurable fundamental domain $Q \subset V$ for $\Lambda$, we have
\[
\int_{V/\Lambda} g d(\pi_*\rho)
=
\int_V g(\pi(v))f(v)dv
=
\sum_{\lambda \in \Lambda}\int_Q g(\pi(v+\lambda))f(v+\lambda)dv
=
\int_{V/\Lambda} g(x)F(x)dx.
\]
Thus $d(\pi_*\rho)=Fdx$.

For a decoder $\mathcal{D}$, the probability of no logical error is
\[
\pi_*\rho(\im \mathcal{D})
=
\int_{\im \mathcal{D}} F(x)dx.
\]
Since $\mathcal{D}$ is a section of $\phi : V/\Lambda \to V/\Lambda^\perp$, the restriction
\[
\phi : \im \mathcal{D} \too V/\Lambda^\perp
\]
is a measurable bijection with inverse $\mathcal{D}$, and the quotient volume forms are compatible with this covering map. Hence
\[
\int_{\im \mathcal{D}} F(x)dx
=
\int_{V/\Lambda^\perp} F(\mathcal{D}(y))dy.
\]
The last statement follows immediately.
\end{proof}

Let $\mathcal{M}$ be a maximum likelihood decoder obtained by maximizing $F$ on the fibres of $\phi$, as in Proposition \ref{rq2fhiu0}. The set of errors corrected successfully by $\mathcal{M}$ is
\[
G
\coloneqq
\{v \in V : F(\pi(v)) = \max_{x \in \phi^{-1}(\phi(\pi(v)))} F(x)\}.
\]
Thus
\[
\mathscr{R}_\rho(\Lambda) = \rho(G)
\quad\text{and}\quad
\mathscr{F}_\rho(\Lambda) = \rho(V\setminus G).
\]
Equivalently, since the points in the fibre of $\phi$ through $\pi(v)$ are represented by $\pi(v+\mu)$ with $\mu \in \Lambda^\perp$, we may write
\[
G
=
\bigcap_{[\mu] \in \Lambda^\perp/\Lambda}G_\mu,
\]
where
\[
G_\mu
\coloneqq
\{v \in V : F(v) \ge F(v+\mu)\}.
\]

\subsection{Small variance limit}

We now specialize to Gaussian displacement errors. Let $\rho_\sigma$ be the probability measure on $V$ induced by the centered isotropic Gaussian density of variance $\sigma^2$, namely $d\rho_\sigma = f_\sigma dv$, where
\[
f_\sigma : V \too \R, \quad f_\sigma(v) = \frac{1}{(2\pi \sigma^2)^n} \exp(-\frac{|v|^2}{2\sigma^2}),
\]
and $n = \tfrac{1}{2}\dim_\R V$.
We write
\[
\mathscr{R}_\sigma(\Lambda)
\coloneqq
\mathscr{R}_{\rho_\sigma}(\Lambda)
\quad\text{and}\quad
\mathscr{F}_\sigma(\Lambda)
\coloneqq
\mathscr{F}_{\rho_\sigma}(\Lambda).
\]
The goal of this subsection is to compute the asymptotic behaviour of $\mathscr{F}_\sigma(\Lambda)$ as $\sigma \to 0$.

Define
\[
\ell(\Lambda)
\coloneqq
\min\{|\mu| : \mu \in \Lambda^\perp\setminus\Lambda\}
\]
and
\[
N(\Lambda)
\coloneqq
|\{\mu \in \Lambda^\perp\setminus\Lambda : |\mu| = \ell(\Lambda)\}|.
\]
The number $\ell(\Lambda)$ is the length of the shortest displacement that can produce a non-trivial logical Pauli gate. The goal of this subsection is to prove the following result.

\begin{theorem}\label{clp62a30}
For every GKP code $(\Lambda,\nu)$ in $V$, we have
\[
\mathscr{F}_\sigma(\Lambda)
\sim
\frac{2N(\Lambda)\sigma}{\ell(\Lambda)\sqrt{2\pi}}
\exp(-\frac{\ell(\Lambda)^2}{8\sigma^2})
\quad
\text{as } \sigma \to 0.
\]
\end{theorem}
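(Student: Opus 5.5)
We sandwich $\mathscr{F}_\sigma(\Lambda)=\rho_\sigma(V\setminus G)$ between two quantities with the same leading asymptotics. Recall $G=\bigcap_{[\mu]\neq 0}G_\mu$ with $G_\mu=\{v : F_\sigma(v)\ge F_\sigma(v+\mu)\}$, where $F_\sigma$ is the $\Lambda$-periodization of $f_\sigma$. Let $S=\{\mu\in\Lambda^\perp\setminus\Lambda : |\mu|=\ell\}$, so $|S|=N$, and write $\ell=\ell(\Lambda)$, $N=N(\Lambda)$.

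\emph{Heuristic and localization.} Since $f_\sigma$ concentrates in a ball of radius $O(\sigma)$, for $|v|\le \ell/2-\delta$ the sum $F_\sigma(v)$ is dominated by the $\lambda=0$ term. Moreover $F_\sigma(v)<F_\sigma(v+\mu)$ roughly means that $v$ is closer to $-\mu-\lambda$ than to $0$ for some $\lambda\in\Lambda$. Every nonzero element of $\Lambda^\perp/\Lambda$ has representatives of length $\ge\ell$, so the failure region essentially lies outside the "Voronoi cell" $\{v : |v|\le|v+\mu'|\ \forall \mu'\in\Lambda^\perp\setminus\Lambda\}$. I will therefore compare $V\setminus G$ with $B\coloneqq\bigcup_{\mu\in S}H_\mu$, where $H_\mu=\{v : \langle v,\mu\rangle\ge \ell^2/2\}$ and $\langle\cdot,\cdot\rangle=\operatorname{Re}H$ is the half-space beyond the bisector. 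This region has Gaussian mass
\[
\rho_\sigma(H_\mu)=\Phi^c\!\left(\tfrac{\ell}{2\sigma}\right)\sim \frac{2\sigma}{\ell\sqrt{2\pi}}e^{-\ell^2/(8\sigma^2)}.
\]
Pairwise intersections $H_\mu\cap H_{\mu'}$ with $\mu\ne\mu'$ lie at distance strictly greater than $\ell/2$ from the origin (a two-dimensional Gaussian tail with exponent $>\ell^2/8$, since $\mu,\mu'$ are not parallel in the same direction). Hence by inclusion--exclusion $\rho_\sigma(B)\sim N\,\Phi^c(\ell/(2\sigma))$, which is exactly the claimed rate.

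\emph{Upper bound.} I will show $V\setminus G\subset B_\epsilon\cup\{|v|\ge R\}$, where $B_\epsilon$ is a slight enlargement of $B$ (half-spaces $\langle v,\mu\rangle\ge \ell^2/2-\epsilon\sigma^2$, or thresholds shifted by $o(\sigma^2/\ell)$), and $R>\ell/2$ is fixed with $\rho_\sigma(|v|\ge R)=o(\sigma e^{-\ell^2/8\sigma^2})$. For $|v|<R$ and $\sigma$ small, write $F_\sigma(v)\ge f_\sigma(v)$ and $F_\sigma(v+\mu)=\sum_\lambda f_\sigma(v+\mu+\lambda)$. Failure requires $f_\sigma(v)\le F_\sigma(v+\mu)$ for some class $[\mu]$, i.e.\ $e^{-|v|^2/2\sigma^2}\le\sum_\lambda e^{-|v+\mu+\lambda|^2/2\sigma^2}$. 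On $|v|<R$ the sum is dominated by its minimal terms up to a factor $C$ (the number of lattice points near the minimum). Failure thus forces $|v|^2\ge \min_{\mu'}|v+\mu'|^2-2\sigma^2\log C$ for some $\mu'\in\Lambda^\perp\setminus\Lambda$. Minimizers with $|\mu'|>\ell$ only matter when $|v|\ge \ell'/2-O(\sigma^2)>\ell/2+c$, which yields an exponentially smaller contribution; so only $\mu'\in S$ survive. This gives $\rho_\sigma(V\setminus G)\le \rho_\sigma(B_\epsilon)+o(\cdot)$, and since the threshold shift is $O(\sigma^2)$, $\rho_\sigma(B_\epsilon)\sim\rho_\sigma(B)$ (the shift changes $\Phi^c(\ell/2\sigma)$ by a factor $e^{O(1)\cdot\sigma^2/\sigma^2}$ — this must be checked carefully; a shift of $c\sigma^2$ in $\langle v,\mu\rangle$ moves the threshold in $t=\langle v,\hat\mu\rangle$ by $c\sigma^2/\ell$, changing the tail by the factor $e^{-c/2}\cdot(1+o(1))$).

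\emph{Main obstacle.} The step above is the main obstacle. A constant-factor slack from $\log C$ would spoil the sharp constant $N$, so I need the shift to be $o(\sigma^2)$. The fix is to use directly the fact that near the bisector (where $|v|\approx\ell/2$ and $|v|<R$) the only terms of size comparable to $f_\sigma(v)$ are $f_\sigma(v+\mu)$ with $\mu\in S$ (and $f_\sigma(v)$ itself). All other terms are $e^{-c/\sigma^2}$ relatively smaller, which gives $F_\sigma(w)=f_\sigma(w)(1+e^{-c/\sigma^2})$ locally. Then failure against class $[\mu]$ with $\mu\in S$ is exactly $|v|^2\ge|v+\mu|^2$ up to a multiplicative $1+e^{-c/\sigma^2}$ error, i.e.\ a threshold shift of $\sigma^2 e^{-c/\sigma^2}$, which is negligible. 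This yields the matching lower bound too: $B\cap\{|v|<R\}$ minus exponentially small corrections lies in $V\setminus G$. Combining both bounds with the inclusion--exclusion computation proves the theorem.
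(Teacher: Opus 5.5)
Your overall strategy works, and your lower bound is essentially the paper's. Both arguments localize near $\pm\mu/2$ for $\mu\in S$ and show that there $F_\sigma=f_\sigma\bigl(1+O(e^{-c/\sigma^2})\bigr)$. The maximum-likelihood boundary is then the bisector of $0$ and $\mu$ up to a shift $o(\sigma^2)$, and a one-dimensional Gaussian tail finishes. The paper sums over pairwise disjoint balls $B_\mu$ around $\mu/2$ rather than doing inclusion--exclusion on the half-spaces. Your inclusion--exclusion also works: $H_\mu\cap H_{-\mu}=\emptyset$, and otherwise $H_\mu\cap H_{\mu'}$ stays at distance $>\ell/2$ from the origin.

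Note that the claim ``$B\cap\{|v|<R\}$ minus negligible pieces lies in $V\setminus G$'' needs either $R$ close to $\ell/2$ or intersection with small balls around $\pm\mu/2$. Farther parts of $H_\mu$ need not be in the failure set; they only carry negligible mass.

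The upper bound is where you take a different route. You analyze the MLD failure set directly, first with a uniform constant $C$ and then refining near the bisectors to remove the $e^{O(1)}$ slack. The paper avoids this entirely. Since the MLD is optimal, $\mathscr{F}_\sigma(\Lambda)$ is at most the failure probability of the decoder defined by the Voronoi cell $\FDA$ of $\Lambda^\perp$. That decoder's failure region lies in $\bigcup_{\mu\in\Lambda^\perp\setminus\Lambda}H_\mu$, so a union bound gives an inequality valid for every $\sigma>0$ (Proposition~\ref{0i0s2i2p}) with no estimate on $F_\sigma$ at all. Your route costs more work, but it gives more information: it shows that the MLD error region itself is, to leading order, the union of the $H_\mu$ with $\mu\in S$.

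The step you still have to supply is the localization claim that near $\pm\mu/2$ the only non-negligible terms are $f_\sigma(v)$ and $f_\sigma(v\mp\mu)$. Your heuristic for it is false in general: you argue that for $|v|\le\ell/2-\delta$ the sum $F_\sigma(v)$ is dominated by $\lambda=0$. But $\Lambda$ can contain vectors much shorter than $\ell$, for example type $(1,2)$ with a very short vector in the first symplectic block. Then the ball of radius $\ell/2$ is not contained in the Voronoi cell $\FDB$ of $\Lambda$, and $\Lambda$-translates compete there.

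What is true, and what the paper proves, is $S\subset\FDB$. If $|\mu-\eta|<|\mu|$ for some $\eta\in\Lambda$, then $\mu-\eta\in\Lambda^\perp\setminus\Lambda$ is shorter than $\ell$, a contradiction. By convexity $\pm\mu/2\in\operatorname{Int}(\FDB)$, and Lemma~\ref{m4fgiqyn} then gives the estimate on small balls around $\pm\mu/2$. You also need two facts about the other competitors:
\begin{itemize}
\item The other elements of $S$ are strictly farther from $\mu/2$, by strict Cauchy--Schwarz.
\item Longer $\mu'$ satisfy $|\mu/2+\mu'|\ge\ell_2-\ell/2>\ell/2$, where $\ell_2>\ell$ is the next length.
\end{itemize}
With these in place, both halves of your argument go through.
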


Thus, in the small-noise limit, the optimal probability of a logical error is governed to first order by the shortest vectors in $\Lambda^\perp\setminus\Lambda$.

An immediate consequence is that, for sufficiently small noise, codes with larger $\ell(\Lambda)$ have smaller logical error probability.

\begin{corollary}
Let $(\Lambda_1,\nu_1)$ and $(\Lambda_2,\nu_2)$ be two GKP codes in $V$. If $\ell(\Lambda_1) > \ell(\Lambda_2)$, or if $\ell(\Lambda_1) = \ell(\Lambda_2)$ and $N(\Lambda_1) < N(\Lambda_2)$, then there exists $\sigma_0 > 0$ such that
\[
\mathscr{R}_\sigma(\Lambda_1) > \mathscr{R}_\sigma(\Lambda_2)
\]
for all $0 < \sigma < \sigma_0$.
\end{corollary}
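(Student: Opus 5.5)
The plan is to deduce the corollary directly from the asymptotic formula of Theorem~\ref{clp62a30} by comparing fragilities. Since $\mathscr{R}_\sigma = 1 - \mathscr{F}_\sigma$, it suffices to show that $\mathscr{F}_\sigma(\Lambda_1) < \mathscr{F}_\sigma(\Lambda_2)$ for all sufficiently small $\sigma > 0$. I will assume, as is implicit in the statement, that both codes are nontrivial, i.e.\ $\Lambda_i^\perp \neq \Lambda_i$. Then $\ell(\Lambda_i)$ and $N(\Lambda_i)$ are well defined, and $N(\Lambda_i) \ge 2$ since $\mu \mapsto -\mu$ preserves $\Lambda_i^\perp \setminus \Lambda_i$. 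Consequently both fragilities are strictly positive for small $\sigma$: each is asymptotic to a positive function.

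Write $A_i(\sigma) \coloneqq \frac{2N(\Lambda_i)\sigma}{\ell(\Lambda_i)\sqrt{2\pi}}\exp(-\ell(\Lambda_i)^2/(8\sigma^2))$. By Theorem~\ref{clp62a30}, $\mathscr{F}_\sigma(\Lambda_i)/A_i(\sigma) \to 1$, so
\[
\lim_{\sigma\to 0}\frac{\mathscr{F}_\sigma(\Lambda_1)}{\mathscr{F}_\sigma(\Lambda_2)} = \lim_{\sigma \to 0}\frac{A_1(\sigma)}{A_2(\sigma)},
\]
provided the right-hand limit exists. In the first case, $\ell(\Lambda_1) > \ell(\Lambda_2)$, and the ratio equals
\[
\frac{N(\Lambda_1)\ell(\Lambda_2)}{N(\Lambda_2)\ell(\Lambda_1)}\exp\bigl(-(\ell(\Lambda_1)^2-\ell(\Lambda_2)^2)/(8\sigma^2)\bigr),
\]
which tends to $0$. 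In the second case, $\ell(\Lambda_1) = \ell(\Lambda_2)$, and the ratio is the constant $N(\Lambda_1)/N(\Lambda_2) < 1$.

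In either case the limit of $\mathscr{F}_\sigma(\Lambda_1)/\mathscr{F}_\sigma(\Lambda_2)$ is some $c < 1$. Hence there is a $\sigma_0 > 0$ such that this ratio is less than $(1+c)/2 < 1$ for all $0<\sigma<\sigma_0$. Since the denominator is positive, this gives $\mathscr{F}_\sigma(\Lambda_1) < \mathscr{F}_\sigma(\Lambda_2)$ on that range, and therefore $\mathscr{R}_\sigma(\Lambda_1) > \mathscr{R}_\sigma(\Lambda_2)$.

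There is no real obstacle here; all analytic content sits in Theorem~\ref{clp62a30}. The only point requiring care is the positivity of $\mathscr{F}_\sigma(\Lambda_2)$ for small $\sigma$, so that the ratio makes sense. This follows from the asymptotic equivalence with the positive function $A_2$, which in turn uses $N(\Lambda_2) \geq 1$ and the nontriviality of $\Lambda_2^\perp/\Lambda_2$.
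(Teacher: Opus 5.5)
Your proof is correct and follows the paper's approach: the paper states this corollary as an immediate consequence of Theorem~\ref{clp62a30} and gives no separate proof, and your argument makes that deduction explicit by taking the limit of $\mathscr{F}_\sigma(\Lambda_1)/\mathscr{F}_\sigma(\Lambda_2)$ through the asymptotic equivalents. You also correctly handle the implicit assumption $\Lambda_i^\perp \neq \Lambda_i$ and the positivity of $\mathscr{F}_\sigma(\Lambda_2)$ for small $\sigma$, which the paper leaves unstated.
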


\begin{remark}
If $\Lambda$ has type $(d,d,\ldots,d)$ for some $d > 1$, then
\[
\ell(\Lambda) = \frac{1}{d}\lambda_1(\Lambda),
\]
where
\[
\lambda_1(\Lambda) \coloneqq \min\{|\lambda| : \lambda \in \Lambda\setminus\{0\}\}.
\]
Equivalently, $\lambda_1(\Lambda)$ is the systole of the torus $V/\Lambda$, i.e.\ the length of the shortest closed geodesic. It is also twice the packing radius of the lattice $\Lambda$. Thus, in this case, maximizing $\ell(\Lambda)$ is equivalent to maximizing the sphere-packing radius of $\Lambda$.
\end{remark}

The proof of Theorem \ref{clp62a30} occupies the rest of this subsection. The main point is to compare the maximum likelihood decoding region with the Voronoi cell of $\Lambda^\perp$ and to show that, in the small-variance limit, only the facets corresponding to shortest vectors in $\Lambda^\perp\setminus\Lambda$ contribute to first order.

We begin with a lattice-sum bound that will be used repeatedly.

\begin{lemma}\label{6vgpl6zb}
Let $\Lambda \subset \R^m$ be a lattice and let $S \subset \Lambda$ be any subset. Let
\[
\ell \coloneqq \min\{|\lambda| : \lambda \in S\}.
\]
Then for every $a_0 > 0$ there exists $c > 0$ such that
\[
\sum_{\lambda \in S} e^{-a|\lambda|^2} \le ce^{-a\ell^2}
\]
for all $a > a_0$.
\end{lemma}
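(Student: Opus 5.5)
The idea is to factor out $e^{-a\ell^2}$ and show that the remaining sum is bounded uniformly for $a > a_0$. If $S$ is empty there is nothing to prove, so assume $S \neq \emptyset$; then $\ell$ is attained, since $S$ is a nonempty subset of a discrete set and so only finitely many of its elements have norm at most any given bound. For every $\lambda \in S$ we have $|\lambda|^2 - \ell^2 \ge 0$. Hence, for $a > a_0$,
\[
e^{-a|\lambda|^2} = e^{-a\ell^2} e^{-a(|\lambda|^2-\ell^2)} \le e^{-a\ell^2} e^{-a_0(|\lambda|^2-\ell^2)}.
\]
Summing over $S$ gives the claimed bound with
\[
c \coloneqq e^{a_0\ell^2}\sum_{\lambda \in S} e^{-a_0|\lambda|^2},
\]
provided this quantity is finite.

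So the only real step is to show that $\sum_{\lambda\in\Lambda} e^{-a_0|\lambda|^2} < \infty$. Since the terms are nonnegative, this bounds the sum over $S \subset \Lambda$ as well.

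To prove convergence, I would use a counting bound for the lattice. Let $r>0$ be such that the open balls of radius $r$ around the lattice points are pairwise disjoint; for instance, take $r$ equal to half the length of the shortest nonzero vector of $\Lambda$. Each ball around a point $\lambda$ with $|\lambda| \le R$ lies inside the ball of radius $R+r$ centred at $0$. Comparing volumes gives
\[
\#\{\lambda\in\Lambda : |\lambda|\le R\} \le \left(\frac{R+r}{r}\right)^m.
\]
Grouping the lattice points into shells $k \le |\lambda| < k+1$ then gives
\[
\sum_{\lambda\in\Lambda} e^{-a_0|\lambda|^2} \le \sum_{k\ge0} \left(\frac{k+1+r}{r}\right)^m e^{-a_0k^2},
\]
and the right-hand side converges because the exponential decay dominates the polynomial growth. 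This growth estimate is the only mildly nontrivial ingredient; everything else is the elementary monotonicity in $a$ used above.
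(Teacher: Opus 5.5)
Your proof is correct and follows the paper's argument: factor out $e^{-a\ell^2}$, use $|\lambda|^2-\ell^2\ge 0$ to replace $a$ by $a_0$, and take $c = e^{a_0\ell^2}\sum_{\lambda\in S}e^{-a_0|\lambda|^2}$. The only addition is your volume-counting proof that $\sum_{\lambda\in\Lambda}e^{-a_0|\lambda|^2}<\infty$, a step the paper simply asserts with ``because $S\subset\Lambda$''.
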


\begin{proof}
We write
\[
\sum_{\lambda \in S} e^{-a|\lambda|^2}
=
e^{-a\ell^2}
\sum_{\lambda \in S} e^{-a(|\lambda|^2-\ell^2)}.
\]
Since $|\lambda|^2-\ell^2 \ge 0$ for all $\lambda \in S$, the second factor is bounded above by
\[
\sum_{\lambda \in S} e^{-a_0(|\lambda|^2-\ell^2)}
=
e^{a_0\ell^2}\sum_{\lambda \in S} e^{-a_0|\lambda|^2}.
\]
The last sum is finite because $S \subset \Lambda$, so the result follows.
\end{proof}

\begin{lemma}\label{y3bic8v7}
Let $\mu \in V\setminus\{0\}$ and let $r:(0,\infty) \to \R$ be bounded below by a positive constant. Then
\[
\rho_\sigma(\{v \in V : v\cdot\mu \ge r(\sigma)\})
\le
\frac{\sigma|\mu|}{r(\sigma)\sqrt{2\pi}}
\exp(-\frac{r(\sigma)^2}{2\sigma^2|\mu|^2})
\]
for all $\sigma > 0$. Moreover,
\[
\rho_\sigma(\{v \in V : v\cdot\mu \ge r(\sigma)\})
\sim
\frac{\sigma|\mu|}{r(\sigma)\sqrt{2\pi}}
\exp(-\frac{r(\sigma)^2}{2\sigma^2|\mu|^2})
\]
as $\sigma \to 0$.
\end{lemma}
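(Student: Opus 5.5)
The plan is to reduce to a one-dimensional Gaussian tail and then apply the standard Mills-ratio bound together with its asymptotic. Here $v\cdot\mu$ denotes the real euclidean inner product $\operatorname{Re} H(v,\mu)$, so $|v|^2 = v\cdot v$. The density $f_\sigma$ is a product of $2n$ independent centered real Gaussians of variance $\sigma^2$ in any orthonormal real basis. We choose an orthonormal basis whose first vector is $e_1 = \mu/|\mu|$. Then the random variable $v \mapsto v\cdot\mu = |\mu|\,(v\cdot e_1)$ is a centered real Gaussian with variance $\sigma^2|\mu|^2$. After integrating out the remaining $2n-1$ coordinates, which contributes a factor $1$, we get
\[
\rho_\sigma(\{v : v\cdot\mu \ge r\}) = \frac{1}{\sqrt{2\pi}}\int_{t}^{\infty} e^{-s^2/2}\,ds, \qquad t \coloneqq \frac{r}{\sigma|\mu|}.
\]

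Next comes the upper bound. Since $r(\sigma)>0$, we have $t>0$. On the range $s\ge t$ we have $s/t\ge 1$, so
\[
\int_t^\infty e^{-s^2/2}\,ds \le \int_t^\infty \frac{s}{t}\,e^{-s^2/2}\,ds = \frac{e^{-t^2/2}}{t}.
\]
Substituting $t = r/(\sigma|\mu|)$ gives exactly $\frac{\sigma|\mu|}{r\sqrt{2\pi}}\exp\bigl(-\frac{r^2}{2\sigma^2|\mu|^2}\bigr)$, and this holds for every $\sigma>0$.

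For the asymptotic equivalence, we use the matching lower bound. Integrating by parts, or differentiating $e^{-s^2/2}(1/s - 1/s^3)$, gives
\[
\int_t^\infty e^{-s^2/2}\,ds \ge \Bigl(\frac1t - \frac1{t^3}\Bigr)e^{-t^2/2}.
\]
So the ratio of the tail to $e^{-t^2/2}/t$ lies in $[1-t^{-2},1]$, and it tends to $1$ as $t\to\infty$. The only point needing care is that $t\to\infty$ as $\sigma\to0$. This holds because $r(\sigma)\ge r_0>0$ by hypothesis, so $t \ge r_0/(\sigma|\mu|)\to\infty$. This is precisely why the lower bound on $r$ is assumed. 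The argument is otherwise routine. The main thing to check is that the hypothesis on $r$ delivers $t\to\infty$ uniformly, which it does, with no dependence on how $r$ varies with $\sigma$.
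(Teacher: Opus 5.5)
Your proposal is correct and follows the paper's proof: you standardize $v\cdot\mu/(\sigma|\mu|)$ to a one-dimensional unit Gaussian and apply the Gaussian tail (Mills ratio) bound and asymptotic. The lower bound on $r$ is used, as in the paper, to get $t>0$ and $t\to\infty$. The only difference is that you derive the bounds $\bigl(\frac1t-\frac1{t^3}\bigr)e^{-t^2/2}\le\int_t^\infty e^{-s^2/2}\,ds\le \frac1t e^{-t^2/2}$ explicitly, where the paper simply cites them as standard.
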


\begin{proof}
Under $\rho_\sigma$, the random variable
\[
Z = \frac{v\cdot\mu}{\sigma|\mu|}
\]
is a centered one-dimensional Gaussian of variance $1$. Set
\[
u(\sigma) = \frac{r(\sigma)}{\sigma|\mu|}.
\]
Since $r(\sigma)$ is bounded below by a positive constant, we have $u(\sigma)\to\infty$ as $\sigma\to 0$. Moreover,
\[
\rho_\sigma(\{v \in V : v\cdot\mu \ge r(\sigma)\})
=
\frac{1}{\sqrt{2\pi}}\int_{u(\sigma)}^\infty e^{-t^2/2}dt.
\]
The result follows from the standard Gaussian tail estimate
\[
\int_u^\infty e^{-t^2/2}dt \le \frac{1}{u}e^{-u^2/2}
\]
and the asymptotic
\[
\int_u^\infty e^{-t^2/2}dt \sim \frac{1}{u}e^{-u^2/2}
\]
as $u\to\infty$.
\end{proof}

For the rest of this subsection, let $\FDA \subset V$ denote the Voronoi cell of $\Lambda^\perp$ at the origin, i.e.
\[
\FDA
=
\{v \in V : |v| \le |v-\mu| \text{ for all } \mu \in \Lambda^\perp\}.
\]
Thus $\FDA$ is a fundamental domain for $\Lambda^\perp$, up to a set of measure zero. For $\mu \in V$, let
\[
H_\mu
\coloneqq
\{v \in V : v\cdot\mu \ge \tfrac{1}{2}|\mu|^2\} = \{v \in V : |v| \ge |v - \mu| \}.
\]
This is the half-space bounded by the perpendicular bisector of $0$ and $\mu$, on the side closer to $\mu$.

Let
\[
F_\sigma(v)
=
\sum_{\lambda \in \Lambda} f_\sigma(v+\lambda)
\]
be the periodized Gaussian density associated with $\Lambda$.

\begin{proposition}\label{0i0s2i2p}
We have
\begin{equation}\label{qcu8bldg}
\mathscr{F}_\sigma(\Lambda)
\le
\frac{2\sigma}{\sqrt{2\pi}}
\sum_{\mu \in \Lambda^\perp\setminus\Lambda}
\frac{1}{|\mu|}
\exp(-\frac{|\mu|^2}{8\sigma^2})
\end{equation}
for all $\sigma > 0$. In particular,
\[
\mathscr{F}_\sigma(\Lambda)
\le
\frac{2N(\Lambda)\sigma}{\ell(\Lambda)\sqrt{2\pi}}
\exp(-\frac{\ell(\Lambda)^2}{8\sigma^2})(1+o(1)).
\]
\end{proposition}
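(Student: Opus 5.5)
The fragility is the optimal error probability, so it is at most the failure probability of any particular decoder. I will bound it using the maximum energy decoder $\mathcal{D}_Q$ built from the Voronoi cell $\FDA$ of $\Lambda^\perp$.

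\emph{Failure set of $\mathcal{D}_Q$.} Write $v = q + \mu$ with $q \in \FDA$ and $\mu \in \Lambda^\perp$. The decoder succeeds exactly when $\mu \in \Lambda$, up to the measure-zero boundary of $\FDA$. So the failure set is contained in
\[
\bigcup_{\mu \in \Lambda^\perp\setminus\Lambda} (\FDA + \mu).
\]
For each such $\mu$, a point $v \in \FDA + \mu$ is at least as close to $\mu$ as to $0$, so $v \in H_\mu$. Hence the failure set lies in $\bigcup_{\mu \in \Lambda^\perp\setminus\Lambda} H_\mu$, and a union bound gives
\[
\mathscr{F}_\sigma(\Lambda) \le \sum_{\mu \in \Lambda^\perp\setminus\Lambda} \rho_\sigma(H_\mu).
\]

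\emph{Each half-space.} Since $H_\mu = \{v : v\cdot\mu \ge \tfrac12|\mu|^2\}$, I apply Lemma \ref{y3bic8v7} with the constant $r = \tfrac12|\mu|^2$. This gives
\[
\rho_\sigma(H_\mu) \le \frac{\sigma|\mu|}{\tfrac12|\mu|^2\sqrt{2\pi}}\exp\!\Big(-\frac{|\mu|^2}{8\sigma^2}\Big) = \frac{2\sigma}{|\mu|\sqrt{2\pi}}\exp\!\Big(-\frac{|\mu|^2}{8\sigma^2}\Big).
\]
Summing over $\mu$ yields \eqref{qcu8bldg}.

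I should check that the inner product $v\cdot\mu$ and the variance normalization in Lemma \ref{y3bic8v7} match the real part of $H$. The paper uses $|v|^2 = H(v,v)$, so $v\cdot\mu = \operatorname{Re}H(v,\mu)$. Under $\rho_\sigma$, the projection onto $\mu/|\mu|$ then has variance $\sigma^2$, as the lemma requires.

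\emph{The asymptotic form.} I split the sum in \eqref{qcu8bldg} into two parts.
\begin{itemize}
\item The $N(\Lambda)$ terms with $|\mu| = \ell(\Lambda)$ give exactly the leading expression.
\item The remaining $\mu$ form a subset $S$ of the lattice $\Lambda^\perp$ whose minimal norm $\ell'$ is strictly greater than $\ell(\Lambda)$, because the set of norms of a lattice is discrete. The factor $1/|\mu|$ is bounded by $1/\ell(\Lambda)$.
\end{itemize}
For the second part, Lemma \ref{6vgpl6zb} with $a = 1/(8\sigma^2)$ bounds the remainder by $c\,\sigma\, e^{-\ell'^2/(8\sigma^2)}$ once $\sigma$ is small, so that $a > a_0$. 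This is $o\big(\sigma e^{-\ell(\Lambda)^2/(8\sigma^2)}\big)$, which gives the factor $(1+o(1))$.

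The only delicate point is the measure-zero boundary of the Voronoi cell, where the decoder must choose among ties. It contributes nothing because $\rho_\sigma$ is absolutely continuous.
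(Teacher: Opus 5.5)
Your proposal is correct and follows the paper's proof essentially step for step. The paper lower-bounds the maximum likelihood integrand $\max_{[\eta]}F_\sigma(v+\eta)$ by $F_\sigma(v)$ on the Voronoi cell $\FDA$, which is exactly the success probability of the Voronoi-cell decoder you use; it then proceeds with the same inclusion $\FDA+\mu\subset H_\mu$, union bound, Lemma~\ref{y3bic8v7} with $r=|\mu|^2/2$, and Lemma~\ref{6vgpl6zb} for the remainder (your direct appeal to $\mathscr{F}_\sigma$ as an infimum of failure probabilities over decoders is marginally cleaner, since it does not need an attained maximum likelihood decoder).
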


\begin{proof}
By Proposition \ref{rq2fhiu0}, the maximum likelihood decoder has success probability
\[
\mathscr{R}_\sigma(\Lambda)
=
\int_{\FDA}
\max_{[\eta]\in\Lambda^\perp/\Lambda}
F_\sigma(v+\eta)dv.
\]
Here we use $\FDA$ as a fundamental domain for $V/\Lambda^\perp$. Therefore
\[
\mathscr{R}_\sigma(\Lambda)
\ge
\int_{\FDA}F_\sigma(v)dv.
\]
Using the definition of $F_\sigma$, we get
\[
\int_{\FDA}F_\sigma(v)dv
=
\sum_{\lambda \in \Lambda}
\int_{\FDA} f_\sigma(v+\lambda)dv
=
\rho_\sigma\left(\bigcup_{\lambda \in \Lambda}(\FDA+\lambda)\right).
\]
Hence
\[
\mathscr{F}_\sigma(\Lambda)
\le
\rho_\sigma\left(V\setminus \bigcup_{\lambda \in \Lambda}(\FDA+\lambda)\right).
\]
Since $V$ is tiled by the translates $\FDA+\mu$ with $\mu \in \Lambda^\perp$, up to a set of measure zero, we have
\[
V\setminus \bigcup_{\lambda \in \Lambda}(\FDA+\lambda)
\subset
\bigcup_{\mu \in \Lambda^\perp\setminus\Lambda}(\FDA+\mu).
\]
Moreover, for every $\mu \in \Lambda^\perp\setminus\Lambda$,
\[
\FDA+\mu \subset H_\mu,
\]
because each point of $\FDA+\mu$ is at least as close to $\mu$ as it is to $0$. It follows that
\[
\mathscr{F}_\sigma(\Lambda)
\le
\sum_{\mu \in \Lambda^\perp\setminus\Lambda}
\rho_\sigma(H_\mu).
\]
Lemma \ref{y3bic8v7}, applied with $r(\sigma)=|\mu|^2/2$, gives
\[
\rho_\sigma(H_\mu)
\le
\frac{2\sigma}{|\mu|\sqrt{2\pi}}
\exp(-\frac{|\mu|^2}{8\sigma^2}).
\]
This proves \eqref{qcu8bldg}.

It remains to extract the first-order term. Let
\[
\ell_2
=
\min\{|\mu| : \mu \in \Lambda^\perp\setminus\Lambda,\ |\mu|>\ell(\Lambda)\}.
\]
By Lemma \ref{6vgpl6zb}, we have
\[
\sum_{\substack{\mu \in \Lambda^\perp\setminus\Lambda\\|\mu|>\ell(\Lambda)}}
\frac{1}{|\mu|}
\exp(-\frac{|\mu|^2}{8\sigma^2})
=
O\left(\exp(-\frac{\ell_2^2}{8\sigma^2})\right).
\]
Since $\ell_2>\ell(\Lambda)$, this is
\[
o\left(\exp(-\frac{\ell(\Lambda)^2}{8\sigma^2})\right).
\]
The terms with $|\mu|=\ell(\Lambda)$ contribute
\[
\frac{N(\Lambda)}{\ell(\Lambda)}
\exp(-\frac{\ell(\Lambda)^2}{8\sigma^2}),
\]
and the stated asymptotic upper bound follows.
\end{proof}

Let $\FDB \subset V$ be the Voronoi cell of $\Lambda$ at the origin, and let $\operatorname{Int}(\FDB)$ denote its interior. We will use the following estimate to compare the periodized Gaussian density $F_\sigma$ with the original Gaussian density $f_\sigma$ on compact subsets of $\operatorname{Int}(\FDB)$.

\begin{lemma}\label{m4fgiqyn}
Define $\delta_\sigma(v)$ by
\[
F_\sigma(v) = f_\sigma(v)(1+\delta_\sigma(v)).
\]
For every compact set $K \subset \operatorname{Int}(\FDB)$, there exist constants $\sigma_0, c, a>0$ such that, for all $0<\sigma<\sigma_0$ and all $v \in K$, we have
\[
|\delta_\sigma(v)|\le ce^{-a/\sigma^2}.
\]
\end{lemma}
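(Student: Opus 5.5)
We write $\delta_\sigma$ explicitly and bound it termwise. Since $f_\sigma(v)>0$, we have
\[
\delta_\sigma(v)=\sum_{\lambda\in\Lambda\setminus\{0\}}\frac{f_\sigma(v+\lambda)}{f_\sigma(v)}
=\sum_{\lambda\in\Lambda\setminus\{0\}}\exp\Big(-\frac{|v+\lambda|^2-|v|^2}{2\sigma^2}\Big).
\]
In particular $\delta_\sigma\ge 0$, so it suffices to bound this sum from above uniformly on $K$. The key geometric input is that on a compact subset of the interior of the Voronoi cell, every nonzero lattice translate is strictly farther away than the origin, with a uniform gap.

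Concretely, for $v\in \operatorname{Int}(\FDB)$ and $\lambda\in\Lambda\setminus\{0\}$ we have $|v|<|v+\lambda|$, because $-\lambda\in\Lambda$ and $v$ lies strictly inside the cell, hence strictly closer to $0$ than to $-\lambda$. We then establish a uniform quantitative version: there is $\eta>0$ such that
\[
|v+\lambda|^2-|v|^2\ge \eta|\lambda|^2 \quad\text{for all } v\in K,\ \lambda\in\Lambda\setminus\{0\}.
\]
Write $|v+\lambda|^2-|v|^2=|\lambda|^2+2v\cdot\lambda$. Let $R=\max_K|v|$. For $|\lambda|\ge 4R$ we get $|\lambda|^2+2v\cdot\lambda\ge |\lambda|^2-2R|\lambda|\ge \tfrac12|\lambda|^2$. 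For the finitely many nonzero $\lambda$ with $|\lambda|<4R$, the continuous function $v\mapsto |v+\lambda|^2-|v|^2$ is strictly positive on the compact set $K$. It therefore has a positive minimum, and dividing by $|\lambda|^2<16R^2$ gives a positive constant. Taking $\eta$ to be the minimum of $\tfrac12$ and these finitely many constants gives the claim.

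With this bound, $0\le\delta_\sigma(v)\le\sum_{\lambda\ne0}e^{-\eta|\lambda|^2/(2\sigma^2)}$. We apply Lemma \ref{6vgpl6zb} with $S=\Lambda\setminus\{0\}$ and $a=\eta/(2\sigma^2)$. Fix $\sigma_0=1$ and $a_0=\eta/4$, so that $a>a_0$ whenever $\sigma<1$. The lemma then gives
\[
\delta_\sigma(v)\le c\,e^{-\eta\lambda_1(\Lambda)^2/(2\sigma^2)},
\]
which is the desired estimate with $a=\eta\lambda_1(\Lambda)^2/2$.

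The only step with real content is the uniform gap $\eta$. The potential trap there is handling the infinitely many $\lambda$ at once, and the split between large and small $|\lambda|$ settles it.
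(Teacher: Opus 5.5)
Your proof is correct and follows the paper's argument essentially verbatim. You rewrite $\delta_\sigma$ as a sum over $\Lambda\setminus\{0\}$, get a uniform bound $|v+\lambda|^2-|v|^2\ge \eta|\lambda|^2$ on $K$, and conclude with Lemma~\ref{6vgpl6zb}; the one difference is that you justify the uniform gap explicitly, by splitting into $|\lambda|\ge 4R$ and the finitely many shorter $\lambda$, whereas the paper attributes it to compactness of $K$ without further detail.
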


\begin{proof}
By definition,
\[
\delta_\sigma(v)
=
\sum_{\lambda\in\Lambda\setminus\{0\}}
\exp(-\frac{|v+\lambda|^2-|v|^2}{2\sigma^2}).
\]
Since $K \subset \operatorname{Int}(\FDB)$, we have $|v+\lambda|^2-|v|^2>0$ for all $v\in K$ and all $\lambda\in\Lambda\setminus\{0\}$. Moreover,
\[
|v+\lambda|^2-|v|^2=|\lambda|^2+2v\cdot\lambda.
\]
Since $K$ is compact, there exists $a>0$ such that
\[
|v+\lambda|^2-|v|^2 \ge a|\lambda|^2
\]
for all $v\in K$ and all $\lambda\in\Lambda\setminus\{0\}$. It follows that
\[
|\delta_\sigma(v)|
\le
\sum_{\lambda\in\Lambda\setminus\{0\}}
\exp(-\frac{a|\lambda|^2}{2\sigma^2}),
\]
and the result follows from Lemma \ref{6vgpl6zb}.
\end{proof}

\begin{proof}[Proof of Theorem \ref{clp62a30}]
Set $\ell \coloneqq \ell(\Lambda)$ and
\[
S \coloneqq \{\eta \in \Lambda^\perp \setminus \Lambda : |\eta| = \ell\},
\]
so that $N(\Lambda) = |S|$. Proposition \ref{0i0s2i2p} gives the upper bound. It remains to prove that
\begin{equation}\label{ahph15dd}
\mathscr{F}_\sigma(\Lambda) \ge \frac{2|S|\sigma}{\ell\sqrt{2\pi}}\exp(-\frac{\ell^2}{8\sigma^2})(1+o(1)).
\end{equation}

Let
\[
E_\mu \coloneqq \{v \in V : F_\sigma(v-\mu)>F_\sigma(v)\}.
\]
Then the error region is
\[
E \coloneqq V\setminus G
=
\bigcup_{[\mu]\in\Lambda^\perp/\Lambda}E_\mu,
\]
and hence
\begin{equation}\label{d5erlgm5}
\mathscr{F}_\sigma(\Lambda)
=
\rho_\sigma(E)
=
\rho_\sigma\left(\bigcup_{[\mu]\in\Lambda^\perp/\Lambda}E_\mu\right).
\end{equation}

We first claim that $S \subset \FDB$. Indeed, if $\mu \notin \FDB$, then there exists $\eta \in \Lambda$ such that $|\mu|>|\mu-\eta|$. But then $\mu-\eta \in \Lambda^\perp\setminus\Lambda$ and $|\mu-\eta|<\ell$, contradicting the definition of $\ell$.

It follows that, for each $\mu\in S$, we have
\[
\mu/2 \in \operatorname{Int}(\FDB)
\quad\text{and}\quad
-\mu/2 \in \operatorname{Int}(\FDB).
\]
Since $S$ is finite, we may choose pairwise disjoint closed balls $B_\mu$ centered at $\mu/2$ such that
\[
B_\mu\subset \operatorname{Int}(\FDB),
\qquad
B_\mu-\mu\subset \operatorname{Int}(\FDB)
\]
for all $\mu\in S$.

We will show that there exists a function $\e(\sigma)=o(\sigma^2)$ such that, for all $\mu\in S$,
\begin{equation}\label{o1vdj4c5}
A_\mu(\sigma)
\coloneqq
\{v\in B_\mu : v\cdot\mu \ge \tfrac{\ell^2}{2}+\e(\sigma)\}
\subset E_\mu
\end{equation}
and
\begin{equation}\label{fa6q8y14}
\rho_\sigma(A_\mu(\sigma))
\sim
\frac{2\sigma}{\ell\sqrt{2\pi}}
\exp(-\frac{\ell^2}{8\sigma^2})
\quad \text{as } \sigma \to 0.
\end{equation}
Since the balls $B_\mu$ are pairwise disjoint, \eqref{d5erlgm5}, \eqref{o1vdj4c5}, and \eqref{fa6q8y14} imply \eqref{ahph15dd}.

We first prove the inclusion \eqref{o1vdj4c5}. The set
\[
\bigcup_{\mu \in S}(B_\mu \cup (B_\mu-\mu))
\]
is a compact subset of $\operatorname{Int}(\FDB)$. By Lemma \ref{m4fgiqyn}, there exist constants $b,c>0$ such that, for all $\mu\in S$ and all $v\in B_\mu$,
\[
F_\sigma(v)
=
\frac{1}{(2\pi\sigma^2)^n}
\exp(-\frac{|v|^2}{2\sigma^2})
(1+\delta_{1,\sigma}(v)),
\]
and
\[
F_\sigma(v-\mu)
=
\frac{1}{(2\pi\sigma^2)^n}
\exp(-\frac{|v-\mu|^2}{2\sigma^2})
(1+\delta_{2,\sigma}(v)),
\]
with
\[
|\delta_{i,\sigma}(v)| \le ce^{-b/\sigma^2}.
\]
Thus, for $v\in B_\mu$, we have
\[
v \in E_\mu
\quad\Longleftrightarrow\quad
\exp(\frac{|v|^2-|v-\mu|^2}{2\sigma^2})
>
\frac{1+\delta_{1,\sigma}(v)}{1+\delta_{2,\sigma}(v)}.
\]
Since
\[
|v|^2-|v-\mu|^2 = 2v\cdot\mu-\ell^2,
\]
this is equivalent to
\[
v\cdot\mu
>
\frac{\ell^2}{2}
+
\sigma^2\log\frac{1+\delta_{1,\sigma}(v)}{1+\delta_{2,\sigma}(v)}.
\]
For $\sigma$ sufficiently small, we have
\[
\left|\log\frac{1+\delta_{1,\sigma}(v)}{1+\delta_{2,\sigma}(v)}\right|<4ce^{-b/\sigma^2}.
\]
It follows that \eqref{o1vdj4c5} holds with
\[
\e(\sigma) \coloneqq 4c\sigma^2e^{-b/\sigma^2}.
\]
In particular, $\e(\sigma)=o(\sigma^2)$.

It remains to prove \eqref{fa6q8y14}. Let
\[
H_\mu(\sigma)
\coloneqq
\{v \in V : v\cdot\mu \ge \tfrac{\ell^2}{2}+\e(\sigma)\}.
\]
Then $A_\mu(\sigma)=B_\mu\cap H_\mu(\sigma)$. The point of minimal norm in $H_\mu(\sigma)$ is
\[
p_\sigma
=
\left(\frac{\ell^2}{2}+\e(\sigma)\right)\frac{\mu}{\ell^2}.
\]
Since $p_\sigma\to \mu/2$ and $B_\mu$ is a neighbourhood of $\mu/2$, there exists $r_0>\ell/2$ such that, for all sufficiently small $\sigma$,
\[
H_\mu(\sigma)\setminus B_\mu
\subset
\{v\in V : |v|\ge r_0\}.
\]
By the standard Gaussian tail estimate,
\[
\rho_\sigma(H_\mu(\sigma)\setminus B_\mu)
\le
\rho_\sigma(\{v\in V : |v|\ge r_0\})
=
O\left(\frac{1}{\sigma^{2n-2}}\exp(-\frac{r_0^2}{2\sigma^2})\right).
\]
Since $r_0>\ell/2$, this implies
\begin{equation}\label{kzw3oe07}
\rho_\sigma(H_\mu(\sigma)\setminus B_\mu)
=
o\left(\sigma\exp(-\frac{\ell^2}{8\sigma^2})\right).
\end{equation}
On the other hand, Lemma \ref{y3bic8v7} gives
\begin{equation}\label{tonwplmz}
\rho_\sigma(H_\mu(\sigma))
\sim
\frac{\sigma\ell}{(\tfrac{\ell^2}{2}+\e(\sigma))\sqrt{2\pi}}
\exp(-\frac{(\tfrac{\ell}{2}+\frac{\e(\sigma)}{\ell})^2}{2\sigma^2})
\sim
\frac{2\sigma}{\ell\sqrt{2\pi}}
\exp(-\frac{\ell^2}{8\sigma^2})
\end{equation}
as $\sigma\to 0$, where the second equivalence uses $\e(\sigma)=o(\sigma^2)$.
Finally, \eqref{fa6q8y14} follows from \eqref{kzw3oe07} and \eqref{tonwplmz}. This proves the lower bound \eqref{ahph15dd}, and hence the theorem.
\end{proof}

This completes the analysis of optimal decoding for Gaussian displacement noise.

\subsection{Geometric reformulation}

We now reinterpret these notions in the language of complex abelian varieties, thereby recovering the description in Section \ref{fci02gnv}.

The first task is to reinterpret displacement errors $\psi \mto T_v\psi$ geometrically. Let $(\Lambda,\nu)$ be a GKP code in $V$ and let $(X,L)$ be the corresponding polarized complex abelian variety. Recall that identifying $H^0(X,L)$ with the space of theta functions $\Theta_{\Lambda,\nu}$ and then applying the Bargmann transform gives an embedding
\[
\iota_0 : H^0(X,L) \longhook \mathcal{S}'(\R^n).
\]
More generally, for each $v \in V$, we have an identification
\[
H^0(X,t_{\bar v}^*L) \cong \Theta_{\Lambda,\nu e^{2\pi iE(v,\cdot)}}
\]
by \cite[Lemma 2.3.2]{BirkenhakeLange2004ComplexAbelianVarieties}, and hence an embedding
\[
\iota_v : H^0(X,t_{\bar v}^*L) \longhook \mathcal{S}'(\R^n);
\]
see the proof of Proposition \ref{oa1igqfi} for an explicit description.

Now, the spaces $H^0(X,L)$ and $H^0(X,t_{\bar v}^*L)$ are also naturally related by the unitary isomorphism
\begin{equation}\label{vtah97po}
t_{\bar v}^* : H^0(X,L) \too H^0(X,t_{\bar v}^*L), \quad
s \mtoo t_{\bar v}^*s \coloneqq \hat{t}_{\bar v}^{-1}\circ s\circ t_{\bar v},
\end{equation}
where $\hat{t}_{\bar v} : t_{\bar v}^*L \to L$ is the canonical map covering $t_{\bar v}$.

\begin{proposition}\label{oa1igqfi}
For all $v \in V$, the diagram
\[
\begin{tikzcd}
H^0(X,L) \arrow{r}{t_{\bar v}^*} \arrow[hook]{d}{\iota_0} & H^0(X,t_{\bar v}^*L) \arrow[hook]{d}{\iota_v} \\
\mathcal{S}'(\R^n) \arrow{r}{T_v} & \mathcal{S}'(\R^n)
\end{tikzcd}
\]
commutes.
\end{proposition}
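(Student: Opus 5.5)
The plan is to make the embedding $\iota_v$ explicit in the Fock (Bargmann) picture, choosing it so that it matches \cite[Lemma 2.3.2]{BirkenhakeLange2004ComplexAbelianVarieties}. Once that is done, the commutativity of the diagram should reduce to the explicit formula \eqref{c03f536z} for $T_v^\B$. Throughout I identify $\S'(\R^n)$ with $\E(V)$ via the Bargmann transform \eqref{l5qky9yw}. Then $\iota_0$ becomes the composite $H^0(X,L)\cong\Theta_{\Lambda,\nu}\subset\E(V)$, where a section $s$ corresponds to the function $f$ with $s([z])=[z,f(z)]$.

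\emph{Step 1: the target space.} I first check that $T_v$ carries $\code_{\Lambda,\nu}$ onto $\code_{\Lambda,\nu'}$, where $\nu'\coloneqq\nu\,e^{2\pi iE(v,\cdot)}$. This follows from \eqref{onkixs2f}: if $T_\lambda\psi=\nu(\lambda)\psi$, then $T_\lambda T_v\psi=e^{-2\pi iE(\lambda,v)}T_vT_\lambda\psi=\nu(\lambda)e^{2\pi iE(v,\lambda)}T_v\psi$. The function $\nu'$ is again a semicharacter, since it differs from $\nu$ by a character. Hence $T_v^\B$ maps $\Theta_{\Lambda,\nu}$ onto $\Theta_{\Lambda,\nu'}$.

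\emph{Step 2: making $\iota_v$ explicit.} By construction $t_{\bar v}^*L=(V\times\C)/\Lambda$, with $\Lambda$ acting through the translated factor of automorphy $e_\lambda(z+v)=\nu(\lambda)e^{\pi(\frac12H(\lambda,\lambda)+H(z+v,\lambda))}$. Under the canonical map $\hat t_{\bar v}$, a section $s\leftrightarrow f$ of $L$ pulls back to the section $t_{\bar v}^*s\leftrightarrow g$ with $g(z)=f(z+v)$. Set
\[
h_v(z)\coloneqq e^{-\pi(\frac12H(v,v)+H(z,v))}.
\]
Then
\[
\frac{h_v(z+\lambda)}{h_v(z)}\,e_\lambda(z+v)=\nu(\lambda)e^{\pi(\frac12H(\lambda,\lambda)+H(z,\lambda))}e^{\pi(H(v,\lambda)-H(\lambda,v))}.
\]
Moreover $e^{\pi(H(v,\lambda)-H(\lambda,v))}=e^{2\pi i\operatorname{Im}H(v,\lambda)}=e^{2\pi iE(v,\lambda)}$. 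So multiplication by $h_v$ is an isomorphism from functions with the translated automorphy to $\Theta_{\Lambda,\nu'}$. Equivalently, it is the isomorphism $t_{\bar v}^*L\cong L(H,\nu')$ of Lemma 2.3.2. I would define
\[
\iota_v(t)\coloneqq\B^{-1}(h_v\cdot g)\quad\text{for } t\leftrightarrow g,
\]
and I would also check that $h_v\cdot g$ has Gaussian-polynomial growth, which holds because $|h_v(z)|\,e^{\pi|z+v|^2/2}=e^{\pi|z|^2/2}$.

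\emph{Step 3: commutativity.} For $s\leftrightarrow f$, we get $\iota_v(t_{\bar v}^*s)=\B^{-1}\big(h_v(z)f(z+v)\big)=\B^{-1}T_v^\B f=T_v\iota_0(s)$ by \eqref{c03f536z}. This gives the commutativity.

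The main obstacle is not this computation. It is pinning down the normalization of the identification in \cite[Lemma 2.3.2]{BirkenhakeLange2004ComplexAbelianVarieties}, which is only canonical up to a nonzero constant; its conventions, $t_{\bar v}$ versus $t_{-\bar v}$ and $H(z,v)$ versus $H(v,z)$, must be matched with \eqref{y5vauty8}. I would therefore take the formula with $h_v$ as the definition of $\iota_v$, and verify that it agrees with that lemma up to this harmless constant. Since the constant is fixed by the choice above, the diagram then commutes exactly. A useful consistency check is that $\iota_v$ is unitary for the inner products \eqref{cu7nnb56}. This holds because $|h_v(z)|^2e^{-\pi|z|^2}=e^{-\pi|z+v|^2}$, and because \eqref{vtah97po} is unitary.
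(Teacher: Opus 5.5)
Your proposal is correct and follows essentially the same route as the paper. The paper also fixes $\iota_v$ by an explicit identification $H^0(X,t_{\bar v}^*L)\cong\Theta_{\Lambda,\nu e^{2\pi iE(v,\cdot)}}$ (equation \eqref{3orscsx1}, which amounts to multiplication by your $h_v$), computes $(t_{\bar v}^*s)([u])=([u],[u+v,f(u+v)])$, and recognizes $h_v(u)f(u+v)$ as $T_v^\B f$ via \eqref{c03f536z}; your unitarity check matches the paper's remark that the $\frac{\pi}{2}H(v,v)$ term is included to make this identification unitary.
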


\begin{proof}
Recall that $L=L(H,\nu)$ is given by the quotient \eqref{zcsp4rse}. Thus we have an identification $H^0(X,L)\cong \Theta_{\Lambda,\nu}$ which sends a theta function $f\in \Theta_{\Lambda,\nu}$ to the section
\[
s : X \too L, \quad s([u])=[u,f(u)].
\]

Similarly, we identify $H^0(X,t_{\bar v}^*L)$ with $\Theta_{\Lambda,\nu e^{2\pi iE(v,\cdot)}}$ as follows. Using
\[
t_{\bar v}^*L = \{([u],[u+v,z]) \in X\times L : u \in V,\ z \in \C\},
\]
we associate to $f\in \Theta_{\Lambda,\nu e^{2\pi iE(v,\cdot)}}$ the section
\begin{equation}\label{3orscsx1}
s : X \too t_{\bar v}^*L, \quad
s([u]) = ([u],[u+v,e^{\pi(\frac{H(v,v)}{2}+H(u,v))}f(u)]).
\end{equation}
The term $\frac{\pi}{2}H(v,v)$ in the exponential is not needed for the automorphy rule, but it makes the identification unitary with respect to the natural hermitian metric on $H^0(X,t_{\bar v}^*L)$ induced by that of $L$ and the natural inner product on theta functions \eqref{cu7nnb56}.

Let $s\in H^0(X,L)$ and let $f\in \Theta_{\Lambda,\nu}$ be the corresponding theta function. Then
\[
(t_{\bar v}^*s)([u]) = ([u],[u+v,f(u+v)]).
\]
Therefore, if $g\in \Theta_{\Lambda,\nu e^{2\pi iE(v,\cdot)}}$ is the theta function corresponding to $t_{\bar v}^*s$, then
\[
e^{\pi(\frac{H(v,v)}{2}+H(u,v))}g(u)=f(u+v)
\]
for all $u\in V$. Hence the map \eqref{vtah97po} corresponds to
\[
D_v : \Theta_{\Lambda,\nu} \too \Theta_{\Lambda,\nu e^{2\pi iE(v,\cdot)}}, \quad
(D_vf)(u)=e^{-\pi(\frac{H(v,v)}{2}+H(u,v))}f(u+v).
\]
By \eqref{c03f536z}, this is exactly the action of $T_v$ under the Bargmann transform. This proves the commutativity of the diagram.
\end{proof}

For a point $x \in X$, the identification
\[
H^0(X,t_x^*L) \cong \Theta_{\Lambda,\nu e^{2\pi iE(v,\cdot)}}
\]
depends on a choice of lift $v \in V$ of $x$, but changing the lift only changes the resulting identification by a global phase; see \eqref{3orscsx1}. Since global phases do not affect quantum states, the displacement operator $T_v : \im \iota_0 \to \im \iota_v$ is geometrically equivalent to the pullback
\[
t_x^* : H^0(X,L) \too H^0(X,t_x^*L),
\]
independently of the choice of lift $v$.

It follows that, under the identification between holomorphic sections of $L$ and the code space $\code_{\Lambda,\nu}\subset \mathcal{S}'(\R^n)$, a displacement error
\[
\code_{\Lambda,\nu} \ni \psi \mtoo T_v\psi
\]
with $v \in V$ sampled from $\rho$ corresponds geometrically to the pullback map
\[
t_x^* : H^0(X,L) \too H^0(X,t_x^*L),
\]
where $x=[v]\in X$ is sampled from $\pi_*\rho$.

Given $s\in H^0(X,L)$ and an unknown displacement $x\in X$, the syndrome of the error state corresponding to $t_x^*s$ is the image of $x$ in $V/\Lambda^\perp$. Under the identification
\[
V/\Lambda^\perp \cong \widehat{X}=\mathrm{Pic}^0(X),
\]
the quotient map $X=V/\Lambda \to V/\Lambda^\perp$ corresponds to the polarization isogeny
\[
\phi_L : X \too \widehat{X}, \qquad x \mtoo t_x^*L\otimes L^{-1}.
\]
Thus the syndrome of $t_x^*s$ is the point
\[
\phi_L(x)\in \widehat{X}.
\]

Equivalently, the translated line bundle $t_x^*L$ can be written as
\[
t_x^*L \cong \phi_L(x)\otimes L,
\]
where $\phi_L(x)$ is viewed as a degree-zero line bundle on $X$. Thus the possible error spaces are parametrized by $\widehat{X}$ through the vector bundle
\[
\mathscr{M}_L : \bigsqcup_{y \in \widehat{X}} H^0(X,y\otimes L) \too \widehat{X}.
\]
In these terms, the syndrome measurement is precisely the natural projection $\mathscr{M}_L$.

The decoding theory developed above can therefore be expressed intrinsically as follows. Let $\rho$ be a Borel probability measure on $X$. A decoder is a measurable section
\[
\mathcal{D} : \widehat{X} \too X
\]
of the polarization isogeny $\phi_L : X \to \widehat{X}$, and its success probability is $\rho(\im\mathcal{D})$. Thus the robustness is
\[
\mathscr{R}_\rho(X,L)
=
\sup_{\mathcal{D}} \rho(\im\mathcal{D}),
\]
where the supremum is taken over all measurable sections of $\phi_L$. This agrees with Definition \ref{aszc1bm9} when $\rho$ is the pushforward to $X=V/\Lambda$ of a probability measure on $V$.

This formulation shows in particular that $\mathscr{R}_\rho(X,L)$ depends on $L$ only through the polarization isogeny $\phi_L$. Hence it depends only on $X$ and the polarization class $c_1(L)$, and not on the specific representative $L$ of that class.

We now specialize to Gaussian displacement noise. Let $\pi : V \to X$ be the universal cover, equipped with the hermitian metric induced by the polarization. For $\sigma>0$, let $\widetilde{\rho}_{\sigma,(X,L)}$ be the centered Gaussian measure of variance $\sigma^2$ on $V$ with respect to this metric, and set
\[
\rho_{\sigma,(X,L)}
\coloneqq
\pi_*\widetilde{\rho}_{\sigma,(X,L)}.
\]
This construction is compatible with isomorphisms of polarized abelian varieties. Indeed, if
\[
f : (X,L) \too (Y,M)
\]
is an isomorphism of polarized abelian varieties and $\widetilde{f} : V_X \to V_Y$ is a lift to the universal covers, then $\widetilde{f}$ preserves the corresponding hermitian metrics. Hence
\[
(\widetilde{f})_*\widetilde{\rho}_{\sigma,(X,L)}
=
\widetilde{\rho}_{\sigma,(Y,M)},
\]
and therefore
\[
f_*\rho_{\sigma,(X,L)}
=
\rho_{\sigma,(Y,M)}.
\]
It follows that, for every polarization type $D$ and every $\sigma>0$, Gaussian displacement noise defines a robustness function
\[
\mathscr{R}_\sigma : \mathcal{A}_D \too [0,1],
\qquad
[(X,L)] \mtoo \mathscr{R}_{\rho_{\sigma,(X,L)}}(X,L).
\]
Thus Problem \ref{86b8npxy} asks for the optimal value of this function on $\mathcal{A}_D$ and for the geometry of the locus where this value is attained.

Finally, the lattice invariant appearing in Theorem \ref{clp62a30} has the following geometric interpretation. Under the identification
\[
K(L)=\ker\phi_L \cong \Lambda^\perp/\Lambda,
\]
we have
\[
\ell(\Lambda)
=
\min_{0\ne x\in K(L)} d_X(0,x),
\]
where $d_X$ is the distance induced by the flat metric on $X$. Thus $\ell(\Lambda)$ is the length of the shortest geodesic segment from the origin to a nonzero point of the polarization kernel. Similarly, $N(\Lambda)$ is the number of points of $K(L)$ at this minimal distance. In this sense, the small-noise asymptotic is governed by a relative systolic invariant of the polarized abelian variety. Hence Problem \ref{j66sypn9} is the leading-order version of Problem \ref{86b8npxy} in the small-variance limit.

\section{Concatenation as isogenies}\label{cu43tyv7}

We finish by discussing concatenation. In quantum error correction, concatenation means taking a code space and imposing additional stabilizer conditions inside it. For GKP codes, this operation has a simple lattice-theoretic meaning: one passes from the original lattice $\Lambda$ to an intermediate symplectically integral lattice between $\Lambda$ and $\Lambda^\perp$. Geometrically, this is the same as passing to an isogenous abelian variety.

We first recall the finite-dimensional stabilizer formalism in the notation of Section \ref{fospnkpi}. Let $D=\diag(d_1,\ldots,d_n)$, with $d_i\ge 2$, and consider the qudit system
\[
\C^{\otimes D}=\C^{d_1}\otimes\cdots\otimes\C^{d_n}
\]
with Pauli group $\Pauli_D\subset \U(\C^{\otimes D})$.

A \defn{stabilizer group} is a subgroup $S\subset \Pauli_D$ whose simultaneous $+1$ eigenspace
\[
(\C^{\otimes D})_S
\coloneqq
\{\psi \in \C^{\otimes D} : s\psi=\psi \text{ for all } s\in S\}
\]
is nonzero. This eigenspace is called the \defn{stabilizer code} associated with $S$. By \eqref{af95ua00}, stabilizer groups are abelian.

The same definition applies in the basis-independent setting of Section \ref{hcdib98d}. Let $\H$ be a Heisenberg group over a finite symplectic abelian group $K$, let $W$ be its Stone--von Neumann representation, and let $\P\subset\H$ be the corresponding abstract Pauli group. A stabilizer group is a subgroup $S\subset \P$ such that
\[
W_S
\coloneqq
\{\psi\in W : s\psi=\psi \text{ for all } s\in S\}
\]
is nonzero.

In particular, if $(\Lambda,\nu)$ is a GKP code, then we may apply this definition to the Pauli group $\Pauli_{\Lambda,\nu}$ acting on the code space $\code_{\Lambda,\nu}$. Given a stabilizer group $S\subset \Pauli_{\Lambda,\nu}$, the \defn{concatenation} of $(\Lambda,\nu)$ with $S$ is the subspace
\[
(\code_{\Lambda,\nu})_S
\coloneqq
\{\psi\in \code_{\Lambda,\nu} : s\psi=\psi \text{ for all } s\in S\}
\subset \code_{\Lambda,\nu}.
\]
The next result shows that this concatenated code space is itself the code space of a GKP code.

\begin{proposition}\label{ivce7fh0}
For every GKP code $(\Lambda,\nu)$ and every stabilizer group $S\subset \Pauli_{\Lambda,\nu}$, there is a canonical GKP code $(\Lambda_S,\nu_S)$ such that
\[
(\code_{\Lambda,\nu})_S=\code_{\Lambda_S,\nu_S}.
\]
Explicitly,
\[
\Lambda_S \coloneqq \{\mu \in \Lambda^\perp : \mu+\Lambda \in \pi(S)\},
\]
where $\pi : \Pauli_{\Lambda,\nu}\to \Lambda^\perp/\Lambda$ is the canonical projection. The semicharacter $\nu_S$ is uniquely determined by the condition that
\[
[\mu,\nu_S(\mu)^{-1}]\in S
\]
for all $\mu\in\Lambda_S$.
\end{proposition}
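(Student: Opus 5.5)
The plan is to lift the finite stabilizer group $S \subset \Pauli_{\Lambda,\nu} \subset \Heis_{\Lambda,\nu} = (\Lambda^\perp \times \U(1))/\Lambda_\nu$ to a subgroup of $\HH(V)$. The key steps, in order, are the following.

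First, let $\widetilde{S} \subset \Lambda^\perp \times \U(1) \subset \HH(V)$ be the full preimage of $S$ under the quotient map. Then $\widetilde{S}$ contains $\Lambda_\nu$, its image in $V$ is exactly $\Lambda_S$, and $\Lambda \subset \Lambda_S \subset \Lambda^\perp$. Hence $\Lambda_S$ is a lattice of rank $2n$.

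Next I need to show that the map $\widetilde{S} \to \Lambda_S$ is injective, i.e.\ that $\widetilde{S} \cap \U(1) = \{1\}$. This is where nontriviality of $W_S$ enters. If $(0,\alpha) \in \widetilde{S}$, then the scalar $\alpha \in S$ acts as $\alpha$ on $(\code_{\Lambda,\nu})_S \neq 0$, which forces $\alpha = 1$. Consequently, for each $\mu \in \Lambda_S$ there is a unique $\alpha_\mu$ with $[\mu,\alpha_\mu] \in S$, and I set $\nu_S(\mu) \coloneqq \alpha_\mu^{-1}$. This also settles uniqueness of $\nu_S$. For $\mu \in \Lambda$, the element $[\mu,\nu(\mu)^{-1}]$ is trivial, so it lies in $S$; hence $\nu_S|_\Lambda = \nu$.

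Then I check the required properties of $\Lambda_S$ and $\nu_S$. Since $S$ is abelian and injects into $\Lambda_S$ modulo scalars, the commutator relation in $\HH(V)$ gives $e^{-2\pi i E(\mu,\mu')} = 1$ for $\mu,\mu' \in \Lambda_S$. (Abelianness itself follows from nontriviality of $W_S$: commutators are scalars acting as $1$ on a nonzero vector.) So $E(\Lambda_S,\Lambda_S) \subset \Z$, and $\Lambda_S$ is symplectically integral. Next, $\mu \mapsto (\mu,\nu_S(\mu)^{-1})$ is the inverse of the group isomorphism $\widetilde{S} \to \Lambda_S$, hence a homomorphism into $\HH(V)$. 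Comparing with the multiplication law $(\mu,\alpha)(\mu',\beta) = (\mu+\mu', \alpha\beta e^{-i\pi E(\mu,\mu')})$ yields exactly the semicharacter identity for $\nu_S$. In other words, $\widetilde{S} = (\Lambda_S)_{\nu_S}$. Canonicity is clear, since no choices were made.

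Finally I compare the two code spaces. A vector $\psi \in \S'(\R^n)$ lies in $\code_{\Lambda_S,\nu_S}$ iff it is fixed by the Schr\"odinger action of $(\Lambda_S)_{\nu_S} = \widetilde{S}$. Because $\widetilde{S} \supset \Lambda_\nu$, this holds iff $\psi \in \code_{\Lambda,\nu}$ and $\psi$ is fixed by the induced action of $\widetilde{S}/\Lambda_\nu = S$ on $\code_{\Lambda,\nu}$. That is precisely the condition $\psi \in (\code_{\Lambda,\nu})_S$.

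I expect the main obstacle to be the injectivity step, i.e.\ the claim $\widetilde{S} \cap \U(1) = 1$. It is the only place the hypothesis $W_S \ne 0$ is used, and it simultaneously makes $\nu_S$ well defined and makes $\Lambda_S$ isotropic. The rest is bookkeeping with the multiplication law of $\HH(V)$.
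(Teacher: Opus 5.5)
Your proposal is correct and follows essentially the same route as the paper. Nonvanishing of the $S$-fixed space rules out nontrivial scalars in $S$, which makes $\nu_S$ well defined and unique; abelianness of $S$ gives symplectic integrality of $\Lambda_S$; the group structure gives the semicharacter identity; and the code spaces are compared via the fixed-point conditions. Working with the full preimage $\widetilde{S} = (\Lambda_S)_{\nu_S} \subset \HH(V)$ just makes the paper's last two steps, the semicharacter relation and the equality of code spaces, more explicit.
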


\begin{proof}
Since $S$ is abelian, the image $\pi(S)\subset \Lambda^\perp/\Lambda$ is isotropic with respect to the symplectic form on $\Lambda^\perp/\Lambda$. Thus
\[
E(\mu,\kappa)\in\Z
\]
whenever $\mu+\Lambda$ and $\kappa+\Lambda$ lie in $\pi(S)$. It follows that $\Lambda_S$ is a symplectically integral lattice.

Let $\mu\in\Lambda_S$. By definition, there exists $s\in S$ such that $\pi(s)=\mu+\Lambda$. Writing $s=[\kappa,\alpha]$ with $\kappa\in\Lambda^\perp$ and $\alpha\in\U(1)$, we have $\kappa+\Lambda=\mu+\Lambda$. Replacing $\kappa$ by $\mu$ in the same class, we may therefore write $s=[\mu,\alpha]$.

We claim that such an $\alpha$ is unique. Indeed, if $[\mu,\alpha]$ and $[\mu,\beta]$ both lie in $S$, then $[0,\alpha\beta^{-1}]\in S$. Since $(\code_{\Lambda,\nu})_S$ is nonzero, this scalar must act trivially on a nonzero vector, and hence $\alpha=\beta$. This defines a map
\[
\nu_S : \Lambda_S \too \U(1)
\]
by the condition that $[\mu,\nu_S(\mu)^{-1}]\in S$.

Since $S$ is a subgroup, the map $\nu_S$ satisfies the semicharacter relation for the lattice $\Lambda_S$. Thus $(\Lambda_S,\nu_S)$ is a GKP code. Finally, a vector $\psi\in \code_{\Lambda,\nu}$ is fixed by every element of $S$ if and only if it satisfies the stabilizer equations defining $\code_{\Lambda_S,\nu_S}$. Hence $(\code_{\Lambda,\nu})_S=\code_{\Lambda_S,\nu_S}$.
\end{proof}

Let $(\Lambda,\nu)$ be a GKP code and let $S\subset \Pauli_{\Lambda,\nu}$ be a stabilizer group. By Proposition \ref{ivce7fh0}, the concatenation of $(\Lambda,\nu)$ with $S$ is again a GKP code $(\Lambda_S,\nu_S)$, with
\[
\Lambda\subset \Lambda_S\subset \Lambda^\perp
\]
and
\[
\nu_S|_\Lambda=\nu.
\]
Thus concatenation amounts to enlarging the stabilizer lattice, together with the compatible extension of the semicharacter.

Conversely, suppose that $(\widetilde{\Lambda},\widetilde{\nu})$ is a GKP code such that
\[
\Lambda\subset\widetilde{\Lambda}
\quad\text{and}\quad
\widetilde{\nu}|_\Lambda=\nu.
\]
Since $\widetilde{\Lambda}$ is symplectically integral and contains $\Lambda$, we have
\[
\widetilde{\Lambda}\subset \Lambda^\perp.
\]
Let $m=\exp(\Lambda^\perp/\Lambda)$. For every $\mu\in\widetilde{\Lambda}$, we have $m\mu\in\Lambda$, and hence
\[
\widetilde{\nu}(\mu)^{2m}
=
\widetilde{\nu}(m\mu)^2
=
\nu(m\mu)^2.
\]
By Lemma \ref{g8ok30mq}, this implies that
\[
S_{\widetilde{\Lambda}}
\coloneqq
\{[\mu,\widetilde{\nu}(\mu)^{-1}]\in \Pauli_{\Lambda,\nu} : \mu\in\widetilde{\Lambda}\}
\]
is a subgroup of $\Pauli_{\Lambda,\nu}$. It is abelian because $\widetilde{\Lambda}$ is symplectically integral. Moreover, its simultaneous $+1$ eigenspace is precisely $\code_{\widetilde{\Lambda},\widetilde{\nu}}$, so it is a stabilizer group, and Proposition \ref{ivce7fh0} recovers
\[
\widetilde{\Lambda}=\Lambda_{S_{\widetilde{\Lambda}}}
\quad\text{and}\quad
\widetilde{\nu}=\nu_{S_{\widetilde{\Lambda}}}.
\]

Therefore, concatenating a GKP code $(\Lambda,\nu)$ with a stabilizer code is equivalent to passing to a GKP code $(\widetilde{\Lambda},\widetilde{\nu})$ extending $(\Lambda,\nu)$, in the sense that
\[
\Lambda\subset\widetilde{\Lambda}\subset\Lambda^\perp
\quad\text{and}\quad
\widetilde{\nu}|_\Lambda=\nu.
\]

We now translate this statement into the language of abelian varieties. Let
\[
X=V/\Lambda
\quad\text{and}\quad
\widetilde{X}=V/\widetilde{\Lambda}.
\]
The inclusion $\Lambda\subset\widetilde{\Lambda}$ induces a holomorphic homomorphism
\[
f : X=V/\Lambda \too \widetilde{X}=V/\widetilde{\Lambda}
\]
with finite kernel $\widetilde{\Lambda}/\Lambda$. Thus $f$ is an isogeny. Moreover, if
\[
L=L(H,\nu)
\quad\text{and}\quad
\widetilde{L}=L(H,\widetilde{\nu}),
\]
then the condition $\widetilde{\nu}|_\Lambda=\nu$ gives
\[
f^*\widetilde{L}=L.
\]
Thus concatenation corresponds geometrically to passing from $(X,L)$ to an isogenous polarized abelian variety $(\widetilde{X},\widetilde{L})$.

Conversely, let $(X,L)$ and $(Y,M)$ be polarized complex abelian varieties and let
\[
f : X \too Y
\]
be an isogeny such that $f^*M=L$. After identifying their universal covers, we may write
\[
X=V/\Lambda
\quad\text{and}\quad
Y=V/\widetilde{\Lambda}
\]
with $\Lambda\subset\widetilde{\Lambda}$. Writing $L=L(H,\nu)$ and $M=L(H,\widetilde{\nu})$, the equality $f^*M=L$ gives
\[
\widetilde{\nu}|_\Lambda=\nu.
\]
Thus the isogeny determines an extension of GKP codes, and hence a concatenation by a stabilizer code.

We conclude that, in the GKP setting, concatenation with a stabilizer code is exactly the same operation as passing to a suitable isogeny of polarized complex abelian varieties.

\bibliographystyle{plain}
\bibliography{complex-abelian-varieties-and-qec}

\end{document}